\newtheorem{theorem}{Theorem}[section]
\newtheorem{lemma}[theorem]{Lemma}
\newtheorem{question}[theorem]{Question}
\newtheorem{example}[theorem]{Example}
\newtheorem{corollary}[theorem]{Corollary}
\newtheorem{proposition}[theorem]{Proposition}
\newtheorem{remark}[theorem]{Remark}
\newtheorem{definition}[theorem]{Definition}
\newcommand{\conv}[1]{\xrightarrow{\,#1\,}}
\newcommand{\innerthmname}{}% initialize
\theoremstyle{definition}
\def\namedlabel#1#2{\begingroup
	#2%
	\def\@currentlabel{#2}%
	\phantomsection\label{#1}\endgroup
}
\newcommand{\Sp}{\mathbb{S}}
\newcommand{\ep}{\varepsilon}
\newcommand{\s}{\hspace{7pt}}
\newcommand{\lp}{\langle}
\newcommand{\rp}{\rangle}
\newcommand{\R}{\mathbb{R}}
\newcommand{\B}{\mathcal{B}}
\newcommand{\J}{\mathcal{J}}
\newcommand{\vsp}{\vspace{2.5pt}}
\DeclareMathOperator{\supp}{supp}
\DeclareMathOperator{\Ric}{Ric}
\title[Asymptotics as $s \to 0^+$ of the fractional perimeter on Riemannian manifolds.]{Asymptotics as $s \to 0^+$ of the fractional perimeter on Riemannian manifolds}
\author[M. Caselli]{Michele Caselli}
\author[L. Gennaioli]{Luca Gennaioli}
\address[M. Caselli]{
	%\newline\indent 
	Scuola Normale Superiore
	\newline\indent 
	Piazza dei Cavalieri 7, 56126 Pisa, Italy}
\email{\href{mailto:michele.caselli@sns.it}{michele.caselli@sns.it}}
\address[L. Gennaioli]{Scuola Internazionale Superiore di Studi Avanzati
	\newline\indent 
	Via Bonomea, 265, 34136 Trieste, Italy}
\email{\href{mailto:luca.gennaioli@sissa.it}{luca.gennaioli@sissa.it}}
\begin{document}
	
	\begin{abstract}
		In this work, we study the asymptotics of the fractional Laplacian as $s\to 0^+$ on any complete Riemannian manifold $(M,g)$, both of finite and infinite volume. Surprisingly enough, when $M$ is not stochastically complete, this asymptotics is related to the existence of bounded harmonic functions on $M$. 

  \vsp
  As a corollary, we can find the asymptotics of the fractional $s$-perimeter on (essentially) every complete manifold, generalizing both the existing results \cite{dpfv} for $\R^n$ and \cite{CaCiLaPa21a} for the Gaussian space. In doing so, from many sets $E\subset M$, we are able to produce a bounded harmonic function associated with $E$, which, in general, can be non-constant.

	\end{abstract}
	
	\maketitle
\small \textbf{MSC (2020):} 35K08, 31C05, 35R09, 35R11.
  \tableofcontents
	
	\section{Introduction}\label{section1}
This work deals with the fractional Laplacian on general complete Riemannian manifolds. Given a set $E\subset M$, our work is based on the study of the following quantity
 \begin{equation}\label{inttheta1}
         \theta_E(p):=\lim_{s\to 0^+} \int_{E \setminus B_R(p)} \mathcal{K}_s(x,p) d\mu(x)  \,, 
     \end{equation}
 where 
 \begin{equation}\label{singkerdef}
\mathcal{K}_s(x,y) :=  
 \frac{1}{|\Gamma(-s/2)|}\int_{0}^\infty H_M(x,y,t)\,\frac{dt}{t^{1+s/2}}
 \end{equation}
 and $H_M (x,y,t): M \times M \times (0,\infty)$ is the heat kernel of $M$, that is the minimal\footnote{Here, minimal means the following: if $v : (0, \infty) \times M \to \R $ is another function with $\partial_t v -\Delta_g v = 0 $ on $M$ and $ v(t,\cdot) \to \delta_{\{y\}}$ as $t \to 0^+$, then $H_M(\cdot,y,\cdot) \le v$. See Section 9.1 in \cite{GrygBook} for details on this property.}, positive fundamental solution to the heat equation $\partial_t u -\Delta_g u = 0 $ on $M$ with $ u(t,\cdot) \to \delta_{\{y\}}$ in the sense of distributions as $t\to 0^+$. The quantity analogous to \eqref{inttheta1} on $\R^n$ was previously studied in \cite{dpfv}, where the authors deal with the study of the fractional $s$-perimeter as $s\to 0^+$. In this case of $M=\R^n$, the limit in \eqref{inttheta1} does not depend on $p$ (whenever it exists); hence, $\theta_E$ is a constant function.

 \subsection{Main results}
 
 One of the main observations of this work is that $\theta_E$ is always an harmonic function on $M$, with values in $[0,1]$, and in general can be non-constant if $M$ does not satisfy the $L^\infty-{\rm Liouville}$ property (see Definition \ref{LinfLiouville}). Moreover, for $E \equiv M$, the function $\theta_M$ encodes the asymptotics of the fractional Laplacian as $s\to 0^+$ on every complete $(M,g)$. 

 \vsp
 The following are the main results of our work.

 \begin{theorem}\label{mainrandom0} Let $(M,g)$ be a complete Riemannian manifold with $\mu(M)=+\infty$, and let $E\subset M$ be a measurable set. Then 
 \begin{itemize}
     
     \item[(i)]  If for some $R>0$ and every $p \in M$, the following limit exists
         \begin{equation}\label{eq: theta limit def}
         \theta_E(p):= \lim_{s\to 0^+} \int_{E \setminus B_R(p)} \mathcal{K}_s(x,p) d\mu(x)  \in [0, 1] \,,
     \end{equation}
     then it is independent of the choice of $R$, and $\theta_E :M\to [0,1]$ is a bounded harmonic function on $M$.  

      \item[(ii)] For $R>0$ and $p \in M$ the limit 
         \begin{equation}\label{volinf M}
         \theta_M(p):=\lim_{s\to 0^+} \int_{M \setminus B_R(p)}  \mathcal{K}_s(x,p) d\mu(x) \in [0,1]
     \end{equation}
    always exists, does not depend on the choice of $R$, and equals
        \begin{equation}\label{argwer}
            \theta_M(p) =\lim_{t\to \infty } \int_M H_M(p,x,t) \, d\mu(x) \,.
        \end{equation}
    Moreover, $\theta_M :M\to [0,1]$ is a bounded harmonic function on $M$.  
 \end{itemize}
 \end{theorem}

 Unless otherwise stated, when we will say \textit{"assume $\theta_E$ exists"} we intend that the limit in \eqref{eq: theta limit def} exists for some $R>0$ and every $p\in M$. We refer to \autoref{sbs: theta existence at diff points} for a brief discussion on the existence/nonexistence of this limit for different points $p$.

\vsp 
Next is the asymptotics of the fractional Laplacian. Note that, on well-behaved ambient spaces, one would expect (as it happens on $\R^n$) that the fractional $(s/2)$-Laplacian tends to the identity as $s\to 0^+$. With the following result, we show that this is not true on general Riemannian manifolds and that the harmonic function $\theta_M$ defined in \eqref{volinf M} encodes how this limit differs from the identity. 
 
\begin{theorem}\label{mainrandom1}
     Let $(M,g)$ be a complete Riemannian manifold with $\mu(M)=+\infty$, and let $\theta_M$ be given by \eqref{volinf M}. Let also $s_\circ \in (0,2)$ and $u \in H^{s_\circ/2}(M)\cap L^\infty(M)$ with bounded support. Then, as $s\to 0^+$ there holds
      \begin{equation}\label{harmasymp}
           (-\Delta)_{\rm Si}^{s/2} u \conv{}  \theta_M u \s a.e. \,\, on \,\, M ,
     \end{equation}
     where $ (-\Delta)_{\rm Si}^{s/2}$ is the singular integral fractional Laplacian \eqref{flap}.
\end{theorem}

With this result, we also make an interesting observation regarding a Riemannian manifold constructed by Pinchover in \cite{PinYehu95}. This Riemannian manifold satisfies the $L^\infty-{\rm Liouville}$ property (see Definition \ref{LinfLiouville}), but it is not stochastically complete, and we show that it satisfies $\theta_M\equiv 0$. We describe the construction of this manifold in Example \ref{exampleshit}. Consequently, there exist complete Riemannian manifolds where the mass of the heat kernel escapes so rapidly that the asymptotics of the fractional Laplacian not only differs from the identity but becomes identically zero, even for regular functions.

\vsp
In the following result, we address the equivalence (actually, equality) of different definitions of the fractional Laplacian on stochastically complete manifolds. Moreover, we also find the asymptotics of the fractional Laplacian on manifolds with finite volume. 

 \begin{theorem}\label{mainrandom2} 
     Let $(M,g)$ be a stochastically complete Riemannian manifold. Let also $s_\circ\in (0,2)$ and $u \in H^{s_\circ/2 }(M)$ (see Definition \ref{fracsobdef}). Then, for all $s<s_\circ $ the three definitions of the fractional Laplacian \eqref{flap}, \eqref{boclap}, and \eqref{speclap} coincide a.e., that is 
     \begin{equation*}
         (-\Delta)^{s/2}_{\rm Si} u = (-\Delta)^{s/2}_{\rm B } u = (-\Delta)^{s/2}_{\rm Spec} u \,.
     \end{equation*}
    Moreover, as $s\to 0^+$
         \begin{align}\label{rand1}
         (-\Delta)^{s/2} u \conv{L^2} u - \frac{1}{\mu(M)} \int_M u \, d\mu  \s\s \textit{if} \,\,\, \mu(M)<+\infty \,,
         \end{align}
         and 
        \begin{align}\label{rand2}
          (-\Delta)^{s/2} u \conv{L^2} u  \s\s \textit{if} \,\,\, \mu(M)=+\infty \,,
         \end{align}
     where $(-\Delta)^{s/2}$ is any of the equivalent fractional Laplacians.
 \end{theorem}

 In proving the previous theorems, we also provide an equivalent characterization of being stochastically complete (see Definition \ref{def stoch comp}) in the case of infinite volume. 
\begin{proposition}\label{limexistence}
Let $(M,g)$ be a complete (possibly weighted) Riemannian manifold with $\mu(M)=+\infty$, and let $\theta_M(p)$ be given by \eqref{volinf M}. If $M$ is stochastically complete, then
\begin{equation}
\label{Stoccompl}
         \theta_M = \lim_{s \to 0^+} \int_{M \setminus B_1(p)} \mathcal{K}_s(x,p) d\mu(x)=1\quad\forall p\in M \,.
     \end{equation}
     Conversely, if there exists $p\in M$ such that 
     \begin{equation}
         \theta_M(p) = \lim_{s \to 0^+} \int_{M \setminus B_1(p)} \mathcal{K}_s(x,p) d\mu(x)=1 \,,
     \end{equation}
     then $M$ is stochastically complete.
\end{proposition}
We will prove this result at the beginning of \autoref{section4}. 
\begin{remark} 
   We believe that Theorem \ref{mainrandom0} could be used to count the dimension of the space of bounded harmonic function on $M$. Something in this direction has already been done by A. Grigor’yan in \cite{Grygbddharm}, where he proves that this dimension equals the maximum number of disjoint massive sets that can be put on $M$. We think that the sets $E$ for which \eqref{eq: theta limit def} is not (identically) zero or one are related to the notion of massiveness and could be used to prove a similar statement. We plan to explore this relationship in future work. 
\end{remark}
As a corollary of the results above we are able to obtain the asymptotics of the fractional perimeter as $s\to 0^+$ in an extremely general setting, generalizing both the existing results \cite{dpfv} for $\R^n$ and \cite{CaCiLaPa21a} for the Gaussian space. Although these outcomes currently stem from broader results obtained in our investigation, we emphasize that the initial motivation behind this research was to explore the asymptotic properties of the fractional perimeter on general Riemannian manifolds.

\vsp 
In particular, with Theorem \ref{mainnoncompact} and \ref{maincompact}, we show that these two known behaviors of the asymptotics, the one of $\R^n$ and the one of the Gaussian space, are essentially the only two possible also in this general setting.

 \begin{theorem}[Infinite volume asymptotics]\label{mainnoncompact}
     Let $(M,g)$ be a complete, stochastically complete Riemannian manifold with $\mu(M)=+\infty$ and such that the $L^\infty-{\rm Liouville}$ property holds (see Definition \ref{LinfLiouville}). Let $\Omega \subset M $ be an open, bounded, connected set with Lipschitz boundary. Let also $E\subset M$ be a measurable set with\footnote{We refer to Definition \ref{fper def loc} for the definition of the fractional perimeter $P_s(\cdot, \Omega)$.} $P_{s_\circ}(E, \Omega) <+\infty$, for some $s_\circ \in (0,1)$, and such that $\theta_E$ exists (see \eqref{eq: theta limit def}). Then
     \begin{itemize}
         \item[(i)] The limit $\lim_{s\to 0^+} \frac{1}{2} P_s(E, \Omega )$ exists and\footnote{Since $\theta_{M\setminus E}=1-\theta_E$ in this case.}
      \begin{align*}
        \lim_{s\to 0^+} \frac{1}{2} P_s(E, \Omega ) &= (1-\theta_E) \mu(E\cap \Omega) + \theta_E \mu(E^c \cap \Omega) \\ &=\theta_{M\setminus E}   \mu(E\cap \Omega) + \theta_E  \mu(E^c \cap \Omega) \,.
    \end{align*}
    \item[(ii)] Conversely, if $\mu(\Omega\cap E) \neq \mu(\Omega \setminus E)$ and the limit $\lim_{s\to 0^+} \frac{1}{2} P_s(E, \Omega )$ exists, then the limit in \eqref{eq: theta limit def} exists and there holds
    \begin{equation*}
        \theta_E=\frac{\lim_{s\to 0^+} \frac{1}{2} P_s(E, \Omega )-\mu(E\cap \Omega)}{\mu(\Omega \setminus E)-\mu(E\cap \Omega)} \,.
    \end{equation*}
    
    \item[(iii)] If $\mu(\Omega\cap E) = \mu(\Omega \setminus E)$ then the limit $\lim_{s\to 0^+} \frac{1}{2} P_s(E, \Omega )$ always exists and 
    \begin{equation*}
        \lim_{s\to 0^+} \frac{1}{2} P_s(E, \Omega ) = \mu(\Omega\cap E) = \mu(\Omega \setminus E) \,.
        \end{equation*}
     \end{itemize}
 \end{theorem} 

\begin{remark} Without the assumption of stochastic completeness of $M$ the situation can be different. We will describe in Example \ref{exampleshit} a complete Riemannian manifold $N$, with the $L^\infty-{\rm Liouville}$ property but not stochastically complete such that $\lim_{s\to 0^+}  P_s(E )=0$ for every subset $E\subset N$. 
    
\end{remark}

 \begin{theorem}[Finite volume asymptotics]\label{maincompact}
Let $(M,g)$ be a complete Riemannian manifold with $\mu(M)<+\infty$, and let $\Omega \subset M$ be an open and connected set with Lipschitz boundary. If for some set $E \subset M$ there exists $s_\circ \in (0,1)$ such that $P_{s_\circ}(E, \Omega) <+\infty $, then the limit $\lim_{s\to 0^+} \frac{1}{2} P_s(E, \Omega )$ exists and 
\begin{equation*}
    \lim_{s\to 0^+} \frac{1}{2} P_s(E, \Omega ) = \frac{1}{\mu(M)}\Big( \mu(E )\mu(E^c \cap \Omega) + \mu(E\cap \Omega)\mu(E^c \cap \Omega^c)  \Big).
\end{equation*}
 \end{theorem}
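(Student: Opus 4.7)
The plan is to reduce the theorem to a single statement about pairwise interactions and then to analyze the short-time and long-time regimes of the kernel $\mathcal{K}_s$ separately. Writing $\tfrac12 P_s(E,\Omega)$ as the three-fold sum appearing in its definition, it suffices to prove that for any disjoint measurable $A,B\subset M$ with $\mathcal{J}_{s_\circ}(A,B)<+\infty$ one has
\begin{equation*}
\lim_{s\to 0^+}\mathcal{J}_s(A,B)=\frac{\mu(A)\mu(B)}{\mu(M)}.
\end{equation*}
Applied to the three pairs $(E\cap\Omega,E^c\cap\Omega)$, $(E\cap\Omega,E^c\cap\Omega^c)$, $(E\cap\Omega^c,E^c\cap\Omega)$ and summed, the elementary identity $\mu(E)=\mu(E\cap\Omega)+\mu(E\cap\Omega^c)$ reproduces the right-hand side of the theorem.

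For disjoint $A,B$, Fubini--Tonelli (the integrand is nonnegative) swaps the spatial and temporal integrations, giving
\begin{equation*}
\mathcal{J}_s(A,B)=\frac{1}{|\Gamma(-s/2)|}\int_0^{\infty}G(t)\,\frac{dt}{t^{1+s/2}},\qquad G(t):=\iint_{A\times B}H_M(x,y,t)\,d\mu(x)d\mu(y)=\langle e^{t\Delta}\chi_A,\chi_B\rangle_{L^2(M)},
\end{equation*}
and I split this integral at $t=1$. On $(0,1)$ the monotonicity $t^{-1-s/2}\le t^{-1-s_\circ/2}$ together with the hypothesis $\mathcal{J}_{s_\circ}(A,B)<+\infty$ yields the $s$-uniform bound $\int_0^1 G(t)\,t^{-1-s/2}\,dt\le |\Gamma(-s_\circ/2)|\,\mathcal{J}_{s_\circ}(A,B)$; multiplied by the prefactor $1/|\Gamma(-s/2)|\sim s/2\to 0$, the short-time contribution is asymptotically negligible.

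The main content lies in the long-time tail, where I invoke $L^2$-ergodicity of the heat semigroup. On any connected complete Riemannian manifold, a standard Caccioppoli-type argument with Lipschitz cut-offs supported in the geodesic balls $B_R$ and gradient bounded by $2/R$ shows that every $L^2$ harmonic function is constant; since $\mu(M)<+\infty$, the constant $1$ belongs to $L^2(M)$, so the kernel of $-\Delta$ as a self-adjoint operator on $L^2(M)$ is exactly the line of constants. The spectral theorem then yields the strong $L^2$-convergence $e^{t\Delta}f\to\mu(M)^{-1}\int_M f\,d\mu$ as $t\to+\infty$, and applied to $f=\chi_A$ paired against $\chi_B$ this gives $G(t)\to L:=\mu(A)\mu(B)/\mu(M)$. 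Since $0\le G(t)\le \mu(M)$, a Karamata-type Tauberian estimate
\begin{equation*}
\frac{1}{|\Gamma(-s/2)|}\int_1^{\infty}G(t)\,\frac{dt}{t^{1+s/2}}=\frac{L}{\Gamma(1-s/2)}+\frac{1}{|\Gamma(-s/2)|}\int_1^{\infty}\!\bigl(G(t)-L\bigr)\,\frac{dt}{t^{1+s/2}}\xrightarrow[s\to 0^+]{}L
\end{equation*}
then follows by splitting the remainder integral at a threshold $T=T(\varepsilon)$ with $|G(t)-L|<\varepsilon$ for $t>T$, and exploiting $(\ln T)/|\Gamma(-s/2)|\to 0$ and $T^{-s/2}/\Gamma(1-s/2)\to 1$.

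The main obstacle is the $L^2$-ergodicity step, which rests on the fact that $\ker(-\Delta_{L^2})=\mathbb{R}$ on any connected complete finite-volume manifold; once this is in place, the rest is routine Tauberian analysis based on the asymptotic $|\Gamma(-s/2)|\sim 2/s$ near $s=0$, and the algebraic bookkeeping that converts the three interaction limits into the stated localized volume functional is entirely elementary.
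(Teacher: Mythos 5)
Your proof is correct, and the overall strategy coincides with the paper's: reduce to the single interaction statement $\lim_{s\to 0^+}\mathcal{J}_s(A,B)=\mu(A)\mu(B)/\mu(M)$, discard the short-time part of the $dt$ integral using the hypothesis $\mathcal{J}_{s_\circ}(A,B)<+\infty$, and identify the limit from the long-time behaviour of the heat semigroup, which is controlled by Yau's $L^2$-Liouville theorem (the paper's Theorem \ref{dfsdfs}) together with spectral theory. The one genuine organizational difference is in the long-time analysis: the paper splits the $t$-integral at both $1$ and $1/s$, shows the middle piece $(1,1/s)$ is $O(1-s^{s/2})$ using stochastic completeness, and then identifies the limit of the far tail $(1/s,\infty)$ by a change of variables and the pointwise convergence $H_M(x,y,t)\to 1/\mu(M)$ from Lemma \ref{gryg}; you instead pass to the scalar function $G(t)=\langle e^{t\Delta}\chi_A,\chi_B\rangle$, invoke its limit $L=\mu(A)\mu(B)/\mu(M)$ (via strong $L^2$ ergodicity rather than the pointwise kernel convergence), and apply a Karamata-type Tauberian lemma to $\int_1^\infty G(t)\,t^{-1-s/2}dt$. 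Both routes are rigorous and of comparable length; your reduction to the one-dimensional function $G(t)$ sidesteps the (mild) issue of interchanging the $s\to 0^+$ limit with the double space integral that the paper's phrasing glosses over, and it isolates the Tauberian mechanism more transparently, at the cost of not producing the explicit intermediate asymptotic \eqref{asd1} that the paper later reuses in Lemma \ref{kkk} for the infinite-volume case.
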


\subsection{The fractional perimeter on Riemannian manifolds}
It was recently pointed out in \cite{CFS23} a canonical definition of the fractional $s$-perimeter on every closed Riemannian manifold $(M,g)$: this boils down to giving a canonical definition of the fractional Sobolev seminorm $H^{s/2}(M)$ for $s\in (0,1)$. Consider a closed (even though we will deal with general complete ones), connected Riemannian manifold $(M,g)$ with $n\ge2$. In \cite{CFS23} the authors show that a canonical definition of the fractional Sobolev seminorm $H^{s/2}(M)$ can be given in at least four equivalent (up to absolute constants) ways:

 \begin{itemize}
     \item[\it (i)] By the {\em singular integral}  
     \begin{equation}\label{aaa}
 [u]^2_{H^{s/2}(M)} := \iint_{M\times M}(u(x)-u(y))^2 \mathcal{K}_s(x,y) \,d\mu(x) \,d \mu(y) \,,
 \end{equation}
where $\mathcal{K}_s(x,y)$ is given by \eqref{singkerdef}.
     \item[\it (ii)] Following the {\em Bochner definition} of the fractional Laplacian
     \begin{equation}\label{boclap}
   (-\Delta)^{s/2}_{\rm B} \, u = \frac{1}{\Gamma(-s/2)} \int_{0}^{\infty} (e^{t\Delta}u-u)\frac{dt}{t^{1+s/2}}\, ,
\end{equation}
via 
\begin{equation*}
    [u]^2_{H^{s/2}(M)}  = 2 \int_{M} u (-\Delta)^{s/2}_{\rm B} u \, d\mu  \,.
\end{equation*}
     \item[\it (iii)] By  {\em spectral theory}, one can set
\begin{equation}\label{ghfghfg}
    [u]^2_{H^{s/2}(M)} = \sum_{k\ge 1} \lambda_k^{s/2} \langle u,\phi_k \rangle^2_{L^2(M)}
\end{equation}
 where $\{\phi_k\}_k$ is an orthonormal basis of eigenfunctions of the Laplace-Beltrami operator $(-\Delta_g)$ and $\{\lambda_k\}_k$ are the corresponding eigenvalues. Note that for $s=2$ this gives the usual $[u]^2_{H^{1}(M)}$ seminorm.
     \item[\it (iv)] Considering a {\em Caffarelli-Silvestre type extension (cf. \cite{CafSi,BGS, ChangGon})}, namely, a degenerate-harmonic extension problem in one extra dimension. One can set
     \begin{equation*}
         [u]^2_{H^{s/2}(M)} = \inf \left\{ \int_{M\times [0,\infty)} z^{1-s} |\widetilde {\nabla}   U(x,z)|^2 \, d\mu(x) dz \s {\rm s.t.} \s U(x, 0)=u(x) \right\}.
     \end{equation*}
Here $\widetilde {\nabla}$ denotes the Riemannian gradient of the manifold $\widetilde M = M\times [0,\infty)$, with respect to natural product metric, and the infimum is taken over all the extensions $U \in X$, where $ X = H^1(\widetilde{M} \,; z^{1-s} d\mu dz)$ is the classical weighted Sobolev space of the functions $ U \in L^2(\widetilde{M} \,; d\mu)$ with respect to the measure $d\mu = z^{1-s}d\mu dz$ that admit a weak gradient $  \widetilde {\nabla} U \in L^2(\widetilde{M} \,;  d\mu)$. 
 \end{itemize}

The spectral definition $(iii)$ can be extended to manifolds that are not closed, where the spectrum of the Laplacian is not discrete. Nevertheless, the equivalence between $(i)$ and $(iv)$ also holds on many (but not every) complete Riemannian manifolds, which are not necessarily compact. For example, a lower Ricci curvature bound is sufficient. See \cite{BGS} for general conditions for which the equivalence of $(i) \iff (iv)$ holds. Moreover, under suitable assumptions on $u$, the equivalence between $(i)$ and $(ii)$ holds if and only if $M$ is stochastically complete; we will treat this equivalence in \autoref{flapsection}.

\vsp
 Since in the present work, we aim to study the asymptotics of the fractional $s$-perimeter on complete Riemannian manifolds (not necessarily closed or with curvature bounded below), we work with the singular integral definition \eqref{aaa} since it extends naturally to the case of general manifolds and weighted manifolds. Then, the fractional $s$-perimeter on a Riemannian manifold is naturally defined by means of the fractional Sobolev seminorm. 
 
 \vsp
 Here and in the rest of the work, $(M,g)$ will denote a general complete, connected Riemannian manifold, and hence also geodesically complete. We denote by $d\mu$ its Riemannian volume form and by $H_M(x,y,t)$ the heat kernel of $(M,g)$. To see how to build the heat kernel on a general (weighted) manifold, see the classical reference \cite{GrygBook}. Moreover, we denote by $B_R(p) \subset M$ the geodesics ball on $M$ and by $\B_R(0) \subset \R^n$ the one on $\R^n$.
 
 \begin{definition}\label{fracsobdef}
    Let $(M,g)$ be a complete Riemannian manifold and $s \in (0, 2)$. Then, we set 
     \begin{equation*}
H^{s/2}(M) :=\big\{u\in L^2(M)  \text{ : } [u]^2_{H^{s/2}(M)}<\infty \big \} \,,
\end{equation*}
where
\begin{equation*}\label{sfgs}
    [u]^2_{H^{s/2}(M)} := \iint_{M\times M} (u(x)-u(y))^2 \mathcal{K}_{s}(x,y) \, d\mu(x) d\mu(y) \,,
\end{equation*}
and $\mathcal{K}_{s}$ is defined as in \eqref{singkerdef}. 
 \end{definition}
Moreover, we will use the singular integral
\begin{equation}\label{flap}
    (-\Delta)^{s/2}_{\rm Si} u (x) := {\rm P.V.} \,\frac{1}{|\Gamma(-s/2)|} \int_{M} (u(x)-u(y)) \mathcal{K}_s(x,y) \, d\mu(y)
\end{equation}
as our main definition of {\em "the fractional Laplacian"} on $M$. We stress that in the general setting of complete Riemannian manifolds, this integro-differential operator cannot be regarded as a fractional power of the Laplacian in any reasonable sense. In particular:
\begin{itemize}
 \setlength\itemsep{3pt}
    \item If $M$ is not stochastically complete (see Definition \ref{def stoch comp}), then $(i)$ and $(ii)$ do not coincide. In this case, since $e^{t\Delta} 1  \neq 1$, the Bochner fractional Laplacian $(ii)$ of a constant does not equal zero. In particular, defining the fractional Sobolev seminorm with $(ii)$ would imply that the $s$-perimeter is not invariant under complementation $P_s(E) \neq P_s(E^c) $. Nevertheless, with our definition via the singular integral $(i)$, one has that the seminorm of a constant is always zero, and hence, in this work, the fractional perimeter is always invariant under complementation.  
    \item The semigroup property $ (-\Delta)^{\alpha + \beta} = (-\Delta)^\alpha \circ (-\Delta)^\beta  $ also fails in general for our definition \eqref{flap}. Indeed, one can see that the equivalence $(i) \iff (iv)$ above is sufficient for the semigroup property to hold. For example, a Ricci curvature lower bound would be sufficient. See \cite{BGS} for many sufficient conditions for the equivalence $(i) \iff (iv)$. 
\end{itemize} 

\begin{definition}\label{fper def glob}
     For a measurable set $E \subset M$, we define the fractional $s$-perimeter of $E$ on $(M,g)$ as
\begin{equation*}
    P_s(E) := [\chi_E]^2_{H^{s/2}(M)} = 2\iint_{E\times E^c} \mathcal{K}_s(x,y) \, d\mu(x)d\mu(y) \,,
\end{equation*}
where $[\,\cdot \, ]^2_{H^{s/2}(M)}$ is defined by \eqref{aaa} and $\chi_E$ is the characteristic function of $E$. 
 \end{definition}

 Apart from the above definition of the fractional perimeter of a set $E$ on the entire $M$, we will also consider its localized version. For $A,B \subset M$ disjoint and measurable sets, let 
\begin{equation*}
    \mathcal{J}_s(A,B) := \iint_{A \times B} \mathcal{K}_s(x,y) \, d\mu(x) d\mu(y)
\end{equation*}
be the $s$-interaction functional between the sets $A$ and $B$.

\begin{definition}\label{fper def loc}
Let $(M,g)$ be a complete Riemannian manifold, and let $\Omega \subset M$ be an open and connected set with Lipschitz boundary. We define the $s$-perimeter of $E$ in $\Omega $ as

\begin{align*}
   \frac{1}{2} P_s(E, \Omega) & := \iint_{(M \times M ) \setminus ( \Omega^c \times \Omega^c )} (\chi_E(x)-\chi_E(y))^2 \mathcal{K}_s(x,y) \, d\mu(x) d\mu(y) \\[1ex] & \vspace{5pt} =  \J_s(E\cap \Omega, E^c \cap \Omega ) +  \J_s(E\cap \Omega, E^c \cap \Omega^c ) +  \J_s(E\cap \Omega^c, E^c \cap \Omega ) \,.
\end{align*}

\end{definition}

For any measurable $E\subset M$, it is clear by the definition above that $P_s(E, \Omega)=P_s(E^c, \Omega)$, that $P_s(E, M) = P_s(E) = [\chi_E]_{H^{s/2}(M)}^2$ and also that $P_s(E, \Omega) = P_s(E) $ if $E\subset \Omega$ or $E^c \subset \Omega$.

\vsp

 \begin{remark} The hypothesis $P_{s_\circ}(E, \Omega) <+\infty $ for some $s_\circ \in (0,1)$ cannot be removed in neither of these results. Indeed, in \cite[Example 2.10]{dpfv} the authors exhibit a bounded set $E \subset \R$ such that $P_s(E)= +\infty $ for all $s\in (0,1)$.
 \end{remark} 
 
\begin{remark}\label{constantinRn}
Note that, taking $M=\R^n$ with its standard metric in Theorem \ref{mainnoncompact} gives $\mathcal{K}_s(x,p)=\frac{\beta_{n,s}}{|x-p|^{n+s}}$, where
\begin{equation*}
    \beta_{n,s} = \frac{s 2^{s-1} \Gamma\Big(\tfrac{n+s}{2}\Big)}{\pi^{n/2}\Gamma(1-s/2)} \,.
\end{equation*}
Hence
\begin{equation*}
         \theta_{\R^n} = \lim_{s\to 0^+}  \int_{\R^n \cap B_R^c(p)} \frac{\beta_{n,s}}{|x-p|^{n+s}} dx  = \frac{\Gamma(\frac{n}{2})}{2\pi^{n/2}} \lim_{s\to 0^+} s \int_{\R^n \cap B_1^c(0)} \frac{1}{|x|^{n+s}} dx = \frac{\Gamma(\frac{n}{2})}{2\pi^{n/2}} \alpha_{n-1} =1\,,
     \end{equation*}
where $\alpha_{n-1}$ is the volume of the unit sphere $\Sp^{n-1}$. Moreover, analogously for $E\subset \R^n$ (if the limit exists)
\begin{equation*}
         \theta_E = \lim_{s\to 0^+} \int_{E \cap B_1^c(0)} \frac{\beta_{n,s}}{|x|^{n+s}} dx = \frac{1}{\alpha_{n-1}} \lim_{s\to 0^+} s \int_{E \cap B_1^c(0)} \frac{1}{|x|^{n+s}} dx  \in [0, 1] \,,
     \end{equation*}
     which is (up to the absolute multiplicative constant $\alpha_{n-1}^{-1}$) what is denoted by $\alpha(E)$ in \cite{dpfv}. Hence, we see that in the case of the Euclidean space our result Theorem \ref{mainnoncompact} recovers the one in \cite{dpfv}.
\end{remark}

\begin{remark}
    Note that, as $s\to 0^+$, the constant in \eqref{singkerdef} satisfies 
\begin{equation*}
    \frac{1}{|\Gamma(-s/2)|} = \frac{s/2}{\Gamma(1-s/2)} \sim \frac{s}{2} \,.
\end{equation*}
 We will use this fact many times in the computations of the asymptotics. 
\end{remark}

The paper is divided as follows. In \autoref{section2} we recall some facts and definitions that we will need regarding the heat kernel and harmonic functions on general complete manifolds. In \autoref{section3} we prove the all the main results stated at the beginning of the introduction. Then, building on our main results, in \autoref{section4} and \autoref{section5} we prove Theorem \ref{mainnoncompact} and Theorem \ref{maincompact} regarding the asymptotics of the fractional perimeter in infinite volume and finite volume respectively.

\vsp
   Lastly, in \autoref{extension section} we explain why our results hold in a much more general setting than the one of Riemannian manifolds, namely {\rm RCD} spaces. We could have proved our theorem directly in this generality, but we believe that a presentation for Riemannian manifolds is easier to follow and already captures all the possible (two) behaviors of the limit of the asymptotics: this also allows us to present different proofs. For these reasons, we have moved everything regarding non-smooth spaces to \autoref{extension section}.

   \vspace{10pt}

\textbf{Acknowledgements.} 
We are very thankful to Diego Pallara and Nicola Gigli for useful discussions and comments on a draft of this work and to Alexander Grigoryan for having shown us how to prove part $(ii)$ of Proposition \ref{gryg}. We also thank the Fields Institute in Toronto, which is where the two authors first met, for the kind hospitality during the thematic program "Nonsmooth Riemannian and Lorentzian Geometry" in the fall semester 2022 . 

	\section{The heat kernel on Riemannian manifolds}\label{section2}

 Let us start by recalling a few classical definitions and results.

 \begin{definition}[Stochastical completeness]\label{def stoch comp} We call a Riemannian manifold $(M,g)$ stochastically complete if, for every $t>0$ and for every $p\in M$
 \begin{equation}\label{asrgas}
     \int_M H_M(x,p,t) \, d\mu(x) = 1 \,.
 \end{equation} 
 \end{definition}
For equivalent definitions of stochastical completeness, one can refer to the manuscript \cite{GrygBook} or to the more recent \cite{Grillo20} and \cite{grillo2023general}.
 \begin{lemma}
     \label{Mass decay}
     Let $(M,g)$ be a complete Riemannian manifold, then for every $p\in M$
     \begin{equation*}
       \mathcal{M}(t,p) =  \int_M H_M(x,p,t)\, d\mu(x) \s \textit{is nonincreasing in $t$.}
     \end{equation*}
 \end{lemma}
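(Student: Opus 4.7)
The plan is to derive the monotonicity of $t \mapsto \mathcal{M}(t,p)$ directly from the semigroup property of the heat kernel, combined with the standard estimate $\int_M H_M(x,z,s)\,d\mu(x) \leq 1$ that holds on every complete Riemannian manifold (i.e.\ the inequality direction of \eqref{asrgas}, even without assuming stochastic completeness).

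The first step is to establish $\mathcal{M}(s,z) \leq 1$ for every $z\in M$ and every $s>0$. I would obtain this via exhaustion: take an increasing sequence $\Omega_k \uparrow M$ of relatively compact open sets with smooth boundary, and consider the corresponding Dirichlet heat kernels $H_{\Omega_k}$. Since the Dirichlet heat kernel is symmetric, the function $v_k(z,t) := \int_{\Omega_k} H_{\Omega_k}(x,z,t)\,d\mu(x)$ solves the heat equation on $\Omega_k$ with zero boundary data and initial datum identically $1$, so the parabolic maximum principle gives $0 \leq v_k \leq 1$. By the very construction of the minimal heat kernel in \cite{GrygBook}, $H_{\Omega_k} \uparrow H_M$ monotonically, and the bound passes to the limit by monotone convergence, yielding $\mathcal{M}(s,z)\leq 1$.

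The second step is to plug in the semigroup identity
\[ H_M(x,p,t+s) = \int_M H_M(x,z,s)\, H_M(z,p,t)\, d\mu(z). \]
Integrating in $x$, using nonnegativity together with Fubini and the symmetry $H_M(x,z,s) = H_M(z,x,s)$, I get
\[ \mathcal{M}(t+s,p) = \int_M H_M(z,p,t)\, \mathcal{M}(s,z)\, d\mu(z) \leq \int_M H_M(z,p,t)\, d\mu(z) = \mathcal{M}(t,p), \]
which is exactly the claim. No step is a real obstacle here: the semigroup identity, symmetry, and the exhaustion by Dirichlet heat kernels are built into the construction of $H_M$ in \cite{GrygBook}; the only minor care is invoking Fubini, which is justified by the nonnegativity of the integrand.
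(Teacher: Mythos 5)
Your proof is correct and follows essentially the same route as the paper: write $H_M$ at the later time via the Chapman--Kolmogorov semigroup identity, integrate in the free variable, apply Tonelli/Fubini (justified by nonnegativity), and bound the inner integral by $1$ using the sub-Markovian property of the minimal heat kernel. The only difference is cosmetic: the paper simply invokes $\int_M H_M\,d\mu \le 1$ as a known fact about the minimal heat kernel, whereas you spell out its standard proof via exhaustion by Dirichlet heat kernels and the parabolic maximum principle.
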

 \begin{proof}
     The proof is an easy consequence of the semigroup property. Indeed, for $t>s$ we can write 
     \begin{equation*}
        H_M(z,p,t) = \int_{M} H_M(z,x,t-s) H_M(x,p,s) \,d\mu(x) .
     \end{equation*}
     Integrating in $d\mu(z)$, using Fubini's theorem and the fact that $\int_{M}H_M(z,x,t-s)d\mu(x)\leq 1$ we get
     \begin{equation*}
         \int_M H_M(z,p,t)d\mu(z)\leq\int_M H_M(x,p,s)d\mu(x),
         \end{equation*}
         which is the thesis.
 \end{proof}
 Note that, because of Lemma \ref{Mass decay}, being stochastically complete is equivalent to the fact that \eqref{asrgas} holds for one single time $t=t_\circ>0$.

 \begin{theorem}[Yau]\label{dfsdfs} Let $(M,g)$ be a complete Riemannian manifold. Then every $L^2(M)$ harmonic function is constant.
\end{theorem}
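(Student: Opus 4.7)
The plan is to prove Yau's theorem by the classical Caccioppoli-type argument with a radial cutoff, exploiting only the geodesic completeness of $(M,g)$ and the $L^2$ hypothesis on $u$. Let $u \in L^2(M)$ be a harmonic function (by elliptic regularity it is smooth, so all pointwise manipulations below are justified).

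First, I would construct for every $R>0$ a Lipschitz cutoff $\varphi_R: M \to [0,1]$ that equals $1$ on $B_R(p)$, vanishes outside $B_{2R}(p)$, and satisfies $|\nabla \varphi_R|\le C/R$ almost everywhere. Since $M$ is geodesically complete, the distance function $d_p(x):=d(x,p)$ is $1$-Lipschitz and hence differentiable $\mu$-a.e. with $|\nabla d_p|\le 1$, so one can simply take $\varphi_R(x):=\eta(d_p(x)/R)$ for a smooth $\eta:[0,\infty)\to[0,1]$ with $\eta\equiv 1$ on $[0,1]$, $\eta\equiv 0$ on $[2,\infty)$. The closed balls $\overline{B_{2R}(p)}$ are compact by Hopf--Rinow, so $\varphi_R$ is compactly supported.

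Next, using $\varphi_R^2 u$ as a test function against $\Delta u = 0$ and integrating by parts yields
\begin{equation*}
\int_{M} |\nabla u|^2 \varphi_R^2 \, d\mu = -2\int_{M} u\,\varphi_R\, \langle \nabla u, \nabla \varphi_R\rangle \, d\mu.
\end{equation*}
Applying Cauchy--Schwarz on the right-hand side and then absorbing one factor of $\bigl(\int |\nabla u|^2 \varphi_R^2\bigr)^{1/2}$ into the left-hand side gives the Caccioppoli inequality
\begin{equation*}
\int_{M} |\nabla u|^2 \varphi_R^2 \, d\mu \;\le\; 4\int_{M} u^2 |\nabla \varphi_R|^2 \, d\mu \;\le\; \frac{4C^2}{R^2}\int_{M} u^2 \, d\mu.
\end{equation*}

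Finally I would let $R\to\infty$: the right-hand side tends to zero thanks to $u\in L^2(M)$, while the left-hand side converges by monotone convergence to $\int_M |\nabla u|^2\, d\mu$. Therefore $\nabla u \equiv 0$, and the connectedness of $M$ (assumed throughout the excerpt) forces $u$ to be constant. The only non-routine point is the construction of $\varphi_R$ with a.e.\ bounded gradient on a manifold without any curvature assumption, but this is handled by the observation above that $d_p$ is globally $1$-Lipschitz; the integration by parts then only requires Lipschitz cutoffs, not smooth ones.
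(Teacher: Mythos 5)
Your argument is correct and follows the same route as the paper: the paper simply cites the Caccioppoli-type inequality (from Li's lecture notes) and lets $R\to\infty$, whereas you have supplied the standard self-contained derivation of that inequality via a Lipschitz radial cutoff, integration by parts, Cauchy--Schwarz, and absorption. The details you flag (compactness of $\overline{B_{2R}(p)}$ by Hopf--Rinow, a.e.\ differentiability of $d_p$, smoothness of $u$ by elliptic regularity) are exactly what is needed to make the cited lemma rigorous without curvature assumptions, so there is no gap.
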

\begin{proof}
    Let $u\in L^2(M)$ be harmonic. It is a standard result by Yau (see for example \cite[Lemma 7.1]{Liga}) that, on every complete Riemannian manifold $M$, the Caccioppoli-type inequality
    \begin{equation}\label{ndfhgnf}
        \int_{B_R(p)} |\nabla u|^2 \, d\mu \le \frac{4}{R^2} \int_{B_{2R}(p)} |u|^2 \, d\mu 
    \end{equation}
    holds. Since $u\in L^2(M)$, letting $R\to \infty$ gives that $u$ is constant.
\end{proof}

 \begin{definition}[$L^\infty-{\rm Liouville}$ property]\label{LinfLiouville} We say that a Riemannian manifold $(M,g)$ has the $L^\infty-{\rm Liouville}$ property if every bounded harmonic function on $M$ is constant.
 \end{definition}

 Since the validity of the $L^\infty-{\rm Liouville}$ property will be a key feature in our result for infinite volume, we shall recall few conditions that imply this property. See \cite{Gryg22222} for more general conditions under which the $L^\infty-{\rm Liouville}$ property holds.

 \begin{proposition}
     Let $(M,g)$ be a complete Riemannian manifold. Then, each of the following properties implies the $L^\infty-{\rm Liouville}$ property for $M$:
     \begin{itemize}
     \setlength\itemsep{3pt}
         \item[(i)] $\Ric_M\ge 0$.
         \item[(ii)] $\mu(B_R(p))/R^2\to 0 $ as $R\to \infty $ for some (and hence any) $p\in M$.
         \item[(iii)]  There exists a metric $\widetilde g$ on $M$ and $K\subset M$ compact such that $\widetilde g = g $ in $M\setminus K$ and $(M,\widetilde g)$ has the $L^\infty-{\rm Liouville}$ property. 
     \end{itemize}
 \end{proposition}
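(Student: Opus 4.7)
The plan is to handle the three parts separately, in increasing order of difficulty: (ii), then (i), and finally (iii) which will be the main obstacle. For (ii) the argument is immediate from the Caccioppoli-type inequality \eqref{ndfhgnf} obtained in the proof of \autoref{dfsdfs}. For a bounded harmonic $u$ with $\|u\|_{\infty} \le C$,
\begin{equation*}
\int_{B_R(p)} |\nabla u|^2 \, d\mu \le \frac{4}{R^2} \int_{B_{2R}(p)} u^2 \, d\mu \le \frac{16 C^2 \mu(B_{2R}(p))}{(2R)^2},
\end{equation*}
and the volume growth hypothesis forces the right-hand side to zero as $R \to \infty$, so $\nabla u \equiv 0$ and $u$ is constant by the connectedness of $M$.

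For (i) the above argument is too weak, since $\Ric_M \ge 0$ only yields polynomial volume growth $\mu(B_R) \le C R^n$ via Bishop-Gromov. Instead I would invoke the Cheng-Yau gradient estimate: every positive harmonic function $v$ on $B_{2R}(p)$ in a manifold with $\Ric \ge 0$ satisfies $|\nabla \log v|(x) \le C(n)/R$ for $x \in B_R(p)$. Given a bounded harmonic $u$ on $M$, after replacing $u$ by $u + \|u\|_{\infty} + 1$ it becomes strictly positive; applying the estimate on $B_R(p)$ with $R \to \infty$ forces $\nabla u(p) = 0$ at every $p$, so $u$ is constant.

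For (iii), the main obstacle, the guiding principle is that the $L^\infty$-Liouville property depends only on the geometry of $M$ at infinity, hence is invariant under compact modifications of the metric. To make this precise I would split by parabolicity of $(M, \widetilde g)$. In the non-parabolic case, for any bounded $g$-harmonic $u$ the function $f := \Delta_{\widetilde g} u = (\Delta_{\widetilde g} - \Delta_g) u$ is smooth and compactly supported in $K$, because outside $K$ the two metrics, and hence the two Laplacians, agree; the $\widetilde g$-Green potential $w$ of $f$ is then bounded and vanishes at infinity, so $u - w$ is bounded $\widetilde g$-harmonic, hence constant $c$ by hypothesis, forcing $u = c + w \to c$ at infinity. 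The strong maximum principle applied to $u - c$ on $M$ then yields $u \equiv c$. In the parabolic case I would instead invoke the classical facts that parabolicity is preserved under compact modifications of the metric and that parabolic manifolds automatically enjoy the $L^\infty$-Liouville property. The technical heart of (iii) is thus to justify the boundedness of the Green potential in the non-parabolic case and the stability of parabolicity in the other, both of which are standard but non-trivial ingredients from the potential theory of Riemannian manifolds.
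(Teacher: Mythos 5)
Parts (i) and (ii) are fine. For (ii) your argument is exactly the paper's: Yau's Caccioppoli inequality plus the volume-growth hypothesis. For (i) you take a genuinely different route: you invoke the Cheng--Yau gradient estimate for positive harmonic functions, whereas the paper applies the $L^\infty-{\rm Lip}$ regularization \eqref{Lip regularization} (a Bakry--\'Emery type estimate) to the heat flow $e^{t\Delta}u=u$, letting $t\to\infty$. Both are valid; the Cheng--Yau route is the classical one, the heat-flow route is the one the paper prefers because it generalizes verbatim to the ${\rm RCD}$ setting of \autoref{extension section}.

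For (iii) the paper simply cites \cite[Proposition~4.2, Theorem~5.1]{Gryg22222} and gives no proof, so there is nothing to compare step by step; you attempt a self-contained argument, which is more ambitious, and while the overall strategy (invariance of the Liouville property under compact perturbation, splitting by parabolicity of the common geometry at infinity) is the right one, there is a genuine gap in the non-parabolic case. You assert that the $\widetilde g$-Green potential $w$ of $f:=\Delta_{\widetilde g}u$ ``is bounded and vanishes at infinity.'' Boundedness can be justified (Harnack plus the maximum-principle bound $G(x,y_0)\le\sup_{\partial B_r(y_0)}G(\cdot,y_0)$ for $x\notin B_r(y_0)$), but vanishing at infinity is neither obvious nor stated with a reason, and your subsequent maximum-principle step uses it essentially. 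In fact it requires two nontrivial observations you do not record: first, that $\int_M f\,d\mu_{\widetilde g}=0$ (this follows because $f$ is supported in $K$, $\widetilde g=g$ on $\partial V$ for any $V\supset K$ with smooth boundary, and the divergence theorem applied to the $g$-harmonic $u$ on $V$ kills the flux); second, that the limit points of $G_{\widetilde g}(x_n,\cdot)$ along sequences $x_n\to\infty$ are constant functions, which is where the Liouville hypothesis for $\widetilde g$ enters (the limit is a nonnegative harmonic function, and one still has to show it is bounded before Liouville applies -- this boundedness is not automatic and is the hardest part). Without these ingredients the claim $u=c+w\to c$ at infinity is not established. You also implicitly use that parabolicity of $(M,g)$ and of $(M,\widetilde g)$ coincide, which is true but should be stated alongside the other ``standard facts'' you invoke in the parabolic case. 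In short: right plan, but the heart of the non-parabolic case is precisely what is missing.
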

 \begin{proof} To show $(i)$ we just need to apply the $L^\infty-{\rm Lip}$ regularization of \eqref{Lip regularization}, that we state in general for ${\rm RCD}$ spaces in \autoref{extension section} and we give a simple proof at the end of the Appendix. Indeed let $u\in L^\infty(M)$ be such a function: we can clearly assume $\|u\|_{L^\infty}=1$ so that we have $\|\nabla e^{t\Delta}u\|_{L^\infty}\leq C/\sqrt{t}$. The previous estimate tells us that $\|\nabla e^{t\Delta}u\|_{L^\infty}\to 0$ as $t\to\infty$ so that $e^{t\Delta}u\to {\rm const}$ weakly star in $L^\infty(M)$. However, we also know that $e^{t\Delta}u=u$ for every $t\in (0,\infty)$ because of the uniqueness of the solution of the heat equation (due to stochastical completeness which holds in the presence of a lower Ricci curvature bound) and this means that $u$ has to be constant.
 
 Part $(ii)$ follows from Yau's estimate \eqref{ndfhgnf} letting $R\to \infty$. Lastly, the proof of part $(iii)$ is contained in \cite[Proposition 4.2]{Gryg22222} and \cite[Theorem 5.1]{Gryg22222}.
 \end{proof}
 
 Notice that $\Ric_M\ge -K$ for some $K>0$ is not sufficient for the $L^\infty-{\rm Liouville}$ property to hold since there exist non-constant bounded harmonic functions on the hyperbolic space $\mathbb{H}^n$. Since $\mathbb{H}^n$ is stochastically complete, this means that stochastical completeness does not imply the $L^\infty-{\rm Liouville}$ property. Moreover, quite surprisingly, stochastical completeness of $M$ is not implied by the $L^\infty-{\rm Liouville}$ property. The first example of such a manifold was constructed by Pinchover in \cite{PinYehu95}, we briefly explain this construction in Example \ref{exampleshit}.  
 
\vsp
 In the next lemma, we give the proof of a result that, perhaps, is well-known to the experts, but we could not find an appropriate reference. The case $\mu(M)<+\infty$ is stated in \cite{GrygBook} as Exercise 11.21. 

 \vsp 
 We stress that these results easily extend to the context of weighted Riemannian manifolds.

\begin{lemma}\label{gryg}
    Let $(M,g)$ be a complete, connected Riemannian manifold. Then
    \begin{itemize}
        \item[(i)] If $\mu(M)<+\infty$, then for all $x,y\in M$
        \begin{equation*}
           \lim_{t \to +\infty} H_M(x,y,t) =\frac{1}{\mu(M)} \,,
        \end{equation*}
        and the convergence is uniform in every bounded $\Omega \subset M$, that is
         \begin{equation*}
           \lim_{t \to +\infty} \sup_{ x,y \in \Omega} \left| H_M(x,y,t)-\frac{1}{\mu(M)} \right|=0 \,.
        \end{equation*}
        \item[(ii)] If $\mu(M)=+\infty$, then for all $x,y\in M$
        \begin{equation*}
           \lim_{t \to +\infty} H_M(x,y,t) =0 \,,
        \end{equation*}
        and the convergence is uniform in every bounded $\Omega \subset M$, that is
         \begin{equation*}
           \lim_{t \to +\infty} \sup_{ x,y \in \Omega} H_M(x,y,t) =0 \,.
        \end{equation*}
        Moreover, for every fixed $p\in M$ there holds also
            \begin{equation}\label{ryw235w4}
           \lim_{t \to +\infty} \sup_{ x \in M} H_M(x,p,t) =0 \,.
        \end{equation}

    \end{itemize}
\end{lemma}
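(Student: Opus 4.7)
The plan is to reduce each convergence statement to the long-time $L^2$-behaviour of the heat semigroup $e^{t\Delta}$. The key input is Theorem~\ref{dfsdfs} (Yau), which characterises the $L^2$-kernel of $-\Delta$ as the constant functions; since $1\in L^2(M)$ exactly when $\mu(M)<+\infty$, this kernel is one-dimensional (the constants) in case (i) and trivial in case (ii). The spectral theorem for the self-adjoint operator $-\Delta$ then gives strong $L^2$-convergence $e^{t\Delta}f\to Pf$ as $t\to+\infty$, with $P$ the orthogonal projection onto $\ker(-\Delta)$: so $Pf = \frac{1}{\mu(M)}\int_M f\,d\mu$ in case (i) and $Pf=0$ in case (ii). Applying this with $f := H_M(\cdot,y,t_0)\in L^2(M)$ (since $\|f\|_{L^2}^2 = H_M(y,y,2t_0)<+\infty$), using $e^{t\Delta}f = H_M(\cdot,y,t+t_0)$, and invoking the automatic stochastic completeness of a complete manifold of finite volume (which forces $\int H_M(x,y,t_0)\,d\mu(x)=1$), I obtain $H_M(\cdot,y,t)\to 1/\mu(M)$ in $L^2$ in case (i) and $H_M(\cdot,y,t)\to 0$ in $L^2$ in case (ii).

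\textbf{Pointwise and uniform convergence on bounded sets.} To promote $L^2$-convergence to pointwise and then uniform convergence, I would use the semigroup identity
\[
H_M(x,y,t+s)-L \;=\; \int_M H_M(x,z,s)\bigl(H_M(z,y,t)-L\bigr)\,d\mu(z),
\]
where $L$ is the relevant limit (the identity uses stochastic completeness to pull the constant $L$ inside the integral in case (i); it is immediate in case (ii)). Cauchy--Schwarz then gives
\[
|H_M(x,y,t+s)-L| \;\le\; \sqrt{H_M(x,x,2s)}\,\bigl\|H_M(\cdot,y,t)-L\bigr\|_{L^2(M)}.
\]
The squared $L^2$-norm on the right equals $H_M(y,y,2t)-L^2\mu(M)$, is continuous in $y$, and is \emph{non-increasing} in $t$ (by the standard energy-decay of the heat flow applied to $H_M(\cdot,y,t)-L$, using $\Delta L=0$); combined with the already obtained $L^2$-convergence, this forces monotone decrease to zero at every $y$. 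Dini's theorem on $\overline\Omega$ (compact by Hopf--Rinow since $\Omega$ is bounded in the complete manifold $M$) then yields uniform convergence in $y\in\Omega$, and boundedness of the continuous function $x\mapsto H_M(x,x,2s)$ on $\overline\Omega$ gives uniformity in the other variable as well.

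\textbf{The global supremum \eqref{ryw235w4}.} This is the step I expect to be the main obstacle. The natural attempt, Cauchy--Schwarz on the convolution,
\[
H_M(x,p,t)\;\le\;\sqrt{H_M(x,x,t)\,H_M(p,p,t)},
\]
would close the proof via $H_M(p,p,t)\to 0$ \emph{provided} $\sup_{x\in M}H_M(x,x,\tau)<+\infty$ for some $\tau>0$ (an ultracontractivity estimate), which is not automatic on a general complete manifold. To bypass this I would split $M=K\sqcup K^c$ with $K$ a large compact set in
\[
H_M(x,p,t+s) = \int_K H_M(x,z,s)H_M(z,p,t)\,d\mu(z) + \int_{K^c} H_M(x,z,s)H_M(z,p,t)\,d\mu(z),
\]
controlling the first integral by the uniform-on-compacts bound $\sup_{z\in K}H_M(z,p,t)\to 0$ of the previous step together with the mass bound $\int H_M(x,z,s)\,d\mu(z)\le 1$ of Lemma~\ref{Mass decay}, and the second integral by combining the same mass bound with Markov's inequality $\mu(\{H_M(\cdot,p,t)>\delta\})\le \delta^{-2}H_M(p,p,2t)\to 0$ and the $L^2$-decay $\|H_M(\cdot,p,t)\|_{L^2}\to 0$. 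Making this splitting deliver a bound uniform in $x\in M$ is the technical core of \eqref{ryw235w4}.
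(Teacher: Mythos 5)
Your spectral reduction, the $L^2$-convergence step, and the appeal to Yau's $L^2$-Liouville theorem coincide with the paper's argument. The promotion to uniform convergence on bounded sets is where you genuinely diverge: you use the identity $\|H_M(\cdot,y,t)-L\|_{L^2(M)}^2 = H_M(y,y,2t)-L$, the $L^2$-contraction to get monotonicity in $t$, and Dini's theorem on $\overline\Omega$, whereas the paper tests against $\chi_{B_R(p)}$ and applies a local parabolic Harnack inequality twice to swap $\inf$ and $\sup$ over $B_R(p)\times B_R(p)$, then covers $\Omega$ by small balls. Your route avoids Harnack entirely and relies only on contraction of $e^{t\Delta}$, continuity of $(x,t)\mapsto H_M(x,x,t)$, and Dini; it is a clean and correct alternative for this part of the lemma.

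For \eqref{ryw235w4} the proposal does not close, and this is a genuine gap. You correctly identify that the naive Cauchy--Schwarz bound $H_M(x,p,t)\le\sqrt{H_M(x,x,t)\,H_M(p,p,t)}$ would need $\sup_{x\in M}H_M(x,x,\tau)<\infty$, an ultracontractivity estimate that is not automatic on a general complete manifold. Note that the paper in fact goes down essentially that Cauchy--Schwarz road: it writes $H_M(x,p,t)\le\sqrt{H_M(p,p,t)}\,\|H_M(x,\cdot,t/2)\|_{L^2(M)}$ and then invokes the $L^2$-contraction $\|H_M(x,\cdot,t)\|_{L^2}\le\|H_M(x,\cdot,1)\|_{L^2}$ to assert that $\sup_{x\in M}\|H_M(x,\cdot,t/2)\|_{L^2}$ stays bounded as $t\to\infty$; but the right side of that contraction bound is $\sqrt{H_M(x,x,2)}$, still $x$-dependent, so the very concern you raise is not obviously sidestepped by that argument either. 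Your attempted bypass via the split $M=K\sqcup K^c$ fails at the same spot: the first integral is controlled uniformly in $x$ by local uniform convergence on $K$ together with the mass bound, but for $\int_{K^c}H_M(x,z,s)H_M(z,p,t)\,d\mu(z)$, splitting $K^c$ over the superlevel set $A_\delta:=\{z: H_M(z,p,t)>\delta\}$ only handles the sublevel part (bounded by $\delta\int H_M(x,z,s)\,d\mu(z)\le\delta$), while the part over $A_\delta$ still requires a control of $\|H_M(x,\cdot,s)\|_{L^2}$ or $\|H_M(x,\cdot,s)\|_{L^\infty}$ that is uniform in $x$ — and you are back to the missing ultracontractivity. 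To prove \eqref{ryw235w4} as stated you need an additional, structurally different idea (for instance a uniform-in-$x$ off-diagonal bound on $\int_{K^c}H_M(x,z,s)\,d\mu(z)$, or an on-diagonal assumption), and it is not supplied by the tools you have listed.
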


\begin{proof}
    To prove the result we use standard spectral theory. Let us first do the case $\mu(M)=+\infty$. The spectrum of the Laplacian $\sigma(-\Delta)$ is contained in $[0,\infty)$ and by Theorem \ref{dfsdfs} we know that the eigenspace of $\lambda=0$ contains no constant function except for the function identically $0$. 
    
    Let $ \{E_\lambda \}_{\lambda \ge 0} $ be the spectral resolution of the Laplacian, then for every $f\in L^2(M)$ (here $\lp \,\cdot \, ,\,\cdot \,\rp$ denotes the $L^2(M)$ scalar product)
    \begin{equation*}
        \lp e^{t\Delta}f,f \rp=\int_0^\infty e^{-t\lambda}d\lp E_\lambda f,f\rp.
    \end{equation*}
    
 Since $\lim_{t\to\infty}e^{-\lambda t}=\chi_{\{0\}}(\lambda)$ we can apply dominated convergence to deduce that 
\begin{equation*}
    \lim_{t\to \infty} \lp e^{t\Delta}f,f \rp =\lp E_0f,f \rp \,,
\end{equation*}
and since $E_0$ projects onto the eigenspace of $\lambda=0$, made only by the constant function identically zero, we get
 \begin{equation}\label{eq 1 gryg lem}
        \lim_{t\to\infty} \lp e^{t\Delta}f,f \rp = 0 \,.
    \end{equation}
    
Now note that for all $f,g \in L^2(M)$ we have $| \lp e^{t\Delta}f, g \rp |=|\lp e^{t/2\Delta}f,e^{t/2\Delta} g  \rp |$. Thus by Cauchy-Schwartz
    \begin{align*}
        \lp e^{t\Delta}f , g \rp =    \lp e^{t/2 \Delta}f , e^{t/2 \Delta}g \rp & \le \| e^{t/2 \Delta}f \|_{L^2} \| e^{t/2 \Delta} g \|_{L^2} \\ & = \lp e^{t\Delta}f ,f \rp \lp e^{t\Delta} g, g \rp \,.
    \end{align*}
    Taking the supremum over $g \in L^2(M)$ with $\| g \|_{L^2} \le 1$ and sending $t\to \infty$ gives that $e^{t\Delta}f \to 0 $ strongly in $L^2(M)$. Since this holds for all $f\in L^2(M)$, this implies $H_M(\cdot,y,t) \to 0 $ in $L^2(M)$ as $t\to \infty$. 

\vsp
 Now, by a local parabolic Harnack inequality, we are able to turn this convergence into pointwise convergence that is actually locally uniform. Indeed for $p \in M$, $R \ll 1$ to be chosen depending on $p$, and $t\ge 10$, taking $f=\chi_{B_R(p)}$ above gives
     \begin{equation*}
    \lp e^{t\Delta}\chi_{B_R(p)},\chi_{B_R(p)} \rp = \int_{B_R(p)} \int_{B_R(p)} H_M(x,y,t) \, d\mu(x) d\mu(y) \ge \mu(B_R(p))^2 \inf_{x,y \in B_R(p)}  H_M(x,y,t) \,.
     \end{equation*}
     
     By the parabolic Harnack inequality (see Remark \ref{harnackprop} after this proof) applied two times
     \begin{align*}
         \inf_{x,y \in B_R(p)}  H_M(x,y,t) & \ge C^{-1} \inf_{x \in B_R(p)} \sup_{y \in B_R(p)} H_M(x,y,t-1/2) \\ & \ge C^{-1} \sup_{x \in B_R(p)} \inf_{y \in B_R(p)} H_M(x,y,t-1/2) \\ &  \ge C^{-2} \sup_{x,y \in B_R(p)} H_M(x,y,t-1) \,, 
     \end{align*}
     for some $C>0$ depending on $B_R(p) \subset M $ but independent of $t$. Hence 
     \begin{equation*}
        \sup_{x,y \in B_R(p)} H_M(x,y,t) \le C(B_R(p))  \lp e^{(t+1)\Delta}\chi_{B_R(p)},\chi_{B_R(p)}\rp \to 0 \,,
     \end{equation*}
     as $t\to \infty$. Covering any bounded set with small balls allows us to infer the desired local uniform convergence.

     \vsp
     We are left to prove \eqref{ryw235w4}. By the properties of the heat kernel, we have
    \begin{equation*}
        H_M(p,p,t)=\int_M H_M^2(p,z,t/2)d\mu(z)=\|H_M(p,\cdot,t/2)\|_{L^2(M)}^2.
    \end{equation*}
    Moreover
    \begin{equation*}
        H_M(x,p,t)=\int_M H_M(x,z,t/2)H_M(p,z,t/2)d\mu(z)\leq \sqrt{H_M(p,p,t)}\|H_M(x,\cdot,t/2)\|_{L^2(M)},
    \end{equation*}
    which concludes the proof if we are able to show that $ \sup_{x\in M} \|H_M(x,\cdot,t/2)\|_{L^2(M)}$ is bounded as $t\to\infty$. 
    
    However since $H_M(x,y,t)=e^{(t-1)\Delta}(H_M(x,\cdot,1))(y)$ and we have the contraction estimate $\|e^{s\Delta}(f)\|_{L^2(M)}\leq\|f\|_{L^2(M)}$ for every $s\in(0,\infty)$ and for every $f\in L^2(M)$ we can write
    \begin{equation*}
        \|H_M(x,\cdot,t)\|_{L^2(M)}=\|e^{(t-1)\Delta}(H_M(x,\cdot,1))\|_{L^2}\leq\|H_M(x,\cdot,1)\|_{L^2}\quad\forall t>1.
    \end{equation*}
    Therefore, we reach the sought conclusion. This concludes the proof of $(ii)$. 

\vsp
Now assume $\mu(M)<+\infty$. Since the proof in this case is almost identical to the one for infinite volume, we just sketch the argument, highlighting the differences. The only essential difference is that in the case $\mu(M)<+\infty$, the eigenspace relative to $\lambda=0$ is made only by the constant function $\mu(M)^{-1/2}$. Hence
\begin{equation*}
    E_0f = \frac{1}{\mu(M)}\int_M f \, d\mu =: \fint_M f  \,,
\end{equation*}
and in place of \eqref{eq 1 gryg lem} we get 
\begin{equation*}
     \lim_{t\to \infty} \Big \lp e^{t\Delta}f - \fint_M f ,f  \Big \rp \to 0 \,.
\end{equation*}

From here, the proof proceeds exactly the same showing that $H_M(\cdot ,y,t)-1/\mu(M) \to 0 $ strongly in $L^2(M)$. Then, one can turn the convergence into pointwise and locally uniform by a similar argument with the parabolic Harnack inequality. 

\vsp
Indeed, the function $v:=(H_M(\cdot ,y,t)-1/\mu(M))_+$ (where $(f)_+$ denotes the positive part of $f$) is a nonnegative subsolution to the heat equation. Then, by the parabolic version of the Moser-Harnack inequality (see, for example, \cite[Theorem 5.1]{Saloff-Coste}) we have (here $C$ depends on $R$ and the geometry of $M$ in $B_{2R}(p)$)
\begin{equation}\label{dgfhjdh}
    \sup_{[t+R^2/2, t+R^2]}  v^2 \le C \int_t^{t+R^2} \int_{B_R(p)} |v|^2 \, d\mu \to 0 \,.
\end{equation}
Hence $\limsup_{t \to \infty} H_M(\cdot ,y,t ) \le 1/\mu(M)$. Argiung similarly with the negative part gives also the $\liminf$ inequality, and hence the pointwise convergence. The fact that the convergence is uniform follows from \eqref{dgfhjdh}.

\end{proof}

\begin{remark}
   Since $\| H_M(x,\cdot,t) \|_{L^1(M)} \le 1 $ and $\| H_M(x,\cdot,t) \|_{L^\infty(M)} \to 0 $ as $t\to \infty$, we conclude that also $\| H_M(x,\cdot,t) \|_{L^p(M)} \to 0 $ for any $p\in (1, \infty]$. The convergence to zero in $L^1(M)$ is clearly prevented if $M$ is stochastically complete.
    \end{remark}
    
\begin{remark}\label{harnackprop} We emphasize that we have used only a local (non-uniform) Harnack inequality in $B_R(p) \subset M $, that is where the constant is allowed to depend on the point $p$ and radius $R$. This is clear since, for fixed $p\in M$ one can take $R \ll 1$ such that, in normal coordinates at $p$, the metric coefficients satisfy $ \|g_{ij}-\delta_{ij} \|_{C^2(B_R(p))} \le 1/100$. Then, any solution $u:B_R(p) \to \R $ to the heat equation on $M$ satisfies (in coordinates) 
\begin{equation*}
    u_t-Lu =0 \,, \quad \textit{in } \B_R(0) \times (0, + \infty) \,,
\end{equation*}
where $-L$ is a uniformly elliptic operator with uniformly bounded coefficients. Hence, by the standard Harnack inequality on $\R^n$ one can conclude the local estimate. 

\vsp
On the other hand, for general Riemannian manifolds, a uniform Harnack inequality (that is, with the constant independent of $R$ and the point $p$) fails, and strong assumptions are required for it to hold. Actually, the validity of a volume doubling property and a uniform Poincarè inequality is equivalent to the uniform Harnack inequality, this was first proved in \cite{SaloffHarnack}.
\end{remark}

\begin{remark}\label{heatsolconv}
   One can turn the previous local uniform convergence in \eqref{ryw235w4} into the convergence of solutions of the heat equation. Indeed, in the case $\mu(M)=+\infty$, since $H_M(\cdot,p,t)$ converges uniformly to zero we get (by dominated convergence)
   \begin{equation*}
       e^{t\Delta}f(y)=\int_M H_M(x,y,t)f(x)d\mu(x)\to 0\quad{\rm as}\;{t\to\infty} \,,
   \end{equation*}
   for every $y\in M$ and $f\in L^1(M)$.
\end{remark}

\section{Proof of the main results}\label{section3}

First, we shall briefly comment on the following quantity
\begin{equation*}
    \alpha(E) = \lim_{s \to 0^+} s \int_{E \setminus B_1(0)} \frac{1}{|y|^{n+s} } \, dy ,
\end{equation*}
introduced by Dipierro, Figalli, Palatucci, and Valdinoci in \cite{dpfv} as a measure of the behavior of the set $E$ near infinity, and which is (up to a dimensional constant) the limit in \eqref{volinf M} in the case $M=\R^n$ with its standard metric. This quantity is invariant by rescaling of $E$ and, at first, can be thought as a measure of "how conical" is $E$ near infinity. Indeed, if the blow-down $ E/\lambda$ converges in $L^1_{\rm loc} (\R^n)$ to a regular cone $E_\infty$ as $\lambda \to \infty$, then $\alpha(E) = \mathcal{H}^{n-1} (E_\infty \cap \Sp^{n-1})$. Nevertheless, the fact that this limit exists in not equivalent to having a conical blow-down. Indeed, one can easily construct examples where the limit in $\alpha(E)$ exists but the blow-downs of $E$ converge to two different cones along two different subsequences. 

\vsp
Finally, the authors in \cite{dpfv} refer to $\alpha(E)$ as the weighted volume towards infinity of the set $E$; however in light of our results and description, it would be more appropriate to call this quantity $\emph{heat density over E}$. Indeed, $\alpha(E)$ represents the fraction of heat kernel that flows through the set towards infinity (this explains why $\theta_M \equiv 1$ on stochastically complete manifolds). 

\vsp
Because of this intuitive reason, the limit in the definition of $\alpha(E)$ needs not to exist in general if $E$, for example, oscillates between two cones near infinity. See \cite[Example 2.8]{dpfv} for the construction of such an example.  

\vsp
On a Riemannian manifold, a similar quantity is needed but, since no canonical origin (as in $\R^n$) is present, the singular kernel $1/|y|^{n+s}$ has to be replaced with $\mathcal{K}_s(y,p)$ and it has to be proved if and when the limit \eqref{eq: theta limit def} becomes independent of $p \in M$. On Riemannian manifolds, this property of the limit being independent of the base point $p$ turns out to be quite delicate and, as a consequence of Theorem \ref{mainrandom0}, we will see that is implied by the $L^\infty-{\rm Liouville}$ property of Definition \ref{LinfLiouville}. 

\begin{definition}[Heat density of a set]
     Let $E\subset M$ be a measurable set with $P_{s_\circ}(E, \Omega) <+\infty$ for some $s_\circ \in (0,1)$. We define, for every $p \in M$ and $R>0$, the \emph{heat density of $E$} as the following limit
     \begin{equation*}
         \theta_{E}(p,R):=\lim_{s\to 0}  \int_{E \setminus B_R(p)} \mathcal{K}_s(x,p) d\mu(x) \,,
     \end{equation*}
     when it exists. At this level, this may depend on $p$ and $R$.
 \end{definition}
 
Note that, at this point, it is not even clear whether the limit \eqref{volinf M} of the heat density $\theta_M$ of the whole $M$ exists or is different from zero. For example, as a consequence of the proof of Theorem \ref{mainnoncompact}, if there were complete Riemannian manifolds with $\mu(M)=+\infty$ and $\theta_M \neq 1$, then we would see the asymptotic 
\begin{equation*}
        \lim_{s\to 0^+} \frac{1}{2} P_s(E, \Omega ) = (\theta_M-\theta_E) \mu(E\cap \Omega) + \theta_E \mu(E^c \cap \Omega) 
    \end{equation*}
holding (even when $\theta_M \neq 1$ ), and if $\theta_M=0$ this would mean that there are Riemannian manifolds where the asymptotic of the fractional $s$-perimeter of any set $E$ is zero. These type of Riemannian manifolds exist; since $\theta_M \neq 1$ in this case, they are not stochastically complete. We will describe such a manifold in Example \ref{exampleshit}.

\vsp
Now, we show that this does not happen if $M$ is stochastically complete: the limit \eqref{volinf M} always exists, and it is equal to one. Actually, more is true: if there is a point $p\in M$ for which the limit is $1$, then the manifold is stochastically complete. Indeed, this is the statement of Proposition \ref{limexistence} that we now prove.

\begin{proof}[Proof of Proposition \ref{limexistence}]
Note that since $\mu(M)=+\infty$ we have $\mu(M\setminus B_1(p))>0$. We want to compute the following
   \begin{equation*}
       \lim_{s \to 0^+} \frac{s}{2}\int_{M \setminus B_1(p)} \int_0^\infty H_M(x,p,t)\frac{dt}{t^{1+s/2}} d\mu(x).
   \end{equation*}

   \vsp\noindent
   \textbf{Claim 1.} There holds
    \begin{equation*}
        \lim_{s\to 0^+}\frac{s}{2}\int_{M\setminus B_1(p)}\int_0^1H_M(x,p,t)\frac{dt}{t^{1+s/2}}d\mu(x)=0.
    \end{equation*}
    Indeed, this directly follows by writing 
    \begin{equation*}
       \lim_{s\to 0^+}\frac{s}{2}\int_{M\setminus B_1(p)}\int_0^1H_M(x,p,t)\frac{dt}{t^{1+s/2}}d\mu(x) = \lim_{s\to 0^+}\frac{s}{2}\int_0^1 e^{t\Delta}(\chi_{M\setminus B_r(p)})(p)\frac{dt}{t^{1+s/2}} 
    \end{equation*}
   and exploiting the estimate of Lemma \ref{convergence}. 
    
    \vsp\noindent 
    \textbf{Claim 2.} There holds

\begin{equation}\label{cdscdscd}
     \lim_{s \to 0^+} \frac{s}{2}\int_{B_1(p)} \int_1^\infty H_M(x,p,t)\frac{dt}{t^{1+s/2}} d\mu(x)=0.
\end{equation}
 By the uniform convergence of the heat kernel to zero (in particular, by the result contained in Remark \ref{heatsolconv}) we get that $e^{t\Delta}(\chi_{B_1(p)})(p)\to 0$ as $t\to\infty$. Therefore, for all $\varepsilon>0$ there exists $T=T(\varepsilon)$ such that $e^{t\Delta}(\chi_{B_1(p)})(p)\leq\varepsilon$ for all $t\geq T$, whence
 \begin{equation*}
     \limsup_{s \to 0^+} \frac{s}{2} \int_1^\infty e^{t\Delta}(\chi_{B_1(p)})(p)\frac{dt}{t^{1+s/2}} d\mu(x) \leq \lim_{s\to 0}\frac{s}{2} \int_1^T\frac{dt}{t^{1+s/2}}+ \varepsilon\limsup_{s\to 0}\frac{s}{2}\int_T^\infty\frac{dt}{t^{1+s/2}}\leq\varepsilon \,,
 \end{equation*}
 for all $\varepsilon>0$, proving the second claim.

\vsp
Now, thanks to the first claim we can reduce ourselves to computing
\begin{equation*}
      \lim_{s \to 0^+} \frac{s}{2}\int_{M\setminus B_1(p)} \int_1^\infty H_M(x,p,t)\frac{dt}{t^{1+s/2}} d\mu(x).
\end{equation*}
Then we can then add \eqref{cdscdscd} to the previous limit, which gives zero contribution, and we end up with
\begin{equation*}
    \lim_{s \to 0^+} \frac{s}{2}\int_{M} \int_1^\infty H_M(x,p,t)\frac{dt}{t^{1+s/2}} d\mu(x).
\end{equation*}
Using Fubini and the stochastical completeness of $M$ we get
\begin{equation*}
     \lim_{s \to 0^+} \frac{s}{2}\int_{M} \int_1^\infty H_M(x,p,t)\frac{dt}{t^{1+s/2}} d\mu(x) =  \lim_{s \to 0^+} \frac{s}{2} \int_1^\infty \frac{dt}{t^{1+s/2}} d\mu(x) =1\,,
\end{equation*}
and this concludes the proof. 

\vsp
Conversely assume that \eqref{Stoccompl} holds, then since both the previous claims hold on any connected and geodesically complete Riemannian manifold we have 
\begin{equation*}
    \lim_{s\to 0^+}\frac{s}{2}\int_{M }\int_1^\infty H_M(x,p,t)\frac{dt}{t^{1+s/2}}d\mu(x)=1.
\end{equation*}
Setting $ \mathcal{M} (t,p)=\int_M H_M(x,p,t)d\mu(x)\leq 1$ we can infer that, for every $T>0$
\begin{equation*}
    1=\lim_{s\to 0}\frac{s}{2}\int_T^\infty\frac{\mathcal{M}(t,p)}{t^{1+s/2}}dt\leq \lim_{s\to 0}\frac{s}{2}\int_T^\infty\frac{1}{t^{1+s/2}}dt=1 \,.
\end{equation*}

Now, assume by contradiction that $M$ is not stochastically complete. Then since $\mathcal{M}(t,p)$ is nonincreasing in time and nonnegative, there holds $\lim_{t\to\infty} \mathcal{M}(t,p) \le 1-\delta $ for some $\delta>0$, and we would have $\mathcal{M}(t,p)\leq 1-\delta/2$ for every $t\geq T=T(\delta)$. This gives
\begin{equation*}
    1=\lim_{s\to 0}\frac{s}{2}\int_T^\infty\frac{\mathcal{M}(t,p)}{t^{1+s/2}}dt\leq \lim_{s\to 0}\frac{s}{2}\int_T^\infty\frac{1-\delta/2}{t^{1+s/2}}dt=1-\delta/2 \,,
\end{equation*}
reaching a contradiction, hence $\lim_{t\to\infty}\mathcal{M}(t,p)=1$ and thanks to Lemma \eqref{Mass decay} we conclude.
\end{proof}

\begin{remark}\label{adfs} Following the proof of Proposition \ref{limexistence}, one can see a clear picture of what happens to the limit in $\theta_M(p)$ even when $M$ is not stochastically complete. Indeed, for every Riemannian manifold (not necessarily stochastically complete) and $p\in M$, the limit $\lim_{t\to \infty } \mathcal{M}(t,p)$ exists. This follows from the fact that $\mathcal{M}(\cdot, p)$ is nonincreasing and nonnegative; see Lemma \ref{Mass decay}. Since 
\begin{equation*}
    \mathcal{M}(t,p) = \int_M H_M(p,x,t)\, d\mu(x) = e^{t\Delta} 1
\end{equation*}
is a solution to the heat equation starting from the function equal to one; it follows from the proof above and from standard parabolic estimates that $\mathcal{M}(t,\cdot ) \to \theta_{M}  $ in $C^2_{\rm loc}(M) $ as $t\to \infty$, where $ \theta_{M}: M \to \R $ is a bounded, nonnegative harmonic function on $M$. Therefore:
\begin{itemize}
    \item[(i)] If $M$ is stochastically complete, we have $ \theta_{M} \equiv 1$ (in particular, the value of $\theta_{M}$ does not depend on the point), and the proof above shows $\theta_M=1$. 
    
     \item[(ii)] If $M$ is not stochastically complete but satisfies the $L^\infty-{\rm Liouville}$ property (see Definition \ref{LinfLiouville}) we know that $ \theta_{M} \equiv \theta_\circ \in [0,1) $ and, following the proof of the proposition, one finds that the limit in the definition of $\theta_M$ exists, does not depend on the point $p$ and there holds $\theta_M=\theta_\circ$. Note that such Riemannian manifolds exist and were first constructed in \cite{PinYehu95}. In Example \ref{exampleshit}, we describe one with $\theta_\circ=0$. 
     
     \item[(iii)] If $M$ is not stochastically complete and does not satisfy the $L^\infty-{\rm Liouville}$ property, then in general $ \theta_{M} $ is a nonconstant harmonic function on $M$, and the value of $\theta_M(p)$ can depend on the point $p$. 
\end{itemize}
\end{remark}

Now we are in the position to prove our first main result.
\begin{proof}[Proof of Theorem \ref{mainrandom0}]
    With no loss of generality assume $r<R$. First, we show that the limit does not depend on the radius, that is 
    \begin{equation*}
        \theta_E(p, R) = \theta_E(p,r) \,.
    \end{equation*}
    We have 
    \begin{align*}
    \bigg|  \int_{E \setminus B_R(p)} \mathcal{K}_s(x,p)& d\mu(x) -  \int_{E \setminus B_r(p)} \mathcal{K}_s(x,p) d\mu(x) \bigg| \le  \int_{B_R(p) \setminus B_r(p)} \mathcal{K}_s(x,p) d\mu(x) \\ & \le  C s \int_{B_R(p) \setminus B_r(p)} \int_{0}^1 H_M(x,p,t) \frac{dt}{t^{1+s/2}}d\mu(x) \\ & + C s \int_{B_R(p) \setminus B_r(p)} \int_{1}^{\infty} H_M(x,p,t) \frac{dt}{t^{1+s/2}}d\mu(x)  =: I_1+I_2 \,.
\end{align*} 

For the first integral, by Lemma \ref{convergence} as $s\to 0^+$
\begin{equation*} 
    I_1 \le C s \int_{0}^1 e^{t\Delta}(\chi_{M\setminus B_r(p)})(p) \frac{dt}{t^{1+s/2}}  \le Cs \int_{0}^1 \frac{e^{-c/t}}{t^{1+s}} \, dt \to 0 \,.
\end{equation*}

Regarding the second integral, for all $\ep >0$ by Lemma \ref{gryg} there is $T=T(\ep)>0$ such that $|H_M(x,p,t)|\le \ep$ for all $x\in B_R(p)$ and $t\ge T$, hence 
\begin{align*}
    I_2 & \le Cs \int_{1}^T \int_{B_R(p)} H_M(x,p,t) d\mu(x)\frac{dt}{t^{1+s/2}}+  Cs \int_{T}^\infty \int_{B_R(p)} H_M(x,p,t) d\mu(x) \frac{dt}{t^{1+s/2}} \\&\le Cs\int_{1}^T \frac{dt}{t^{1+s/2}} + Cs \ep  \mu(B_R(p))\int_T^{\infty} \frac{dt}{t^{1+s/2}}\\ & = C (1-T^{-s/2}) +C\ep \mu(B_R(p)) T^{-s/2} \,,
\end{align*}
    letting $s\to 0^+$ (and then $\ep \to 0$) gives $I_2 \to 0$. Hence, taking $s\to 0^+$ shows $\theta_E(p, R) = \theta_E(p,r)$, showing that the limit never depends on the radius. Note that what we have just proved already implies that if $E$ is bounded then the limit exists and $\theta_E=0$, since one can just take $R\gg 1$ so that $E\setminus B_R(p) = \varnothing $. 

    \vsp
    Now fix $q \in M$. For every $p \in B_{1/2}(q)$ we can write
    \begin{equation*}
        \theta_{E}(p)=\lim_{s\to 0^+}\int_{E \setminus B_1(q)} \mathcal{K}_s(x,p)d\mu(x).
    \end{equation*}
    This is possible because we always have independence on the radius. Indeed 
    \begin{equation*}
        \bigg| \int_{E \setminus B_{1/2}(p)}  \mathcal{K}_s(x,p) d\mu(x) -  \int_{E \setminus B_1(q)} \mathcal{K}_s(x,p) d\mu(x) \bigg| \le \int_{B_1(q) \setminus B_{1/2}(p)} \mathcal{K}_s(x,p) \, d\mu(x) \,,
    \end{equation*}
    hence 
    \begin{equation*}
       \limsup_{s\to 0^+} \bigg| \int_{E \setminus B_{1/2}(p)}  \mathcal{K}_s(x,p) d\mu(x) -  \int_{E \setminus B_1(q)} \mathcal{K}_s(x,p) d\mu(x) \bigg| \le \theta_{B_1(q)} =0 \,.
    \end{equation*}
    
   Now set 
    \begin{equation}\label{eq: big theta def}
        \Theta_{E,s}(p) := \frac{s}{2}\int_0^\infty e^{t\Delta}(\chi_{E \setminus B_1(q)})(p)\frac{dt}{t^{1+s/2}} \,,
    \end{equation}
    so that $\theta_E(p) = \lim_{s\to 0^+}  \Theta_{E,s}(p)$.  By Lemma \ref{convergence} we have that that $0 \leq  \Theta_{E,s}(p) \leq C$, for some constant $C>0$ depending only on $M$. Now fix $\varphi\in C_c^\infty(B_{1/2}(q))$, by dominated convergence
     \begin{align}\label{eq: proof main thm 1}
         \int_M\theta_E ( \Delta\varphi ) \, d\mu = \lim_{s\to 0^+}\int_M  \Theta_{E,s} ( \Delta\varphi )  \, d\mu =\lim_{s\to 0^+}\int_M (\Delta  \Theta_{E,s} ) \varphi \, d\mu \,.
     \end{align}
     
     Note that, for fixed $s$ and $p\in B_{1/2}(q)$, we can write
     \begin{equation*}
         \Delta  \Theta_{E,s}(p) =\frac{s}{2}\int_0^\infty \Delta e^{t\Delta}(\chi_{E \setminus B_1(q)})(p)\frac{dt}{t^{1+s/2}}=\frac{s}{2}\int_0^\infty \partial_t e^{t\Delta}(\chi_{E \setminus B_1(q)})(p)\frac{dt}{t^{1+s/2}},
     \end{equation*}
     which, after integration by parts, becomes (note that the boundary term at $t=0^+$ is zero due to Lemma \ref{convergence}) equal to
     \begin{equation*}
         \frac{s}{2} \int_0^\infty e^{t\Delta}(\chi_{E \setminus B_1(q)})(p)\frac{(1+s/2)}{t^{2+s/2}} \, dt \,.
     \end{equation*}
     
     The latter quantity goes to $0$ as $s\to 0^+$, and is uniformly bounded for $s\in (0,1)$, for every $p\in B_{1/2}(q)$. Hence, going back to \eqref{eq: proof main thm 1} we get
     \begin{equation*}
         \int_M\theta_E ( \Delta\varphi ) \, d\mu = 0 \,.
     \end{equation*}
That is, $\theta_E \in L^1_{\rm loc}(M)$ is a very weak solution of $\Delta \theta_E =0$. We're left to prove that $\theta_E$ is smooth and is a classical solution of $\Delta \theta_E=0$. 

In a small chart, in coordinates, one can see that $u$ is (locally) a very weak solution of $\partial_i \big( \sqrt{|{\rm det}(g)|} g^{ij} \partial_j \theta_E \big) =0$. Choosing the chart sufficiently small, we get that the coefficients $\sqrt{|{\rm det}(g)|} g^{ij}$ are smooth and uniformly elliptic. Then, for example by \cite[Theorem 1.3]{reg-very-weak}, we get that $\theta_E \in W^{2,2}_{\rm loc}(M)$ and bootstrapping classical elliptic regularity gives that $\theta_E$ is smooth and harmonic. 

\vsp
     Lastly, \eqref{argwer} follows from the last part of the proof of Proposition \ref{limexistence}, and the fact that $p\mapsto \theta_M(p)$ is harmonic is verbatim the proof we did for $E \subset M$ above.
\end{proof}

Note that, according to Theorem \ref{mainrandom0}, if $M$ possesses the $L^\infty-{\rm Liouville}$ property, then $\theta_E$ is constant for every set $E$ for which it exists. A natural question to ask would be whether some type of converse is true. However, we have not been able to prove or disprove such a statement. We leave this as an open question, and we would be happy to know the answer:

\begin{tcolorbox}[colback=blue!3!white]
   \begin{question} Let $(M,g)$ be a complete Riemannian manifold with $\mu(M)=+\infty$ and with the following property: for every set $E\subset M$ for which $\theta_E$ exists (see \eqref{eq: theta limit def}), $\theta_E$ is constant. 

\vsp \center 
Is it true that $M$ satisfies the $L^\infty-{\rm Liouville}$ property?
\end{question}
\end{tcolorbox}

\vsp
Now we turn to the proof of Theorem \ref{mainrandom1}. To prove this result, we will need Lemma \ref{porcatroia}, which essentially says that for manifolds with $\mu(M)=+\infty$, the singular kernel $\mathcal{K}_s$ locally behaves like that of $\R^n$ as $s\to 0^+$. This is not the case for finite volume manifolds\footnote{Indeed, for finite volume manifolds, the same conclusion \eqref{asrtqw4} holds with constants depending on $s$, but as $s\to 0^+$ the constants do not behave like the ones of $\R^n$.}. Recall the notation of Remark \ref{constantinRn}, where we denote by $\frac{\beta_{n,s}}{|x-y|^{n+s}}$ the singular kernel of $\R^n$ with its standard metric. Note also that $ cs(2-s) \le \beta_{n,s} \le Cs(2-s)$ for some dimensional $c,C>0$. 

\vsp 
The following lemma is a sharpening of \cite[Lemma 2.19]{CFS23} for manifolds with infinite volume. Indeed, in \cite{CFS23}, the authors are not interested in characterizing the sharp dependence from $s$ of $\mathcal{K}_s$ as $s\to 0^+$. Moreover, in \cite{CFS23}, the authors estimate $\mathcal{K}_s$ locally on every complete Riemannian manifold $M$ (both with finite and infinite volume), but the result stated in Lemma \ref{porcatroia} is not true on manifolds with finite volume. 

\begin{lemma}\label{porcatroia}
    Let $(M,g)$ be a complete $n$-dimensional Riemannian manifold with $\mu(M)=+\infty$, and let $p \in M$. Assume that in normal coordinates at $p$ there holds $ \frac{99}{100} |v|^2\le g_{ij}(q)v^iv^j\le \tfrac{101}{100} |v|^2$ and $|\nabla g_{ij}(q)|\le 1/100$ for all $ v \in \R^n$ and $q \in B_1(p)$. Then there exists $\mathcal{K}_s' : B_1(p) \times B_1(p) \to [0,\infty)$ such that 
    \begin{equation*}
        \lim_{s\to 0^+} \sup_{x,y \in B_{1/8}(p) } \left| \mathcal{K}_s(x,y) - \mathcal{K}_s'(x,y) \right|  =0 \,,
    \end{equation*}
    and for all $x,y \in B_{1/8}(p)$
    \begin{equation}\label{asrtqw4}
    c \frac{ \beta_{n,s}}{d(x,y)^{n+s}} \le \mathcal{K}_s'(x,y) \le C \frac{ \beta_{n,s}} {d(x,y)^{n+s}} \,,
\end{equation}
for some dimensional constants $c,C>0$. 
\end{lemma}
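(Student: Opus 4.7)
The natural candidate for the comparable kernel is the truncation at time $1$:
\begin{equation*}
\mathcal{K}_s'(x,y) := \frac{1}{|\Gamma(-s/2)|} \int_0^1 H_M(x,y,t)\, \frac{dt}{t^{1+s/2}},
\end{equation*}
so that $\mathcal{K}_s(x,y) - \mathcal{K}_s'(x,y)$ is precisely the tail integral over $[1,\infty)$, while $\mathcal{K}_s'$ captures the short-time (hence Euclidean-like) behavior of $H_M$ near $p$.

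The first step is the uniform convergence $\sup_{x,y \in B_{1/8}(p)} |\mathcal{K}_s - \mathcal{K}_s'| \to 0$. Given $\epsilon > 0$, by Lemma \ref{gryg} part $(ii)$ (which is where the hypothesis $\mu(M)=+\infty$ enters decisively), there exists $T = T(\epsilon) > 1$ such that $\sup_{x,y \in B_{1/8}(p)} H_M(x,y,t) \le \epsilon$ for all $t \ge T$. Setting $M_T := \sup_{x,y \in \overline{B_{1/8}(p)},\, t \in [1,T]} H_M(x,y,t) < +\infty$ (finite by continuity on the compact set), one estimates
\begin{equation*}
\int_1^\infty H_M(x,y,t)\, \frac{dt}{t^{1+s/2}} \le M_T \cdot \frac{1 - T^{-s/2}}{s/2} + \epsilon \cdot \frac{T^{-s/2}}{s/2}.
\end{equation*}
Multiplying by $\frac{1}{|\Gamma(-s/2)|} = \frac{s/2}{\Gamma(1-s/2)}$ and sending $s \to 0^+$ yields $\limsup_{s\to 0^+} \sup_{x,y \in B_{1/8}(p)} |\mathcal{K}_s - \mathcal{K}_s'| \le \epsilon$, and since $\epsilon$ is arbitrary the limit is zero.

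The second step is the two-sided bound on $\mathcal{K}_s'$. The key ingredient is a pair of Gaussian bounds for $H_M$ on $B_{1/8}(p) \times B_{1/8}(p) \times (0,1]$ of the form
\begin{equation*}
\frac{a_1}{t^{n/2}}\, e^{-A_1 d(x,y)^2/t} \le H_M(x,y,t) \le \frac{A_2}{t^{n/2}}\, e^{-a_2 d(x,y)^2/t}.
\end{equation*}
Given these, the change of variable $u = \alpha d(x,y)^2/t$ yields
\begin{equation*}
\int_0^\infty \frac{e^{-\alpha d(x,y)^2/t}}{t^{n/2+1+s/2}}\, dt = \frac{\Gamma(n/2 + s/2)}{\alpha^{n/2+s/2}\, d(x,y)^{n+s}},
\end{equation*}
giving immediately $\mathcal{K}_s'(x,y) \le C\, \beta_{n,s}/d(x,y)^{n+s}$ after matching constants via $\frac{s/2}{\Gamma(1-s/2)}\Gamma(n/2+s/2)$, which is comparable to $\beta_{n,s}$ (up to a dimensional constant depending only on $\pi^{n/2}$) for $s$ small. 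For the lower bound, the same substitution applied to the truncated integral gives
\begin{equation*}
\int_0^1 \frac{e^{-A_1 d(x,y)^2/t}}{t^{n/2+1+s/2}}\, dt = \frac{1}{(A_1 d(x,y)^2)^{n/2+s/2}} \int_{A_1 d(x,y)^2}^{\infty} u^{n/2+s/2-1} e^{-u}\, du,
\end{equation*}
and since $d(x,y) \le 1/4$ the lower limit of the remaining incomplete-Gamma integral is bounded above by $A_1/16$, so the integral is bounded below by a positive dimensional constant uniformly for $s$ small, producing $\mathcal{K}_s'(x,y) \ge c\, \beta_{n,s}/d(x,y)^{n+s}$.

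The main obstacle is the Gaussian lower bound on $H_M$: on a general complete manifold such a bound is not automatic, while the upper Gaussian bound is rather standard (Davies' method). This is precisely where the near-Euclidean assumption $\|g_{ij} - \delta_{ij}\|, |\nabla g_{ij}| \le 1/100$ on the normal chart at $p$ is used: one compares $H_M$ from below with the Dirichlet heat kernel of the uniformly elliptic operator $\Delta_g$ on $B_1(p)$, and the latter satisfies Aronson-type Gaussian bounds on $B_{1/8}(p) \times B_{1/8}(p) \times (0,1]$ with constants close to the Euclidean ones; alternatively one can use a short-time parametrix/Minakshisundaram-Pleijel expansion on this almost-Euclidean region. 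Once these heat kernel bounds are in hand, the remainder of the argument reduces to the elementary Gamma-function computations above.
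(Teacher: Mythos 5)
Your proposal is correct in its logic but follows a genuinely different route from the paper. The paper introduces an auxiliary complete manifold $M' = (\R^n, g')$, where $g'$ coincides with $g$ on a small ball around $p$ (in normal coordinates) and is exactly Euclidean outside a slightly larger ball; the comparison kernel $\mathcal{K}_s'$ is then the full singular kernel of $M'$. The uniform closeness $\sup_{B_{1/8}(p)^2}|\mathcal{K}_s - \mathcal{K}_s'| \to 0$ is obtained by splitting the time integral at $t=1$ and $t=1/s$: on $(0,1]$ the heat kernels of $M$ and $M'$ agree up to an $e^{-c/t}$ error (a local short-time comparison from \cite{CFS23}), on $[1,1/s]$ both kernels are bounded by local suprema and the factor $s(1-s^{s/2})$ kills the contribution, and on $[1/s,\infty)$ both kernels vanish because both manifolds have infinite volume. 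The two-sided bound $c\beta_{n,s}/d^{n+s}\le \mathcal{K}_s' \le C\beta_{n,s}/d^{n+s}$ is then imported wholesale from \cite[Lemma 2.5]{CFS23}, applied to the explicitly constructed $M'$. You instead take $\mathcal{K}_s'$ to be the short-time truncation $\frac{1}{|\Gamma(-s/2)|}\int_0^1 H_M(\cdot,\cdot,t)\,dt/t^{1+s/2}$ of the kernel of $M$ itself, which lets you prove the uniform closeness directly from part $(ii)$ of Lemma \ref{gryg} with the same $[1,T]$, $[T,\infty)$ splitting; this step is clean and fully rigorous. The two-sided estimate is then derived from hand by a Gamma-function computation, assuming two-sided Gaussian bounds $a_1 t^{-n/2}e^{-A_1 d^2/t}\le H_M(x,y,t)\le A_2 t^{-n/2}e^{-a_2 d^2/t}$ on $B_{1/8}(p)^2\times(0,1]$ with constants depending only on $n$. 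Your Gamma-function computations and constant matching against $\beta_{n,s}=\frac{s2^{s-1}\Gamma(\frac{n+s}{2})}{\pi^{n/2}\Gamma(1-s/2)}$ are correct.

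The real content of the lemma, under both approaches, is precisely the step you flag as the main obstacle. The paper outsources it to \cite[Lemma 2.5, 2.10]{CFS23}; you outsource it to a local two-sided Gaussian bound for $H_M$, which is true but not entirely elementary. A few cautions on what you actually need there: the lower bound $H_M\ge H^D_{B_1(p)}$ plus parametrix/Aronson-type estimates for the Dirichlet kernel in the nearly Euclidean chart does give $H_M(x,y,t)\ge a_1 t^{-n/2}e^{-A_1 d^2/t}$ for $x,y\in B_{1/8}(p)$, $t\in(0,1]$, but for $t$ of order $1$ the constants are not ``close to Euclidean'' — the Dirichlet kernel carries a spectral-gap factor $\sim e^{-\lambda_1(B_1)}$, and you should only claim dimensional constants. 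On the upper side, ``Davies' method'' as usually stated needs global hypotheses; what you actually want is the local on-diagonal bound coming from a local Faber--Krahn inequality in $B_1(p)$ (Grigor'yan's localization), which does give $H_M(x,y,t)\le A_2 t^{-n/2}e^{-a_2 d^2/t}$ on $B_{1/8}(p)^2\times(0,1]$ with constants controlled by the metric assumption — the point being that this is not automatic from ``local elliptic regularity'' alone, since $H_M$ could in principle pick up contributions from the geometry of $M$ far from $p$. Once these two estimates are stated with their correct provenance, your argument is a complete and self-contained alternative to the paper's, avoiding the auxiliary manifold $M'$ entirely at the cost of invoking the local two-sided Gaussian bound directly.
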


We postpone the proof of Lemma \ref{porcatroia} to \autoref{hker est sbs} in the Appendix.

\begin{proof}[Proof of Theorem \ref{mainrandom1}]  
As we can assume $s<s_\circ/2$, it follows from the proof of Proposition \ref{coincideprop2} that the integral in $(-\Delta)^{s/2}_{\rm Si} u$ is absolutely convergent\footnote{Here we are not assuming $M$ being stochastically complete, but in Proposition \ref{coincideprop2} stochastical completeness is only used to have that $(-\Delta)^{s/2}_{\rm B} u = (-\Delta)^{s/2}_{\rm Si} u $ a.e., not to show the absolute convergence of the integrals.} for a.e. $x\in M$, and the principal value is not needed. Moreover, since $u\in H^{s_\circ/2}(M)$ we have 
\begin{equation*}
    \int_{M} (u(x)-u(y))^2 \mathcal{K}_{s_\circ}(x,y) \, d\mu(y) <+\infty 
\end{equation*}
for a.e. $x\in M$. 
Fix $x \in M$ in the intersection of these two sets of full measure, and take $R$\ such that $\supp(u) \subset B_R(x) $. Then 
\begin{align}\nonumber
    (-\Delta)^{s/2}_{\rm Si} u (x) & = \int_M (u(x)-u(y))\mathcal{K}_s(x,y) \, d\mu(y) \\\label{sdafsrf} &= \int_{B_R(x)} (u(x)-u(y))\mathcal{K}_s(x,y) \, d\mu(y) + u(x) \int_{M \setminus B_R(x)} \mathcal{K}_s(x,y) \, d\mu(y) \,.
\end{align}
Note that being $\mu(M)=+\infty$ we have
\begin{equation*}
    \int_{M \setminus B_R(x)} \mathcal{K}_s(x,y) \, d\mu(y) \neq 0 \,.
\end{equation*}
\textbf{Claim.} As $s \to 0^+$ there holds
\begin{equation*}
    \lim_{s\to 0^+} \int_{B_R(x)} (u(x)-u(y))\mathcal{K}_s(x,y) \, d\mu(y) =0\,.
\end{equation*}
Indeed, let $\rho \ll 1$ small that will be chosen later. We denote here by $C$ a constant which does not depend on $s$. Then 
\begin{align*}
   \bigg| \int_{B_R(x)} & (u(x)-u(y)) \mathcal{K}_s(x,y) \, d\mu(y) \bigg| \\ & =  \bigg| \int_{B_{\rho}(x)} (u(x)-u(y))\mathcal{K}_s(x,y) \, d\mu(y) + \int_{B_R(x) \setminus B_{\rho}(x) } (u(x)-u(y))\mathcal{K}_s(x,y) \, d\mu(y) \bigg| \\ & \le  \int_{ B_\rho(x)} |u(x)-u(y)| \mathcal{K}_s(x,y) \, d\mu(y) + 2 \| u\|_{L^\infty} \int_{B_R(x) \setminus B_{\rho}(x)} \mathcal{K}_s(x,y) \, d\mu(y) \,.
\end{align*}
We estimate these two integrals separately. Let $\mathcal{K}_s'$ be the singular kernel given by Lemma \ref{porcatroia}, applied with $\rho$ sufficiently small and suitably rescaled. For the first integral, Lemma \ref{porcatroia} gives
\begin{equation}\label{aethjdty}
    \limsup_{s\to 0^+} \int_{B_\rho(x)} |u(x)-u(y)| \big( \mathcal{K}_s(x,y) - \mathcal{K}_s'(x,y) \big) \, d\mu(y) = 0 \,.
\end{equation}
Moreover, by the bounds of Lemma \ref{porcatroia} and since $u\in H^{s_\circ/2}(M)$, for a.e. $x\in M$
\begin{equation*}
    \int_{B_\rho(x)} \frac{(u(x)-u(y))^2}{d(x,y)^{n+s_\circ} } \, dy \le C(s_\circ) \int_{B_\rho(x)} (u(x)-u(y))^2 \mathcal{K}_{s_\circ}(x,y) \, dy <+\infty.
\end{equation*}
Hence, by Lemma \ref{porcatroia} again and Holder's inequality
\begin{align*}
     \int_{B_\rho(x)} & |u(x)-u(y)|\mathcal{K}'_s(x,y) \, d\mu(y) \le Cs \int_{B_\rho(x)} \frac{|u(x)-u(y)|}{d(x,y)^{n+s}} \, d\mu(y) \\ & \le Cs \left( \int_{B_\rho(x)} \frac{(u(x)-u(y))^2}{d(x,y)^{n+s_\circ} } \, dy \right)^{1/2} \left( \int_{B_\rho(x)} \frac{1}{d(x,y)^{n+2s-s_\circ} dy} \right)^{1/2} \\ & \le Cs \left( \frac{\rho^{s_\circ-2s}}{s_\circ-2s} \right)^{1/2} \to 0 \,,
\end{align*}
as $s\to 0^+$, where in the second-last inequality we have used polar coordinates for $\rho$ sufficiently small (possibly depending on $x$). Thus, with \eqref{aethjdty} we have that the first integral tends to zero. 

\vsp
Regarding the second integral, one can note that we have proved in part $(i)$ of Theorem \ref{mainrandom0} that, for every $x\in M$ and $r,R>0$
\begin{equation*}
   \lim_{s\to 0^+} \int_{B_R(x) \setminus B_r(x)} \mathcal{K}_s(x,y) d\mu(y) =0\,,
\end{equation*}
since $B_R(x)$ is a bounded set, and this concludes the proof of the claim.

\vsp
 Moreover, by the very definition of $\theta_M$ we have
\begin{align}\label{thetaMeqn}
    \lim_{s\to 0^+}  \int_{M \setminus B_R(x)} \mathcal{K}_s(x,y) \, d\mu(y)  =\theta_M(x) \,, 
\end{align}
hence letting $s\to 0^+$ in \eqref{sdafsrf} gives
\begin{equation*}
    \lim_{s\to 0^+} (-\Delta)^{s/2}_{\rm Si} u (x) = \theta_M(x)  u(x) \,,
\end{equation*}
for a.e. $x\in M$, and this concludes the proof. 
\end{proof}

To prove our result Theorem \ref{maincompact} on the asymptotics for infinite volume, one needs also to know the asymptotics as $s\to 0^+$ of the fractional $s$-perimeter on the entire $M$, that is when $\Omega \equiv M$. This is addressed by Theorem \ref{globlimnoncompact} below on the asymptotics of the fractional Sobolev seminorms. This result is the counterpart of Theorem \ref{globlimcomp} in the case of infinite volume. 

\begin{theorem}\label{globlimnoncompact}  Let $(M,g)$ be a complete Riemannian manifold with $\mu(M)=+\infty$, and let $s_\circ\in (0,1)$. Then, for every $u \in H^{s_\circ/2}(M) \cap L^\infty(M)$ with bounded support there holds
    \begin{equation*}
         \lim_{s \to 0^+} \frac{1}{2} [u]_{H^{s/2}(M)}^2 =  \int_M u^2 \theta_M \, d\mu \,.
     \end{equation*}
\end{theorem}
\begin{proof}
    Formally, one would like to infer that 
    \begin{align*}
        \frac{1}{2} [u]_{H^{s/2}(M)}^2 & := \frac{1}{2} \iint_{M\times M} (u(x)-u(y))^2 \mathcal{K}_{s}(x,y) \, d\mu(x) d\mu(y) \\ & = \int_M u (-\Delta)^{s/2}_{\rm Si} u \, d\mu \conv{s\to 0^+} \int_M u^2 \theta_M \, d\mu \,,
    \end{align*}
    where the first equality is the very definition of the seminorm. The second inequality is nontrivial since the integrals one would write in the few lines of a proof are not absolutely convergent in general. Moreover, for the last step of taking the limit as $s\to 0^+$ one needs to show that the a.e. convergence $(-\Delta)^{s/2}_{\rm Si} u \to \theta_M u $ of Theorem \ref{mainrandom1} can be upgraded to weak convergence in $L^2(M)$. Now we shall justify both steps.
    
\vsp
\textbf{Step 1.} We have 
\begin{equation}\label{vbnvbn}
    \frac{1}{2} \iint_{M\times M} (u(x)-u(y))^2 \mathcal{K}_{s}(x,y) \, d\mu(x) d\mu(y) = \int_M u (-\Delta)^{s/2}_{\rm Si} u \, d\mu \,.
\end{equation}
    Fix $\ep>0$ and let 
    \begin{equation*}
        (-\Delta)^{s/2}_\ep u (x) := \int_{M\setminus B_\ep(x)} (u(x)-u(y))\mathcal{K}_s(x,y) \, d\mu(y) \,.
    \end{equation*}
Let also $D:=\{(z,z) \, : \, z\in M \} $ denote the diagonal of $M\times M$ and $D_\delta$ a $\delta$-neighborhood of $D$. We have 
\begin{align*}
     \iint_{M\times M \setminus D_{\ep/\sqrt{2}}}   (u(x)-u(y))^2 &\mathcal{K}_{s}(x,y) \, d\mu(x) d\mu(y) \\ & = \iint_{M\times M \setminus D_{\ep/\sqrt{2}}} u(x)(u(x)-u(y))\mathcal{K}_{s}(x,y) \, d\mu(x) d\mu(y) \\ & \s \s - \iint_{M\times M \setminus D_{\ep/\sqrt{2}}} u(y)(u(x)-u(y))\mathcal{K}_{s}(x,y) \, d\mu(x) d\mu(y) \\ &= 2 \iint_{M\times M \setminus D_{\ep/\sqrt{2}}} u(x)(u(x)-u(y))\mathcal{K}_{s}(x,y) \, d\mu(x) d\mu(y) \\ &= 2 \int_M \int_{M\setminus B_{\ep}(x)} u(x)(u(x)-u(y))\mathcal{K}_{s}(x,y) \, d\mu(y) d\mu(x) \\ & = 2\int_{M} u (-\Delta)^{s/2}_{\ep} u \, d\mu \,,
\end{align*}
where splitting the integral and Fubini are justified since the integrals are absolutely convergent. Indeed 
    \begin{align*}
       & \int_M \int_{M\setminus B_{\ep}(x)} |u(x)(u(x)-u(y))|\mathcal{K}_{s}(x,y) \, d\mu(y) d\mu(x) \\ & \le \int_M |u(x)|^2 \int_{M\setminus B_{\ep}(x)} \mathcal{K}_{s}(x,y) \, d\mu(y) d\mu(x) + \int_M |u(x)| \int_{M\setminus B_{\ep}(x)} |u(y)|\mathcal{K}_{s}(x,y) \, d\mu(y) d\mu(x) \,,
    \end{align*}
    but by Lemma \ref{convergence} 
    \begin{align*}
        \int_{M\setminus B_{\ep}(x)} \mathcal{K}_{s}(x,y) \, d\mu(y) &= C \int_{0}^\infty \left( \int_{M\setminus B_{\ep}(x)} H_M(x,y,t) \, d\mu(y) \right)\frac{dt}{t^{1+s/2}} \\ & \le C \int_{0}^\infty \frac{e^{-c/t}}{t^{1+s/2}} dt \le C \,,
    \end{align*}
    for some $C$ depending on $s$ and $\ep$. Hence 
    \begin{equation*}
        \int_M \int_{M\setminus B_{\ep}(x)} |u(x)(u(x)-u(y))|\mathcal{K}_{s}(x,y) \, d\mu(y) d\mu(x) \le C(\|u \|_{L^\infty}, \mu(\supp(u)), \ep, s ) <+\infty \,,
    \end{equation*}
    and this shows the absolute convergence.

    \vsp
    Moreover, by Proposition \ref{coincideprop2} for a.e. $x\in M$ the integral in $(-\Delta)^{s/2}_{\rm Si} u$ is absolutely convergent, then 
    \begin{align*}
        \int_M \big|(-\Delta)^{s/2}_{\rm Si} u-(-\Delta)^{s/2}_{\ep} u \big|^2 d\mu \le \int_M \left| \int_{B_\ep(x)} |u(x)-u(y)|\mathcal{K}_s(x,y) d\mu(y) \right|^2 d\mu(x) \,,
    \end{align*}
    and the right hand side ternds to $0$ as $\ep \to 0$. Indeed, as $\ep\to 0 $, by the very same argument at the end of the proof of Theorem \ref{mainrandom1} there holds
    \begin{equation*}
        \int_{B_\ep(x)} |u(x)-u(y)|\mathcal{K}_s(x,y) d\mu(y) \to 0 \,,
    \end{equation*}
    for a.e. $x\in M$, and for $x$ fixed the convergence is monotone (decreasing) since the integrand is positive. Hence we have proved $(-\Delta)^{s/2}_{\ep} u \to (-\Delta)^{s/2}_{\rm Si} u $ in $L^2(M)$ as $\ep \to 0$. Now, letting $\ep \to 0$ in 
\begin{equation*}
   \frac{1}{2} \iint_{M\times M \setminus D_{\ep/\sqrt{2}}}   (u(x)-u(y))^2 \mathcal{K}_{s}(x,y) \, d\mu(x) d\mu(y) = \int_{M} u (-\Delta)^{s/2}_{\ep} u \, d\mu \,,
\end{equation*}
together with the monotone convergence theorem on the left-hand side, we get the equality of the seminorms and this completes the proof of Step 1. 
    
\vsp
\textbf{Step 2.} There holds
\begin{equation*}
    (-\Delta)^{s/2}_{\rm Si} u \rightharpoonup \theta_M u \s \textnormal{weakly in } L^2(M) \,.
\end{equation*}

The convergence a.e. is given by Theorem \ref{mainrandom1}. To prove that the convergence holds weakly in $L^2(M)$, we show that $(-\Delta)^{s/2}_{\rm Si} u$ is equibounded in $L^2(M)$. By \eqref{uqweriufgweur} there is $C$ depending only on $s_\circ$ such that
\begin{align*}
    \| (-\Delta)^{s/2}_{\rm Si} u \|_{L^2(M)}^2 \le  C \|u\|_{L^2(M)}^2 +  Cs^2\| u \|_{H^{s_\circ}(M)}^2 \,, 
\end{align*}
and hence
\begin{equation*}
    \limsup_{s\to 0^+} \| (-\Delta)^{s/2}_{\rm Si} u \|_{L^2(M)}^2 \le C \|u\|_{L^2(M)}^2 <+\infty \,.
\end{equation*}
This concludes Step 2 and, sending $s\to 0^+$ in \eqref{vbnvbn} concludes the proof.
\end{proof}
\begin{remark} Note that the equivalence of the seminorms \eqref{vbnvbn} always holds for characteristic functions, without any assumption. Indeed for every measurable $E\subset M$ 
    \begin{align*}
     2\int_M \chi_E \cdot  (-\Delta)^{s/2}_{\rm Si} \chi_E \, dx  & = 2\int_E \left( \lim_{\ep\to 0} \int_{M\setminus B_\ep(x)} (1-\chi_E(y)) \mathcal{K}_s(x,y) dy\right)   dx \\ &=  2\int_E \left( \lim_{\ep\to 0} \int_{(M\setminus B_\ep(x))\cap E^c} \mathcal{K}_s(x,y) dy\right)   dx \\ & = 2\int_E \int_{E^c} \mathcal{K}_s(x,y) dy =  [\chi_E]^2_{H^{s/2}(M)}  \,,
\end{align*}
where the second-last equality follows by the monotone convergence theorem.
\end{remark}

\begin{proof}[Proof of Theorem \ref{mainrandom2}]
     Since $M$ is stochastically complete, by Proposition \ref{coincideprop4} we have $H^{s_\circ/2}(M) \subset {\rm Dom}((-\Delta)^{s/2}_{\rm Spec})$. The equality a.e. of the fractional Laplacian, then follows by Proposition \ref{coincideprop2} and Proposition \ref{coincideprop1}. 

     \vsp
     To prove \eqref{rand1}, \eqref{rand2} one can argue similarly to the proof of Theorem \ref{globlimcomp}. Indeed, as $s\to 0^+$ for every $v\in L^2(M)$ we have
         \begin{align*}
             \lp(-\Delta)^{s/2}_{\rm Spec} u, v \rp = \int_{\sigma(-\Delta)} \lambda^{s/2} d \lp E_\lambda u , v  \rp \to \int_{\sigma(-\Delta)\setminus \{0\}} d \lp E_\lambda u , v  \rp = \lp u,v \rp - \lp E_0 u,v \rp \,,
         \end{align*}
         where $E_0$ is the projector onto the eigenspace of $-\Delta$ relative to the eigenvalue $\lambda=0$. By Theorem \ref{dfsdfs} every $L^2(M)$ harmonic function is constant, hence we have two cases: 
        \begin{itemize}
            \item[$(i)$] If $\mu(M)<+\infty$ then the eigenspace of $\lambda=0$ is the span of the eigenfunction $\mu(M)^{-1/2}$, then $E_0u=\frac{1}{\mu(M)}\int_M u \, d\mu$ and this gives \eqref{rand1}. 
             \item[$(ii)$] If $\mu(M)=+\infty$ then $E_0u=0$ and we have \eqref{rand2}. 
        \end{itemize}
         This concludes the proof.
 \end{proof}
 
\begin{remark}
     When $M$ is stochastically complete with $\mu(M)=+\infty$ the convergence in \eqref{rand2} also follows by Theorem \ref{mainrandom1}, since $\theta_M\equiv 1$ in this case. Nevertheless, the argument carried on in Theorem \ref{mainrandom1} is much more general and shows what happens in the limit on any manifold with $\mu(M)=+\infty$, even when $M$ is not stochastically complete (i.e. when $(-\Delta)_{\rm Si}^{s/2}$ and $(-\Delta)_{\rm Spec}^{s/2}$ do not coincide). 
 \end{remark}

 \section{Asymptotics: finite volume manifolds}\label{section4}

 \subsection{Global asymptotics}
 
We first give a simple proof of Theorem \ref{maincompact} in the case $\Omega =M$, using our results from \autoref{flapsection} on the equivalence of the spectral fractional Laplacian and ours defined by the singular integral \eqref{flap}.

 \begin{theorem}\label{globlimcomp}
     Let $(M,g)$ be a complete Riemannian manifold with $\mu(M)<+\infty$ and let $s_\circ \in (0,1)$. Then, for every $u \in H^{s_\circ/2}(M)$ there holds
     \begin{equation*}
         \lim_{s \to 0^+} \frac{1}{2} [u]_{H^{s/2}(M)}^2 = \|u \|_{L^2(M)}^2 - \frac{1}{\mu(M)} \left( \int_M u \, d\mu\right)^2.
     \end{equation*}
 \end{theorem}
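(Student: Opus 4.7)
My plan is to reduce the statement to a spectral computation by invoking the equivalence, proved in \autoref{flapsection}, between the singular-integral fractional Laplacian $(-\Delta)^{s/2}_{\rm Si}$ and the Bochner/spectral one. A complete Riemannian manifold with $\mu(M)<+\infty$ is automatically stochastically complete, so this equivalence is at our disposal. Writing $d\mu_u$ for the spectral measure of the nonnegative self-adjoint operator $-\Delta$ on $L^2(M)$ associated with $u$, I would start from
$$\frac{1}{2}[u]^2_{H^{s/2}(M)}=\int_M u\,(-\Delta)^{s/2}u\,d\mu=\int_{(0,\infty)} \lambda^{s/2}\,d\mu_u(\lambda),$$
noting that the atom at $\lambda=0$ (if any) contributes nothing since $0^{s/2}=0$ for $s>0$.

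Next I would pass to the limit $s\to 0^+$ by dominated convergence. On $(0,\infty)$ the integrand $\lambda^{s/2}$ converges pointwise to $1$, and for every $s\in(0,s_\circ]$ one has the simple bound $\lambda^{s/2}\le 1+\lambda^{s_\circ/2}$. This dominating function is $d\mu_u$-integrable, since $\int d\mu_u=\|u\|^2_{L^2(M)}<+\infty$ and, by the same spectral identity at $s=s_\circ$, $\int \lambda^{s_\circ/2}\,d\mu_u=\frac{1}{2}[u]^2_{H^{s_\circ/2}(M)}<+\infty$ by hypothesis. Therefore
$$\lim_{s\to 0^+}\frac{1}{2}[u]^2_{H^{s/2}(M)}=\mu_u\bigl((0,\infty)\bigr)=\|u\|^2_{L^2(M)}-\mu_u(\{0\}).$$

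It remains to identify $\mu_u(\{0\})$ as $\|P_0 u\|^2_{L^2}$, where $P_0$ is the orthogonal projection onto $\ker(-\Delta)\subset L^2(M)$. Since $M$ is complete and connected, Theorem \ref{dfsdfs} forces every $L^2$-harmonic function to be constant; as $\mu(M)<+\infty$, constants belong to $L^2(M)$, and hence $\ker(-\Delta)$ is precisely the one-dimensional subspace of constants. The orthogonal projection of $u$ is then the average $\bar u=\frac{1}{\mu(M)}\int_M u\,d\mu$, and $\|P_0 u\|_{L^2}^2=\bar u^2\cdot\mu(M)=\frac{1}{\mu(M)}\bigl(\int_M u\,d\mu\bigr)^2$. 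Substituting gives the claimed formula.

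The most delicate point is the spectral identity itself on the whole space $H^{s_\circ/2}(M)$, rather than on a smooth core like $C_c^\infty(M)$: this is exactly what \autoref{flapsection} is set up to provide, by combining the equivalence of the singular-integral and Bochner fractional Laplacians (valid under stochastic completeness) with the standard spectral representation of $(-\Delta)^{s/2}_{\rm B}$. As a self-contained back-up, one could bypass spectral calculus by writing $\frac{1}{2}[u]^2_{H^{s/2}(M)}=\frac{1}{|\Gamma(-s/2)|}\int_0^\infty\bigl(\|u\|^2_{L^2}-\|e^{t\Delta/2}u\|^2_{L^2}\bigr)t^{-1-s/2}\,dt$, observing that the integrand $\phi(t)$ is monotone nondecreasing with $\phi(0)=0$ and, by Lemma \ref{gryg}(i), $\phi(\infty)=\|u\|^2_{L^2}-\frac{1}{\mu(M)}\bigl(\int_M u\,d\mu\bigr)^2$, and then invoking the elementary Abelian identity $\frac{s}{2}\int_T^\infty t^{-1-s/2}\,dt=T^{-s/2}\to 1$ as $s\to 0^+$ together with the $H^{s_\circ/2}$-bound $\phi(t)\lesssim t^{s_\circ/2}$ to control the short-time contribution.
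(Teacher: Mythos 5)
Your main argument is essentially the paper's own proof: both invoke the spectral identity from \autoref{flapsection} (stochastic completeness holds automatically since $\mu(M)<\infty$), pass to the limit $s\to 0^+$ in $\int_{(0,\infty)}\lambda^{s/2}\,d\langle E_\lambda u,u\rangle$ by dominated convergence, and identify $\ker(-\Delta)\cap L^2(M)$ with the constants via Theorem \ref{dfsdfs}. You are slightly more explicit than the paper at the dominated-convergence step (supplying the majorant $1+\lambda^{s_\circ/2}$ and its integrability), and the semigroup-based ``back-up'' you sketch at the end is a genuinely different, correct route, but it is not the argument you actually develop.
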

%  \begin{proof}[Proof - closed manifolds]
% Let $\{\phi_k\}_{k=1}^{\infty} $ be an $L^2(M)$ orthonormal basis of eigenfunctions for $(-\Delta) $, with eigenvalues $0 \le \lambda_1 \le \lambda_2 \le \dotsc \le \lambda_k \to \infty$ as $k \to \infty$. Since $M$ is compact, we have that $\lambda_1=0$ and the first eigenvalue $\lambda_1$ is simple with eigenfunction $\phi_1 \equiv \mu(M)^{-1/2}$. Then, since $u\in L^2(M)$ we can write
% \begin{equation*}
%     u= \sum_{k=1}^{\infty} u_k \phi_k \,,
% \end{equation*}
% where $u_k = \lp u, \phi_k\rp_{L^2(M)} $ and the limit has to be understood in $L^2(M)$. Then 
% \begin{equation*}
%     (-\Delta)^{s/2} u = \sum_{k=1}^\infty \lambda_k^{s/2} u_k \phi_k \,,
% \end{equation*}
% where 
% \begin{equation*}
%     (-\Delta)^{s/2} u (x) = \int_M (u(x)-u(y)) \mathcal{K}_s(x,y) \, d\mu(y)
% \end{equation*}
% is the fractiona $(s/2)$-Laplacian on $M$ and $\mathcal{K}_s$ is defined as in \eqref{singkerdef}. Hence, for all $s<s_\circ$ 
% \begin{equation*}
%      \frac{1}{2} [u]_{H^{s/2}(M)}^2 =  \int_{M} u (-\Delta)^{s/2} u \, d\mu = \sum_{k=2}^{\infty} \lambda_k^{s/2} u_k^2 \,.
% \end{equation*}
% Taking the limit as $s\to 0^+$ gives
% \begin{equation*}
%     \lim_{s \to 0^+} \frac{1}{2} [u]_{H^{s/2}(M)}^2  = \sum_{k=2}^{\infty} u_k^2 = \|u \|_{L^2(M)}^2 - u_1^2 = \|u \|_{L^2(M)}^2 - \frac{1}{\mu(M)} \left( \int_M u \, d\mu\right)^2,
% \end{equation*}
% as desired.
% \end{proof}

 \begin{proof}[Proof]
Let $ \{E_\lambda \}_{\lambda \ge 0} $ be the spectral resolution of the Laplacian $-\Delta$ on $L^2(M)$, and let $\sigma(-\Delta) \subset [0,\infty)$ be the spectrum of $-\Delta$. In particular, for every $u\in L^2(M)$, $d\lp E_\lambda u,u \rp $ is a regular Borel (real valued) measure on $[0,\infty)$ concentrated on $\sigma(-\Delta)$, and with
\begin{equation*}
    \| u\|^2_{L^2(M)} = \int_{\sigma(-\Delta)} d\lp E_\lambda u,u \rp \,.
\end{equation*}

We refer to \cite[Appendix A.5]{GrygBook} for an introduction and properties of the spectral resolution. Since $\mu(M)<+\infty$, we have that $0 \in \sigma(-\Delta)$ lies in the point spectrum with eigenfunction $\phi_0 = \mu(M)^{-1/2}$. Then
\begin{equation*}
    -\Delta  = \int_{\sigma(-\Delta)} \lambda dE_\lambda  \,, \s \mbox{and  } \s   (-\Delta)^{s/2}_{\rm Spec}  = \int_{\sigma(-\Delta)} \lambda^{s/2} dE_\lambda \,,  
\end{equation*}
on ${\rm Dom}((-\Delta)^{s/2}_{\rm Spec}) := \big\{u \in L^2(M) \, : \, \int_{\sigma(-\Delta)} \lambda^{s} \, d\lp E_\lambda u, u \rp <+\infty \big\}$. 

\vsp 
Hence, for all $s<s_\circ$ by Corollary \ref{corolequivalence}
\begin{equation*}
     \frac{1}{2} [u]_{H^{s/2}(M)}^2 =  \int_{M} u (-\Delta)_{\rm Si}^{s/2} u \, d\mu = \int_{\sigma(-\Delta)\setminus \{ 0\}} \lambda^{s/2} d\lp E_\lambda u,u \rp \,.
\end{equation*}

Taking the limit as $s\to 0^+$ gives
\begin{equation*}
    \lim_{s \to 0^+} \frac{1}{2} [u]_{H^{s/2}(M)}^2  = \int_{\sigma(-\Delta)\setminus \{ 0\}} d\lp E_\lambda u,u \rp = \|u \|_{L^2(M)}^2 - \lp E_0 u, u \rp = \|u \|_{L^2(M)}^2 - \frac{1}{\mu(M)} \left( \int_M u \, d\mu\right)^2,
\end{equation*}
where in the last line we have used that $E_0$ is the projector onto the eigenspace of $-\Delta$ relative to the eigenvalue $\lambda=0$, but by a result of Yau (see Theorem \ref{dfsdfs}) on a complete manifold every $L^2(M)$ harmonic function is constant and then $\lp E_0 u, u \rp = \lp  \phi_0,u \rp_{L^2(M)}^2 = \frac{1}{\mu(M)} \left( \int_M u \, d\mu\right)^2 $.
\end{proof}

 \begin{remark}
This result allows us to prove our main theorem in the case $\Omega=M$. Indeed, if $E\subset M$ is such that $P_{s_\circ}(E)<+\infty$ for some $s_\circ \in (0,1)$, then taking $u = \chi_E$ in Theorem \ref{globlimcomp} gives
\begin{equation*}
    \lim_{s\to 0^+} \frac{1}{2} P_s(E)= \mu(E)-\frac{1}{\mu(M)} \mu(E)^2 = \frac{\mu(E)\mu(E^c)}{\mu(M)} \,.
\end{equation*}
\end{remark}

 \subsection{Localized asymptotics and proof of Theorem \ref{maincompact}}
Now we turn to the proof of the main result on the asymptotics for finite volume Theorem \ref{maincompact}

\begin{lemma}\label{ex 27}
Let $(M,g)$ be a complete Riemannian manifold, and let $A,B \subset M$ two disjoint measurable sets with (say) $\mu(A)<+\infty$. If $\J_{s_\circ}(A,B) < +\infty$ for some $s_\circ \in (0,1)$ then 
\begin{equation*}
       \lim_{s\to 0^+} \left| \J_s(A,B) - \frac{1}{|\Gamma(-s/2)|} \iint_{A\times B} \int_{1/s}^{\infty} H_M(x,y,t)\frac{dt}{t^{1+s/2}} \, d\mu(x) d\mu(y) \right|  = 0 \,.
    \end{equation*}
\end{lemma}
\begin{proof}
   Since $\int_M H_M(x,y,t) \, d\mu(x) \le  1 $ for all $y\in M$ and $t\in (0,\infty)$ we have
\begin{align*}
   & \bigg| \J_s(A,B) - \frac{1}{|\Gamma(-s/2)|} \iint_{A\times B} \int_{1/s}^{\infty} H_M(x,y,t)\frac{dt}{t^{1+s/2}} \, d\mu(x) d\mu(y) \bigg| \\ & =  \iint_{A\times B} \left( \mathcal{K}_s(x,y)- \frac{1}{|\Gamma(-s/2)|} \int_{1/s}^{\infty} H_M(x,y,t)\frac{dt}{t^{1+s/2}}  \right) \, d\mu(x) d\mu(y) \\  & =  \frac{1}{|\Gamma(-s/2)|} \iint_{A\times B} \left(\int_{0}^{1} H_M(x,y,t)\frac{dt}{t^{1+s/2}}+ \int_{1}^{1/s} H_M(x,y,t)\frac{dt}{t^{1+s/2}} \right) d\mu(x) d\mu(y) \\ & =  \frac{1}{|\Gamma(-s/2)|} \left( \iint_{A\times B} \int_{0}^{1} H_M(x,y,t)\frac{dt}{t^{1+s/2}} d\mu(x)d\mu(y) + \int_A \int_{1}^{1/s} \left(\int_B H_M(x,y,t) \, d\mu(x) \right)\frac{dt}{t^{1+s/2}} d\mu(y) \right) \\ & \le C s \iint_{A\times B} \int_{0}^{\infty} H_M(x,y,t)\frac{dt}{t^{1+s_\circ/2}} + Cs \mu(A) \int_{1}^{1/s} \frac{dt}{t^{1+s/2}} \\[4pt] & = C s \J_{s_\circ}(A,B) + C\mu(A)(1-s^{s/2}) \,,
\end{align*}
and taking $s\to 0^+$ concludes the proof. 
\end{proof}

\begin{proof}[Proof of Theorem \ref{maincompact}] First, we claim that
\begin{equation}\label{asd2}
    \lim_{s\to 0^+} \frac{1}{|\Gamma(-s/2)|} \iint_{A\times B} \int_{1/s}^{\infty} H_M(x,y,t)\frac{dt}{t^{1+s/2}} \, d\mu(x) d\mu(y) = \frac{\mu(A)\mu(B)}{\mu(M)} \,.
\end{equation}

Indeed
\begin{equation*}
    s \int_{1/s}^{\infty} H_M(x,y,t)\frac{dt}{t^{1+s/2}} = s^{1+s/2} \int_{1}^{\infty} H(x,y,r/s) \frac{dr}{r^{1+s/2}} \,,
\end{equation*}
and since by Lemma \ref{gryg}  as $t\to +\infty$ the heat kernel $H_M(x,y,t)$ converges to $1/\mu(M)$ for all $x,y \in M$, we get 
\begin{align*}
    \lim_{s\to 0^+} \frac{s}{2} \iint_{A\times B} \int_{1/s}^{\infty} H_M(x,y,t)\frac{dt}{t^{1+s/2}} \, d\mu(x) d\mu(y)  &= \frac{\mu(A)\mu(B)}{\mu(M)} \lim_{s\to 0^+}  (s/2)\, s^{s/2} \int_{1}^{\infty} \frac{dr}{r^{1+s/2}} \\ & = \frac{\mu(A)\mu(B)}{\mu(M)} \,.
\end{align*}
Then, putting together Lemma \ref{ex 27} and \eqref{asd2} readily implies
\begin{equation*}
    \lim_{s\to 0^+} \J_s(A,B) = \frac{\mu(A)\mu(B)}{\mu(M)} \,.
\end{equation*}

Lastly, since $P_{s_\circ}(E, \Omega) <+\infty$ and
\begin{equation*}
     \frac{1}{2} P_s(E,\Omega) = \J_s(E\cap \Omega, E^c \cap \Omega ) + \J_s(E\cap \Omega, E^c \cap \Omega^c ) + \J_s(E\cap \Omega^c, E^c \cap \Omega ) \,,
\end{equation*}
the theorem follows by letting $s\to 0^+$.

\end{proof}

	In \cite{CaCiLaPa21a} the authors prove the following result regarding the $s$-perimeter of the Gaussian space. Since the total mass of the Gaussian space is one, we see that this is formally identical to our Theorem \ref{maincompact} for finite volume.

 \begin{theorem}[Main Theorem in \cite{CaCiLaPa21a}] Let $\Omega \subset \R^n$ be an open and connected set with Lipschitz boundary. Then, for any $E\subset \R^n$ measurable set such that $P_{s_\circ}^\gamma(E, \Omega) <+\infty$ for some $s_\circ \in (0,1)$ there holds
 \begin{equation*}
     \lim_{s\to 0^+} \frac{s}{2}P_s^\gamma (E; \Omega) = \gamma(E)\gamma(E^c \cap \Omega) + \gamma(E\cap \Omega)\gamma(E^c \cap \Omega^c) \,,
 \end{equation*}
 where $P_s^\gamma(E, \Omega)$ is the fractional Gaussian perimeter
 \begin{align*}
     & P_s^\gamma  (E, \Omega) \\ & = \iint_{E\cap \Omega \times E^c \cap \Omega} \mathcal{K}_s(x,y) \, d\gamma_x d\gamma_y + \iint_{E\cap \Omega \times E^c \cap \Omega} \mathcal{K}_s(x,y) \, d\gamma_x d\gamma_y + \iint_{E\cap \Omega \times E^c \cap \Omega} \mathcal{K}_s(x,y) \, d\gamma_x d\gamma_y \,,
 \end{align*}
 and $\mathcal{K}_s(x,y)$ is defined as in \eqref{singkerdef} with on the right-hand side the heat kernel $H_{\gamma}$ of the Gaussian space $(\R^n, \gamma)$, where $d\gamma(x) = \frac{1}{(2\pi)^{n/2}} e^{-|x|^2/2} \mathcal{L}^n(dx)$. 
     
 \end{theorem}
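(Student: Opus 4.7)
The plan is to recognize the Gaussian space as the weighted Riemannian manifold $(\R^n, g_{\rm eucl}, e^{-|x|^2/2}\mathcal{L}^n)$ with total weighted volume $\gamma(\R^n)=1$, and then apply the proof of Theorem \ref{maincompact} essentially verbatim in this weighted setting. The authors have already emphasized in \autoref{Weighted manifolds} that the results of \autoref{section2} and the proof in \autoref{section3} extend to weighted manifolds with no substantial changes; the Gaussian space is the canonical finite-mass example of such a weighted manifold, so the task reduces to checking that the two properties of the heat kernel needed in the proof of Theorem \ref{maincompact} hold here.

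Concretely, I would first verify: (a) stochastic completeness $\int_{\R^n} H_\gamma(x,y,t)\, d\gamma(y)=1$ for every $t>0$ and every $x\in \R^n$, and (b) $H_\gamma(x,y,t)\to 1/\gamma(\R^n)=1$ as $t\to\infty$, locally uniformly in $(x,y)$. Property (a) is classical for the Ornstein--Uhlenbeck semigroup (Mehler's formula). Property (b) is the weighted analogue of Lemma \ref{gryg}(i); it can be established either by invoking the weighted Yau theorem (the Bakry--\'Emery Ricci tensor of the Gaussian space equals $\mathrm{Hess}(|x|^2/2)=I\ge 0$, so every $L^2(\gamma)$-harmonic function is constant, and the spectral argument of Lemma \ref{gryg}(i) carries over), or directly from the explicit spectral gap given by the Hermite orthonormal basis, which yields exponential $L^2(\gamma)$-convergence of $e^{tL}f$ to its $\gamma$-mean.

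Next I would reproduce the two-step decomposition of Theorem \ref{maincompact}: for disjoint measurable $A,B\subset \R^n$ with $\mathcal{J}_{s_\circ}^\gamma(A,B)<+\infty$, one splits the time integral defining $\mathcal{K}_s$ into the ranges $(0,1/s)$ and $(1/s,\infty)$. The near-time portion, analogous to \eqref{asd1}, is controlled by stochastic completeness and the finiteness hypothesis at $s_\circ$, and vanishes as $s\to 0^+$. On the far-time portion, analogous to \eqref{asd2}, property (b) combined with the identity $\lim_{s\to 0^+}(s/2)\,s^{s/2}\int_1^\infty r^{-1-s/2}\,dr=1$ produces the factor $\gamma(A)\gamma(B)/\gamma(\R^n)=\gamma(A)\gamma(B)$. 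Summing the resulting limit of $\mathcal{J}_s^\gamma$ over the three pairs $(E\cap\Omega, E^c\cap\Omega)$, $(E\cap\Omega, E^c\cap\Omega^c)$, $(E\cap\Omega^c, E^c\cap\Omega)$ that define $\tfrac{1}{2}P_s^\gamma(E,\Omega)$, and using $\gamma(E)=\gamma(E\cap\Omega)+\gamma(E\cap\Omega^c)$, yields the stated formula; the factor $s/2$ on the left-hand side of the theorem is the standard reconciliation between the original normalization of $P_s^\gamma$ in \cite{CaCiLaPa21a}, which omits the prefactor $|\Gamma(-s/2)|^{-1}\sim s/2$ that we have built into $\mathcal{K}_s$.

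The only step that requires some care is the dominated-convergence argument on the far-time portion: one needs a $t$-independent majorant for $H_\gamma(x,y,t)$ on $A\times B\times (1/s_1,\infty)$ whose integral against $d\gamma(x)\,d\gamma(y)$ is finite. Unlike a compact manifold, $\R^n$ is not bounded, so local uniform convergence does not directly supply such a bound; however, hypercontractivity of $e^{tL}$ (equivalently, the explicit Mehler formula) gives a pointwise bound of $H_\gamma(x,y,t)$ by a constant depending only on $t_0>0$ for all $t\ge t_0$, and combined with $\gamma(\R^n)=1$ this yields the needed majorant. I expect this to be the only mildly non-routine check, and it is a bookkeeping step rather than a genuine obstacle.
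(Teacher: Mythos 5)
Your overall strategy is exactly the one the paper takes: the theorem is obtained as the Gaussian instance of the weighted-manifold extension of Theorem~\ref{maincompact}, which is precisely the content of \autoref{Weighted manifolds}. The verification of stochastic completeness via Mehler and of the large-time heat-kernel limit via the Bakry--\'Emery/Hermite spectral gap is the correct way to check the two inputs of Lemma~\ref{gryg}(i), and the split of the time integral at $t=1/s$ reproduces \eqref{asd1}--\eqref{asd2} faithfully.

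One concrete point in your proposal is wrong, though it concerns a technical step that can still be closed by a different argument. You claim that hypercontractivity (equivalently, Mehler's formula) gives a pointwise bound $H_\gamma(x,y,t)\le C(t_0)$ for all $t\ge t_0$ and all $(x,y)$. This is false: on the diagonal, Mehler's formula gives
\begin{equation*}
H_\gamma(x,x,t)=\frac{1}{(1-e^{-2t})^{n/2}}\exp\!\left(\frac{e^{-t}\,|x|^2}{1+e^{-t}}\right)\longrightarrow +\infty
\end{equation*}
as $|x|\to\infty$ for any fixed $t$, so no such $t$-independent $L^\infty$ majorant exists, and hypercontractivity only controls $L^p\to L^q$ operator norms, not the kernel pointwise. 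The correct way to pass to the limit in the far-time piece does not need any pointwise majorant: after the change of variables $t=r/s$, the inner quantity $\phi_s(r):=\iint_{A\times B}H_\gamma(x,y,r/s)\,d\gamma\,d\gamma$ satisfies $0\le\phi_s(r)\le\gamma(A)\wedge\gamma(B)\le 1$ by stochastic completeness, and $\phi_s(r)\to\gamma(A)\gamma(B)$ for every $r\ge 1$ because $H_\gamma(\cdot,\cdot,\tau)\to 1$ pointwise while $\iint H_\gamma\,d\gamma\,d\gamma\equiv 1$, so Scheff\'e's lemma gives $L^1(\gamma\otimes\gamma)$-convergence; moreover this convergence is uniform in $r\ge1$ once $s$ is small since $r/s\ge 1/s$ is then uniformly large. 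Combined with the total mass $s^{s/2}\to1$ of the measure $\tfrac{s}{2}s^{s/2}r^{-1-s/2}\,dr$ on $[1,\infty)$, this closes the step. In short, the approach is right and matches the paper, but the justification you offered for the dominated-convergence step should be replaced by the Scheff\'e-type argument above.
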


The proof in \cite{CaCiLaPa21a} follows the same lines as our proof of Theorem \ref{maincompact}, but the authors heavily use the fact that they know the explicit form of the heat kernel $H_\gamma$ for the Gaussian space. In the next subsection, we briefly explain how our method implies their result when applied to weighted manifolds.

\subsection{Weighted manifolds}\label{Weighted manifolds} Our result for finite volume manifolds extends, with proofs mutatis mutandis, to the case of weighted manifolds with finite volume, implying the one in \cite{CaCiLaPa21a}.

\vsp
A weighted manifold is a Riemannian manifold $(M,g)$ endowed with a measure $\mu$ that has a smooth positive density with respect to the Riemannian volume form $dV_g$. The space $(M,g,\mu)$ features the so-called weighted Laplace operator $-\Delta_\mu$, generalizing the Laplace-Beltrami operator, which is symmetric with respect to measure $\mu$. It is possible to extend $-\Delta_\mu$ to a self-adjoint operator in $L^2(M, \mu)$, which allows one to define the heat semigroup $e^{t\Delta_\mu}$ as one would on a classical Riemannian manifold. The heat semigroup has the integral kernel $H_\mu(x,y,t)$, which is called the heat kernel of $(M,g,\mu)$, and has completely analogous properties as the classical one. For every detail regarding the heat kernel on weighted manifolds, we refer to the survey \cite{grygheatweighted}. 

\vsp
In this case, we see that our proof applies since Lemma \ref{gryg} also holds (with the same proof) on geodesically complete weighted manifolds, and also Theorem \ref{globlimcomp} holds with the same proof, since our results from \autoref{flapsection} are valid for weighted manifolds too.

\vsp
     Moreover, our method works also for manifolds with boundary and finite volume. Indeed, if $(M,g)$ is a complete manifold with (possibly empty) boundary and finite volume, and one defines $\mathcal{K}_s(x,y)$ by \eqref{singkerdef} with the heat kernel with Neumann boundary conditions on the right-hand side, then the same proof applies.

 \section{Asymptotics: infinite volume manifolds}\label{section5}

\subsection{Global asymptotics}

\begin{corollary}\label{sdfgbhsdt}
   Let $(M,g)$ be stochastically complete and with $\mu(M)=+\infty$. Let $E\subset M$ be bounded and such that $P_{s_\circ}(E)<+\infty$ for some $s_\circ \in (0,1)$. Then
    \begin{equation*}
        \lim_{s\to 0^+} \frac{1}{2} P_s(E) = \mu(E) \,.
    \end{equation*}
\end{corollary}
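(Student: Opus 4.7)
The plan is simply to specialize Theorem \ref{globlimnoncompact} to the function $u=\chi_E$. By definition of the fractional $s$-perimeter we have
\begin{equation*}
    \tfrac{1}{2}P_s(E) \;=\; \tfrac{1}{2}[\chi_E]_{H^{s/2}(M)}^2,
\end{equation*}
and $\|\chi_E\|_{L^2(M)}^2 = \mu(E)$, so the asymptotic in Theorem \ref{globlimnoncompact} translates precisely into the sought identity $\lim_{s\to 0^+} \tfrac{1}{2} P_s(E) = \mu(E)$. The only thing to check is that $\chi_E$ satisfies the hypotheses of Theorem \ref{globlimnoncompact}, namely that $\chi_E \in H^{s_\circ/2}(M)$ and has bounded support.

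First, $\chi_E$ has bounded support because $E$ itself is assumed to be bounded. Next, boundedness of $E$ also implies $\mu(E)<+\infty$ (since the Riemannian measure is locally finite on a complete Riemannian manifold and $\overline{E}$ is compact), hence $\chi_E \in L^2(M)$. Finally, the assumption $P_{s_\circ}(E)<+\infty$ reads exactly as $[\chi_E]_{H^{s_\circ/2}(M)}^2 < +\infty$, so $\chi_E \in H^{s_\circ/2}(M)$ according to Definition \ref{fracsobdef}.

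With these verifications in place, Theorem \ref{globlimnoncompact} applied to $u=\chi_E$ yields
\begin{equation*}
    \lim_{s\to 0^+} \tfrac{1}{2} P_s(E) \;=\; \lim_{s\to 0^+} \tfrac{1}{2}[\chi_E]_{H^{s/2}(M)}^2 \;=\; \|\chi_E\|_{L^2(M)}^2 \;=\; \mu(E),
\end{equation*}
which is the desired conclusion. There is no real obstacle here: all the analytical work (the pointwise convergence $(-\Delta)^{s/2}u \to u$ for smooth compactly supported $u$, the local comparison of $\mathcal{K}_s$ with the Euclidean kernel provided by Lemma \ref{porcatroia}, and the density argument upgrading the result to $H^{s_\circ/2}(M)$) has already been done in the proof of Theorem \ref{globlimnoncompact}.
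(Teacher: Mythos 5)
Your proposal is correct and follows exactly the same route as the paper: the paper's entire proof of this corollary consists of the single line ``Take $u = \chi_E$ in Theorem \ref{globlimnoncompact}.'' You have merely spelled out the (routine) verification that $\chi_E$ meets the hypotheses of that theorem, which is a reasonable thing to make explicit.
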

\begin{proof}
    Since $M$ is stochastically complete, by Proposition \ref{limexistence} we have $\theta_M\equiv 1$. Then the result follows taking $u = \chi_E$ in Theorem \ref{globlimnoncompact}. 
\end{proof}

One can note that stochastical completeness is not really needed in Corollary \ref{sdfgbhsdt}. Even when $M$ is not stochastically complete, by Theorem \ref{mainrandom0}, we know that $\theta_M$ is a (possibly non-constant) bounded harmonic function with values in $[0,1]$. Then, by Theorem \ref{globlimnoncompact} with $u=\chi_E$ again

 \begin{equation*}
        \lim_{s\to 0^+} \frac{1}{2} P_s(E) = \int_E \theta_M \, d\mu 
    \end{equation*}
    
Consequently, if in particular $\theta_M \equiv \theta_\circ \in [0,1] $ we have
     \begin{equation}\label{jaja}
        \lim_{s\to 0^+} \frac{1}{2} P_s(E) = \theta_\circ \mu(E) \,,
    \end{equation}
    for every $E$ bounded with $P_{s_\circ}(E)<+\infty$. This feature led us to note the following example, which shows that, interestingly enough, Riemannian manifolds with $\theta_M \equiv \theta_\circ=0$ exists.

\begin{example}\label{exampleshit}
    There exists a complete Riemannian manifold $N$ where the asymptotics of the fractional $s$-perimeter as $s\to 0^+$ is zero for every set, that is: for every bounded $E$ with $P_{s_\circ}(E)<+\infty $  for some $s_\circ\in (0,1)$ there holds 
    \begin{equation*}
        \lim_{s\to 0^+} P_s(E) = 0 \,.
    \end{equation*}

    By \eqref{jaja} above we see that it is enough to provide an example of a Riemannian manifold $N$ with $\theta_N(p) \equiv  \theta_\circ =0$, meaning that the limit does not depend on the point $p$ and is always zero. Moreover, by part $(ii)$ of Remark \ref{adfs} this is satisfied if $N$ has the $L^\infty-{\rm Liouville}$ property, is not stochastically complete and
    \begin{equation*}
        \mathcal{N}(t,p)=\int_N H_N(x,p,t) \, d\mu(x)  \to 0 \,,\s \textnormal{as} \,\, t\to \infty \,.
     \end{equation*}
     
A complete Riemannian manifold $N$ with these properties actually exists, and we now sketch how it is constructed. We want $N$ such that
\begin{itemize}
\setlength\itemsep{3pt}
    \item[$(i)$] $N$ has the $L^\infty-{\rm Liouville}$ property.
    \item[$(ii)$] $N$ is not stochastically complete.
    \item[$(iii)$] For every $p\in N$ we have $\mathcal{N}(t,p)=\int_N H_N(x,p,t) \, d\mu(x)  \to 0 $.
\end{itemize}

The construction of $N$ that satisfies $(i), (ii)$ is taken from \cite[Section 13.5]{Gryg22222}, which in turn builds on the first such example found by Pinchover in \cite{PinYehu95}. Here, we note that it satisfies also $(iii)$. 

\begin{figure}[h]
\caption{The two dimensional jungle-gym in $\R^3$. Picture taken from \cite{Gryg22222}.}
\centering
\includegraphics[width=0.5\textwidth]{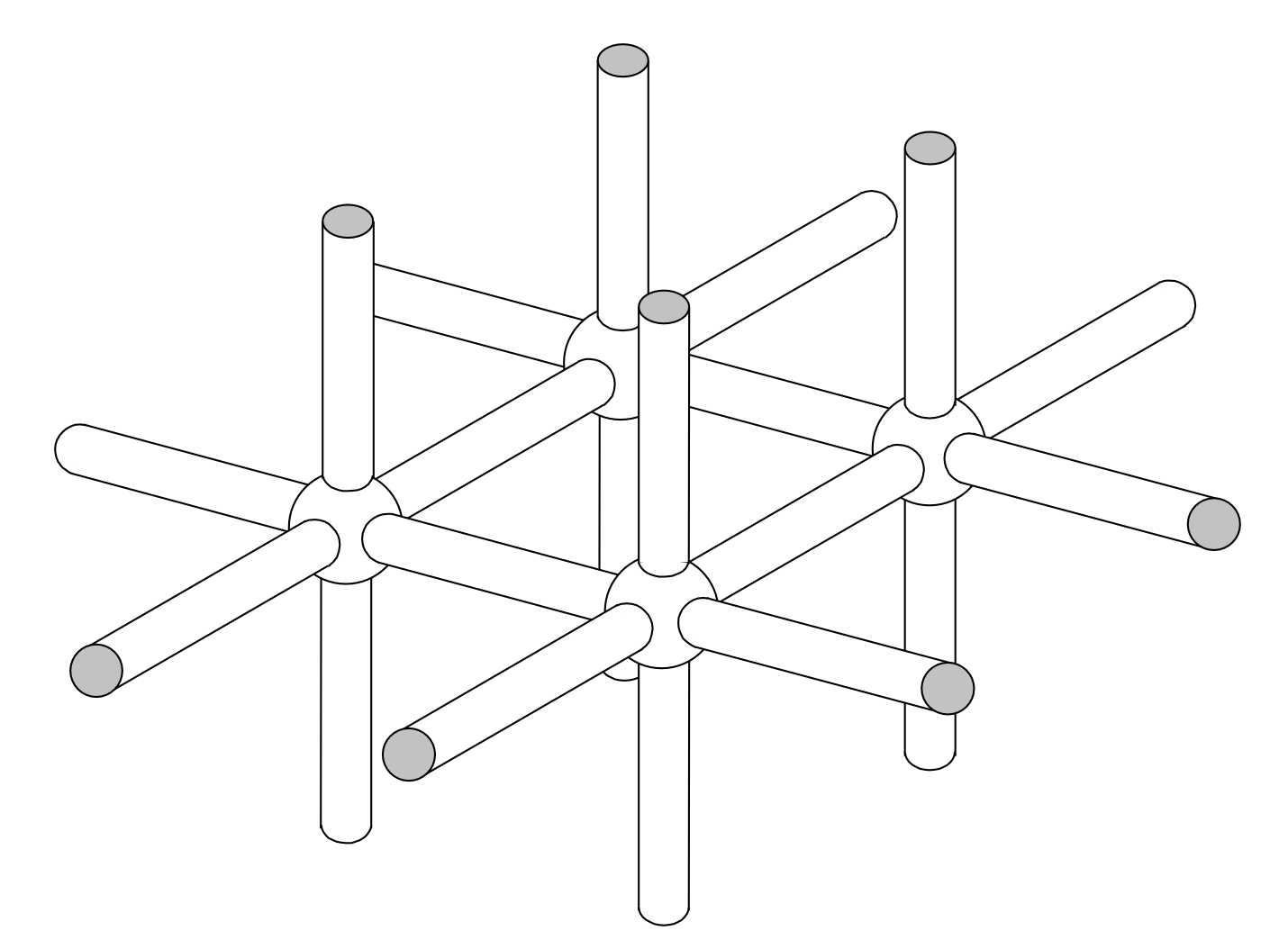}
\label{figA}
\end{figure}

\vsp
    Start from the two-dimensional jungle-gym $JG^2$ in $\R^3$ as in Figure \ref{figA}. This is done by smoothly connecting the lattice $\mathbb{Z}^3 \subset \R^3$ with necks. Let $g$ be the standard metric on $JG^2$ induced by the embedding in $\R^3$. Fix $o\in JG^2$ and let $r:=d(o,x)$. One can show that $JG^2$ has the $L^\infty-{\rm Liouville}$ property. Moreover, there holds $\mu(B_R(o)) \le CR^3$, and the Green function grows at most as $G(o,x) \le C/r$ for large $r$. Let $\rho:JG^2 \to [0,+\infty) $ be a smooth positive function with $\rho=1$ in $[0,1]$ and $\rho(r) \sim \frac{1}{r\log(r)}$ for large $r$, and consider the conformal metric $\widehat{g} := \rho^2(r) g $ on $JG^2$. We claim that $N:=(JG^2, \widehat{g})$ has the desired properties. Since 
\begin{equation*}
    \int_1^\infty \rho(r)dr =\infty \,,
\end{equation*}
then $N$ is geodesically complete and hence complete. Moreover, as the Laplacian is conformally invariant in dimension two, $JG^2$ with its standard metric and $N$ have the same harmonic functions, and thus $N$ also has the $L^\infty-{\rm Liouville}$ property and satisfies $(i)$. Denote by $\widehat{G}$ the Green's function of $N$. Then, by the choice of $\rho$, for $R$ big 
\begin{equation*}
    \int_{N\setminus B_R(o)} \widehat{G}(o,x) \, d\widehat{\mu}(x) =  \int_{JG^2\setminus B_R(o)} G(o,x) \rho^2(r)\, d\mu(x) <+\infty \,,
\end{equation*}
and by \cite[Corollary 6.7]{Gryg22222} this implies $(ii)$. Consequently, note that also
\begin{align*}
    \int_{0}^\infty \mathcal{N}(p,t) \, dt & = \int_{0}^\infty \int_N H_N(x,p,t) \, d\widehat{\mu}(x) dt  = \int_N \left( \int_0^\infty H_N(x,p,t) \, dt\right)d\widehat{\mu}(x) \\ &= \int_N \widehat{G}(o,x) \, d\widehat{\mu}(x) = \int_{N\setminus B_R(o)} \widehat{G}(o,x) \, d\widehat{\mu}(x) + \int_{B_R(o)} \widehat{G}(o,x) \, d\widehat{\mu}(x) < +\infty \,,
\end{align*}
and since the function $\mathcal{N}(p, \cdot)$ is also nonincreasing this implies that $N$ also satisfies $(iii)$.

    \end{example}

\subsection{Localized asymptotics and proof of Theorem \ref{mainnoncompact}}

  We now show (among other things) that \eqref{eq: theta limit def} is well-posed as in $\R^n$ for manifolds with the $L^\infty-{\rm Liouville}$ property, in the sense that it does not even depend on the choice of $p$. 

  \begin{lemma}\label{lem: ex theta everywhere} Let $(M,g)$ be a complete Riemannian manifold with $\mu(M)=+\infty$ and $E\subset M$ be a set for which the limit \eqref{eq: theta limit def} exists for some $p\in M$. If $M$ has the $L^\infty-{\rm Liouville}$ property, then $\theta_E(p) \equiv \theta_E$ is constant, meaning that the limit in $\theta_E(q)$ exists for all $q\neq p$ and equals $\theta_E(p)$. 
  \end{lemma}
  
  \begin{proof} We adopt the notation in the proof of Theorem \ref{mainrandom0}. In particular, let $ q \mapsto \Theta_{E,s}(q) $ be defined in \eqref{eq: big theta def}. Arguing exactly as in the proof of Theorem \ref{mainrandom0}, every subsequential limit (say, in $C^{2,\alpha}_{\rm loc} (M)$) of $\Theta_{E,s}$ as $s\to 0^+$ is a bounded harmonic function on $M$. 

  Since $M$ has the $L^\infty-{\rm Liouville}$ property, every such subsequential limit is constant. Then, since the limit $\lim_{s\to 0^+} \Theta_{E,s}(p) = \theta_E(p)$ exists by hypothesis, all the subsequential limits must coincide with $\theta_E(p)$ everywhere.
  \end{proof}

  Let us note that the conclusion of Lemma \ref{lem: ex theta everywhere} is not completely trivial in general and is particular of Riemannian manifolds that have the $L^\infty-{\rm Liouville}$ property. Indeed, we believe that on a general complete Riemannian manifold, it can happen that the limit in $\theta_E(\cdot)$ exists for some $p\in M$ but does not exist for some other $q\in M$ with $q\neq p$. See \autoref{sbs: theta existence at diff points} for a brief discussion on this feature.

\begin{lemma}\label{cor50} In the hypothesis of Lemma \ref{lem: ex theta everywhere}, for every bounded $F \subset M$ and $R>0$ with $F\subset B_{R/2}(p)$ there holds
     \begin{equation*}
         \mu(F) \theta_E =  \lim_{s \to 0^+} \J_s(F, E \setminus B_R(p))= \lim_{s \to 0^+} \int_F \int_{E\setminus B_R(p)} \mathcal{K}_s(x,y) \, d\mu(x) d\mu(y) .
     \end{equation*}
\end{lemma}
\begin{proof}
 
    Now since $F \subset B_{R/2}(p)$, we have that $B_{R/10}(y) \subset B_R(p) \subset B_{10R}(y)$ for every $y\in F$. Since the kernel $\mathcal{K}_s$ is nonnegative we get
\begin{equation*}
    \int_{E\setminus B_{10R}(y)} \mathcal{K}_s(x,y) \, d\mu(x) \le \int_{E\setminus B_R(p)} \mathcal{K}_s(x,y) \, d\mu(x) \le  \int_{E\setminus B_{R/10}(y)} \mathcal{K}_s(x,y) \, d\mu(x) \,.
\end{equation*}

By the very definition of $\theta_E$ \eqref{eq: theta limit def} and the fact that the limit does not depend on the radius whenever it exists (see part $(i)$ of Theorem \ref{mainrandom0}) both the left-hand side and right-hand side of the last inequality converge to $\theta_E(y) = \theta_E$, since $\theta_E$ is constant by Lemma \ref{lem: ex theta everywhere}, as $s\to 0^+$. Hence, integrating in $y\in F$ and letting $s\to 0^+$, by dominated convergence 
\begin{equation*}
    \lim_{s \to 0^+} \int_F \int_{E\setminus B_R(p)} \mathcal{K}_s(x,y) \, d\mu(x) d\mu(y) = \int_F \theta_E \, d\mu(y) = \mu(F)\theta_E \,,
\end{equation*}
which is what we wanted to prove.
\end{proof}

\begin{lemma}\label{kkk}
   Let $(M,g)$ be complete with $\mu(M)=+\infty$, and let $A,B \subset M$ be two disjoint measurable sets with $\mu(A), \mu(B)<+\infty$ and with $\J_{s_\circ}(A,B) < +\infty$, for some $s_\circ \in (0,1)$. Then
    \begin{equation*}
        \lim_{s\to 0^+}  \J_s(A,B) =0 \,.
    \end{equation*}
\end{lemma}
\begin{proof}
    First, by Lemma \ref{ex 27} we have
    \begin{equation*}
           \limsup_{s\to 0^+}\J_s(A,B)\leq \limsup_{s\to 0^+} \frac{s}{2} \iint_{A\times B}\int_{1/s}^\infty H_M(x,y,t)\frac{dt}{t^{1+s/2}}d\mu(x)d\mu(y) \,.
    \end{equation*}
    
   Then
    \begin{align*}
         \frac{s}{2} \iint_{A\times B} \int_{1/s}^\infty H_M(x,y,t) & \frac{dt}{t^{1+s/2}}d\mu(x)d\mu(y) = 
         Cs^{1+s/2} \int_{A} \int_{1}^\infty e^{(\xi/s)\Delta}(\chi_B)(x)\frac{d\xi}{\xi^{1+s/2}}d\mu(x) \\ & \le C \int_A \left( s \int_{1}^\infty e^{(\xi/s)\Delta}(\chi_B)(x)\frac{d\xi}{\xi^{1+s/2}} \right) d\mu(x) \,.
    \end{align*}

    Since $\chi_B \in L^1(M)$, for every $x\in A$ (see Remark \ref{heatsolconv}) there holds by dominated convergence
\begin{equation*}
    s \int_{1}^\infty e^{(\xi/s)\Delta}(\chi_B)(x)\frac{d\xi}{\xi^{1+s/2}} \to 0 \,,
\end{equation*}
as $s \to 0^+$. From here, the result follows by dominated convergence using that $\mu(A)<+\infty$.
\end{proof}

The results above directly imply the following. 
\begin{corollary}\label{kkk3} Let $(M,g)$ be complete with $\mu(M)=+\infty$ and with the $L^\infty-{\rm Liouville}$ property, and let  $\Omega \subset M$ be bounded. Then, for every $F\subset \Omega $ with $P_{s_\circ}(F, \Omega) <+\infty$, for some $s_\circ \in (0,1)$, there holds
\begin{equation*}
    \lim_{s \to 0^+} \J_s(F, E \cap \Omega^c) = \mu(F)\theta_E \,.
\end{equation*}
\end{corollary}
\begin{proof}
    Let $p \in M$ and $R \gg 1$ be such that $\Omega \subset B_R(p)$, then
    \begin{align*}
        \J_s(F, E \cap \Omega^c) & =  \J_s(F, E \cap \Omega^c \cap B_R(p)) +  \J_s(F, E \cap \Omega^c \cap B_R^c(p) ) \\ &=  \J_s(F, E \cap \Omega^c \cap B_R(p)) +  \J_s(F, E \cap B_R^c(p) ) \,.
    \end{align*}
    From here, since $\Omega^c \cap B_R(p)$ and $F$ are disjoint and both with finite volume, the first term tends to zero as
     \begin{equation*}
        \J_s(F, E \cap \Omega^c \cap B_R(p)) \le \J_s(F, \Omega^c \cap B_R(p)) \to 0 \,,
    \end{equation*}
    as $s\to 0^+$. Moreover, the second term tends to $\mu(F) \theta_E$ by Lemma \ref{cor50}. 
\end{proof}

 The proof of our main theorem in the infinite volume case is just a simple application of all the results we have derived above. 
 
 \begin{proof}[Proof of Theorem \ref{mainnoncompact}] Write
\begin{align*}
    \frac{1}{2} P_s(E,\Omega) & = \J_s(E\cap \Omega, E^c \cap \Omega ) + \J_s(E\cap \Omega, E^c \cap \Omega^c ) + \J_s(E\cap \Omega^c, E^c \cap \Omega ) \\ &= \frac{1}{2}P_s(E\cap \Omega) - \J_s(E\cap \Omega, E \cap \Omega^c )+\J_s(E^c \cap \Omega, E \cap \Omega^c ) \,.
\end{align*}

By Corollary \ref{sdfgbhsdt} applied to the first term, and by Corollary \ref{kkk3} applied with $F= E\cap \Omega$ and $F=E^c\cap \Omega$ respectively on the second and third term, taking the limit as $s\to 0^+$ we get
     \begin{align*}
         \lim_{s\to 0^+} \frac{1}{2} P_s(E,\Omega) & =  \mu(E\cap \Omega) - \theta_E \mu(E \cap \Omega) +\theta_E \mu(E^c \cap \Omega) \\ &= (1-\theta_E)\mu(E\cap \Omega) +\theta_E\mu(E^c\cap \Omega) \,,
     \end{align*}
     and this shows $(i)$. 
     
     \vsp
     To prove $(ii)$ and $(iii)$ we follow closely the proof of in \cite[Theorem 2.7]{dpfv}, which deals with the analogous property in the case of the Euclidean space $\R^n$. We just sketch the argument since in the reference \cite{dpfv}, the proof is carried on in full detail, and in our case, it is analogous. Let us denote 
     \begin{equation}
         \Theta_{E,s}:= \int_{E \setminus B_R(p)} \mathcal{K}_s(x,p) d\mu(x)   \,,
     \end{equation}
 and fix $R >0 $ such that $\Omega \subset B_{R/2}(p)$. Note that 
 \begin{align*}
    & \int_{\Omega\setminus E} \int_{E\setminus B_R(p)} \mathcal{K}_s(x,y) \, d\mu(x)d\mu(y) - \int_{\Omega\cap E} \int_{E\setminus B_R(p)} \mathcal{K}_s(x,y) \, d\mu(x)d\mu(y) \\[4pt] &=  \frac{1}{2} P_s(E, \Omega) -  \frac{1}{2} P_s(E\cap \Omega, \Omega) - \mathcal{J}_s (\Omega \setminus E, (E\setminus \Omega)\cap B_R(p)) + \mathcal{J}_s(\Omega \cap E, (E\setminus \Omega)\cap B_R(p))   \,.
 \end{align*}
 
 Now, arguing exactly as in the proof of Lemma \ref{cor50} we have that for every $F\subset \Omega $ there holds
 \begin{equation}\label{eq: main noncompact 1}
     \lim_{s\to 0^+} \left| \mu(F)  \Theta_{E,s} - \int_{F} \int_{E\setminus B_R(p)} \mathcal{K}_s(x,y) \, d\mu(x)d\mu(y)  \right| =0 \,.
 \end{equation}
 
 Since $\Omega \setminus E$ and $(E\setminus \Omega)\cap B_R(p)$ are disjoint and both with finite volume (since they are bounded), by Lemma \ref{kkk} we have 
 \begin{equation*}
     \lim_{s\to 0^+} \mathcal{J}_s (\Omega \setminus E, (E\setminus \Omega)\cap B_R(p)) =0 \,,
 \end{equation*}
 and similarly 
 \begin{equation*}
    \lim_{s\to 0^+} \mathcal{J}_s(\Omega \cap E, (E\setminus \Omega)\cap B_R(p)) =0 \,.
 \end{equation*}
 
Hence, taking the limit as $s\to 0^+$ above using \eqref{eq: main noncompact 1} for the left-hand side with $F=\Omega\setminus E$ and $F=\Omega \cap E$ respectively gives
\begin{equation*}
    \lim_{s\to 0^+}  \Theta_{E,s} \big(\mu(\Omega\setminus E) -\mu(\Omega \cap E) \big) = \lim_{s\to 0^+}  \frac{1}{2} \big( P_s(E,\Omega) - P_s(E\cap \Omega, \Omega) \big) \,.
\end{equation*}

Since $E\cap \Omega \subset \Omega$ is bounded, by Corollary \ref{sdfgbhsdt} we have
\begin{equation*}
    \lim_{s\to 0^+}  \frac{1}{2} P_s(E\cap \Omega , \Omega) = \lim_{s\to 0^+}  \frac{1}{2} P_s(E\cap \Omega ) = \mu(E\cap \Omega) \,,
\end{equation*}
 thus 
\begin{equation*}
    \lim_{s\to 0^+}  \Theta_{E,s} \big(\mu(\Omega\setminus E) -\mu(\Omega \cap E) \big) = \left( \lim_{s\to 0^+}  \frac{1}{2}P_s(E,\Omega) \right) - \mu(E\cap \Omega) \,.
\end{equation*}

From here, the conclusion of the theorem easily follows. Indeed, if $\mu(\Omega\setminus E) = \mu(\Omega \cap E)$ then the limit $\lim_{s\to 0^+} \frac{1}{2} P_s(E,\Omega) $ always exists and is equal to $\mu(E\cap \Omega)$. On the other hand, if the limit $\lim_{s\to 0^+}  \frac{1}{2} P_s(E,\Omega) $ exists then from above the limit in $\theta_E$ also exists and there holds
\begin{equation*}
        \theta_E=\frac{\left( \lim_{s\to 0^+} \frac{1}{2} P_s(E, \Omega ) \right)-\mu(E\cap \Omega)}{\mu(\Omega \setminus E)-\mu(E\cap \Omega)} \,,
    \end{equation*}
 and this concludes the proof.
 \end{proof}

%\begin{lemma}
 %   Let $(M,g)$ be a non-compact n-dimensional Riemannian manifold with ${\rm Ric}\geq 0$, then there exists $c(n)>0$ such that
  %  \begin{equation*}
   %     \frac{A(p,R)}{V(p,R)}\geq\frac{c(n)}{R}\quad\forall R>0 \,.
    %\end{equation*}
    %\begin{proof}
     %   We first have
      %  \begin{equation*}
       %     \int_{R}^{2R}A(p,r) dr = V(B_{2R}(p)\setminus B_R(p)).
        %\end{equation*}
        %Then there exists $R_0\in(R,2R)$ such that 
        %\begin{equation*}
         %   A(p,R_0)\geq \frac{ V(p,2R)-V(p,R)}{R}.
      %  \end{equation*}
       % Using Bishop Gromov we know that 
        %\begin{equation*}
         %   A(p,R_0)\leq 2^{n-1}A(p,R),
        %\end{equation*}
        %which together imply
        %\begin{equation}
        %\label{A casina 1}
         %   A(p,R)\geq \frac{1}{2^{n-1}}\frac{V(p,2R)-V(p,R)}{R}=\frac{1}{2^{n-1}}\frac{\mu(B_{2R}(p)\setminus B_R(p))}{R}.
        %\end{equation}
        %$\mu(B_{2R}\setminus B_R)=V(p,2R)-V(p,R)$. Since the manifold is non compact there exists $y\in\partial B_{2R}(p)$ and a geodesic $\gamma$ connecting $p$ and $y$ such that $\gamma(0)=p$ and $\gamma(2R)=y$. Call now $z=\gamma(3R/2)$, then we have ($B_{R/2}(z)\subseteq B_{2R}(p)\setminus B_R(p)$) using the uniform Doubling property of the measure (which is always Bishop-Gromov)
    %\begin{equation}
    %\label{A casina 2}
     %   \mu(B_{2R}(p)\setminus B_R(p))\geq\mu(B_{R/2}(z))\geq \frac{1}{5^n}\mu(B_R(p)).
    %\end{equation}
    %Combining \ref{A casina 1} and \ref{A casina 2} we get
    %\begin{equation*}
     %   A(p,R)\geq\frac{2^{n-1}}{5^n}\frac{\mu(B_R(p))}{R},
    %\end{equation*}
    %concluding the proof with $C=2^{n-1}/5^n$
    %\end{proof}
    
%\end{lemma}

\section{Extension to {\rm RCD} spaces}\label{extension section}

In this section, we briefly explain how our results extend to the case of ${\rm RCD}(K, N)$ spaces, which are a generalization of Riemannian manifolds with an upper bound on the dimension $N$ and Ricci curvature bounded from below by the real number $K$ (and they include weighted manifolds). While assuming the reader familiar with the theory of ${\rm RCD}$ spaces we have to mention at least some references: the introduction of a synthetic lower bound on the Ricci curvature (${\rm CD}$ condition) has been done in the work of Lott and Villani \cite{LottVill09} and in the works of Sturm \cite{KTSI}, \cite{KTSII}. In a subsequent work, Ambrosio, Gigli, and Savarè introduced the ${\rm RCD}$ condition (see \cite{AmGigSav}) to rule out Finsler structures and enforce some Riemannian-like structure at small scales of the space (infinitesimal hilbertianity, see also \cite{Gigli15}).

\vsp 
We stress that we won't reprove every result of the smooth case, but only the ones presenting significant changes needed to perform the asymptotic analysis.

\vsp
First of all, on any ${\rm RCD}(K, N)$ space with $K\in\mathbb{R}$ and $N\in\mathbb{N}\cup\{\infty\}$ it is possible to define a heat kernel and to do so we shall exploit the theory of gradient flows. 

\vsp
We call the \emph{heat flow} $(e^{t\Delta})_{t>0}$ the gradient flow (in the sense of Komura-Brezis theory) of the Cheeger energy, which displays the following properties: for an $L^2$ function $f$ the curve $t\in(0,\infty)\to e^{t\Delta}f\in L^2 $ is locally absolutely continuous, it is such that $e^{t\Delta}f\in{\rm D}(\Delta)$, $\lim_{t\to 0}e^{t\Delta}f=f$ in $L^2$ and satisfies the $\emph{heat equation}$
 \begin{equation*}
     \frac{de^{t\Delta}}{dt}=\Delta e^{t\Delta}f\quad\forall t>0.
 \end{equation*}
 We will now collect some other properties of the heat flow holding on infinitesimally Hilbertian metric measure spaces which we will exploit (see \cite{GiPas20} for a reference):
 \begin{proposition}
 \label{heat flow properties}
     Let $(X,d,\mu)$ be an infinitesimally Hilbertian metric measure space, then we have 
     
     \begin{itemize}
         \item[$(i)$] \emph{(Weak maximum principle)}: Given any $f\in L^2(\mu)$ such that $f\leq C$ $\mu$-almost everywhere we have 
         \begin{equation*}
             e^{t\Delta}f\leq C\quad\mu-{\rm a.e.}
         \end{equation*} 
         \item[$(ii)$] \emph{($e^{t\Delta}$ is self-adjoint):} For all $f,g\in L^2(\mu)$ we have
         \begin{equation*}
             \int_Xe^{t\Delta}f gd\mu=\int_Xe^{t\Delta} g fd\mu\quad\forall t>0.
         \end{equation*} 
        \item[$(iii)$] \emph{($\Delta$ and $e^{t\Delta}$ commute):} For all $f\in{\rm D}(\Delta)$ we have
         \begin{equation*}
             \Delta e^{t\Delta}f=e^{t\Delta}\Delta f\quad\mu-{\rm a.e.},\;\forall t>0.
         \end{equation*} 
     \end{itemize}
    
       Moreover if $(X,d,\mu)$ is an ${\rm RCD}(K,\infty)$ space we have the following additional properties:
     \begin{itemize}
       \item[$(iv)$] \emph{(Bakry-Émery estimate):} For all $f\in W^{1,2}(X)$ and $t>0$ we have
         \begin{equation}
             \label{Bakry-Émery}
             |\nabla e^{t\Delta}f|^2\leq e^{-2Kt}e^{t\Delta}\bigl(|\nabla f|^2\bigr)\quad\mu-{\rm a.e.}
         \end{equation} 
         \item[$(v)$] \emph{($L^\infty-{\rm Lip}$ regularization):} For all $f\in L^\infty(\mu)$ and $t>0$ we have
         \begin{equation}
             \label{Lip regularization}
             \| \nabla (e^{t\Delta}f )\|_{L^\infty(\mu)}\leq\frac{e^{-2Kt}}{\sqrt{t}} \| f \|_{L^\infty(\mu)}.
         \end{equation}
     \end{itemize}
     \end{proposition}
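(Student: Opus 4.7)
My plan is to dispatch the five items in order, leveraging the general $L^2$-gradient flow and semigroup theory for the first three and the full RCD machinery for the last two.

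For (i), the weak maximum principle, I would test against the Lyapunov-type functional $\phi(t) := \tfrac{1}{2}\int_X ((e^{t\Delta}f-C)^+)^2\, d\mu$. Local absolute continuity of $t\mapsto e^{t\Delta}f$ into $L^2$ with derivative $\Delta e^{t\Delta}f$, combined with the chain rule for $(\,\cdot\,-C)^+$ on the Cheeger energy in infinitesimally Hilbertian spaces, yields $\phi'(t) = -\int_X |\nabla(e^{t\Delta}f-C)^+|^2\, d\mu \le 0$. Since $\phi(0)=0$ and $\phi\ge 0$, we conclude $\phi\equiv 0$, i.e.\ $e^{t\Delta}f\le C$. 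For (ii), self-adjointness follows from the identification of $(e^{t\Delta})$ as the $L^2$-gradient flow of the symmetric bilinear Cheeger energy: the Hille--Yosida representation $e^{t\Delta} = \lim_n (I-(t/n)\Delta)^{-n}$ in the strong $L^2$ topology, together with the symmetry of each resolvent $(I-\lambda\Delta)^{-1}$ (a consequence of the symmetry of the form), preserves self-adjointness in the limit. For (iii), the commutation is the general semigroup fact that the generator $\Delta$ commutes with its semigroup on $\mathrm{D}(\Delta)$: one first verifies $(I-\lambda\Delta)^{-1}\Delta f = \Delta(I-\lambda\Delta)^{-1}f$ (both sides equal $\lambda^{-1}((I-\lambda\Delta)^{-1}f-f)$) and then passes to the limit in Hille--Yosida.

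Items (iv) and (v) are where the RCD$(K,\infty)$ assumption is genuinely used, and I expect them to be the main obstacle. For (iv), I would use the Bochner inequality $\tfrac{1}{2}\Delta|\nabla g|^2 - \langle\nabla g,\nabla\Delta g\rangle \ge K|\nabla g|^2$, which is one of the equivalent formulations of RCD$(K,\infty)$. Fix $t>0$ and for $s\in[0,t]$ set
\begin{equation*}
\Phi(s) := e^{s\Delta}\bigl(|\nabla e^{(t-s)\Delta}f|^2\bigr).
\end{equation*}
Using (iii) and the Leibniz rule to compute $\Phi'(s)$, the Bochner inequality applied to $g=e^{(t-s)\Delta}f$ gives $\Phi'(s)\ge 2K\Phi(s)$, hence $\Phi(t)\ge e^{2Kt}\Phi(0)$, which is exactly \eqref{Bakry-Émery}. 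The delicate point, which I would handle by approximation, is to justify these manipulations for $f\in W^{1,2}(X)$ only: one regularizes by applying the semigroup first and uses that $e^{\varepsilon\Delta}f\to f$ in $W^{1,2}$, together with the $L^2$-continuity of $e^{s\Delta}$.

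For (v), the strategy is the classical self-improvement argument. Differentiating $s\mapsto e^{s\Delta}((e^{(t-s)\Delta}f)^2)$ and integrating in $s\in[0,t]$ one gets the identity
\begin{equation*}
e^{t\Delta}(f^2) - (e^{t\Delta}f)^2 = 2\int_0^t e^{s\Delta}\bigl(|\nabla e^{(t-s)\Delta}f|^2\bigr)\, ds.
\end{equation*}
Applying (iv) to the inner integrand in the reverse direction, i.e.\ $|\nabla e^{t\Delta}f|^2 \le e^{-2Ks} e^{s\Delta}(|\nabla e^{(t-s)\Delta}f|^2)$, and integrating gives
\begin{equation*}
\frac{e^{2Kt}-1}{K}\,|\nabla e^{t\Delta}f|^2 \;\le\; e^{t\Delta}(f^2) - (e^{t\Delta}f)^2 \;\le\; \|f\|_{L^\infty(\mu)}^2,
\end{equation*}
where the last step uses (i). Solving for $|\nabla e^{t\Delta}f|^2$ and absorbing the $K$-dependence into a constant $C(K)$ (with the usual convention $\tfrac{e^{2Kt}-1}{K} = 2t$ if $K=0$) yields the stated estimate $\|\nabla e^{t\Delta}f\|_{L^\infty}\le C(K)t^{-1/2}\|f\|_{L^\infty}$. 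The only subtlety is that the estimate is first pointwise $\mu$-a.e., and one invokes the Lipschitz representative guaranteed on RCD spaces to upgrade it to the $L^\infty$-norm statement.
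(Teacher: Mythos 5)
Your approach is correct but genuinely different from what the paper does. The paper does not prove items $(i)$--$(iv)$ at all; it cites \cite{GiPas20} for them. The only item the paper proves is a version of $(v)$, and only in the special case $K=0$ on a smooth manifold with ${\rm Ric}\geq 0$: that proof is entirely ``extrinsic,'' based on the explicit Gaussian upper bound for $|\nabla H_M|$, the coarea formula, Bishop--Gromov, and Yau's volume growth, and the authors themselves remark that this argument is not dimension-free and does not cover ${\rm RCD}(0,\infty)$. Your proof of $(v)$ is the standard intrinsic $\Gamma$-calculus self-improvement: the variance identity $e^{t\Delta}(f^2)-(e^{t\Delta}f)^2=2\int_0^t e^{s\Delta}(|\nabla e^{(t-s)\Delta}f|^2)\,ds$, the Bakry--\'Emery estimate $(iv)$ applied pointwise in $s$, integration, and the weak maximum principle $(i)$. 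This is exactly the argument that works at the full ${\rm RCD}(K,\infty)$ generality claimed in the proposition, so your route is in fact better suited to the statement than the paper's own appendix proof. Your proofs of $(i)$--$(iv)$ are standard and correct (for $(i)$, a brief remark that $(e^{t\Delta}f-C)^+\in L^2\cap W^{1,2}$ is warranted, using $C\geq 0$ when $\mu(X)=\infty$, and for the argument on ${\rm Lip}_b$ truncations).

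One caveat you should flag more explicitly: the bound your computation actually produces is the sharp
\begin{equation*}
|\nabla e^{t\Delta}f|^2 \;\leq\; \frac{K}{e^{2Kt}-1}\,\|f\|_{L^\infty}^2 \,,
\end{equation*}
and the conversion to $C(K)/\sqrt{t}$ is only legitimate for $K\geq 0$ (where $e^{2Kt}-1\geq 2Kt$) or for $t$ in a bounded interval. For $K<0$ the right-hand side tends to the positive constant $|K|$ as $t\to\infty$, so no bound of the form $C(K)/\sqrt{t}$ can hold uniformly in $t>0$; indeed $\mathbb{H}^n$ is ${\rm RCD}(-(n-1),\infty)$ and carries nonconstant bounded harmonic functions $h$ with $e^{t\Delta}h=h$, so $\|\nabla e^{t\Delta}h\|_{L^\infty}$ does not decay at all. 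The paper's statement of $(v)$ inherits this imprecision (harmlessly, since it is only applied with $K=0$), but ``absorbing the $K$-dependence into $C(K)$'' as you phrase it silently discards valid information and should be replaced by the explicit constant $\sqrt{K/(e^{2Kt}-1)}$ or restricted to $K\geq 0$.
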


It is then possible to define the heat flow for all probability measures with finite second moment as the $EVI_{K}$ (again, we assume the reader to be familiar with the terminology) gradient flow of the entropy functional. More precisely for every $\mu\in\mathcal{P}_2(X)$, $e^{t\Delta}\mu$ (with a little abuse of notation here) is the unique measure such that 
\begin{equation*}
    \int_X \varphi de^{t\Delta}\mu=\int_Xe^{t\Delta}\varphi d\mu\quad\forall\varphi\in{\rm Lip}_{bs}(X),
\end{equation*}
    where ${\rm Lip}_{bs }(X)$ is the set of Lipschitz functions with bounded support and $e^{t\Delta}\varphi$ is the Lipschitz continuous representative of its equivalence class (which is well-posed thanks to the $L^\infty-{\rm Lip}$ regularization property). 

    On ${\rm RCD}(K,\infty)$ it is possible to define the heat kernel $H_X(x,\cdot,t):=\frac{de^{t\Delta}\delta_x}{d\mu}$ and we have the following (see \cite{JLZ16} for a reference):

    \begin{proposition}
        Let $(X,d,\mu)$ be an ${\rm RCD}(K,N)$ space with $N\in\mathbb{N}$, then for all $\epsilon >0$, for some $C_1,C_2,C_3,C_4$ nonnegative constants (possibly depending on $\epsilon$ and $N$) we have
        \begin{equation}
            \label{Heat kernel bounds}
            \frac{1}{C_1\mu(B_{\sqrt{t}}(y))}\exp{\biggl(-\frac{d^2(x,y)}{(4-\epsilon)t}-C_2t\biggr)} \leq H_X(x,y,t)\leq \frac{C_1}{\mu(B_{\sqrt{t}}(y))}\exp{\biggl(-\frac{d^2(x,y)}{(4+\epsilon)t}+C_2t\biggr)} 
        \end{equation}
        for all $x,y\in X$, $t>0$ and
        \begin{equation}
            \label{Gradient heat kernel bound}
            |\nabla H_X(x,\cdot,t)|(y)\leq\frac{C_3}{\sqrt{t}\mu(B_{\sqrt{t}}(y))}\exp{\biggl(\frac{-d^2(x,y)}{(4+\epsilon)t}-C_4t\biggr)}
        \end{equation}
        $\mu\times\mu$-a.e. $(x,y)\in X\times X$, for all $t>0$.

        \vsp 
     Moreover, if $K=0$ then estimate \eqref{Heat kernel bounds} holds with $C_2=C_4 = 0 $.
    \end{proposition}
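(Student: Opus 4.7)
The plan is to derive both \eqref{Heat kernel bounds} and \eqref{Gradient heat kernel bound} from the three defining features of the ${\rm RCD}(K,N)$ calculus already implicit above: the Bakry-\'Emery inequality \eqref{Bakry-Émery}, the local volume doubling, and the local $(1,2)$-Poincar\'e inequality. By Sturm's equivalence, doubling plus Poincar\'e yield a scale-invariant parabolic Harnack inequality (on small scales in general, on all scales when $K=0$), and this, combined with Bakry-\'Emery, is the standard recipe for Li--Yau--Grigor'yan--Saloff-Coste type heat kernel estimates.

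For the upper bound I would follow Davies's integrated maximum principle. Fixing $\xi \in {\rm Lip}(X)$ with $|\nabla \xi|\le 1$ and setting $\psi_t(y)=\xi(y)/\sqrt{t}$, a time-derivative computation of $\int (e^{t\Delta}f)^2 e^{2\lambda \psi_t}\,d\mu$ combined with \eqref{Bakry-Émery} to absorb the carr\'e du champ delivers an off-diagonal $L^2\to L^2$ estimate of Gaussian type. Combining this with the on-diagonal bound $H_X(x,x,t)\le C/\mu(B_{\sqrt t}(x))$, which follows from the Nash inequality induced by the $N$-dimensional Sobolev inequality available on ${\rm RCD}(K,N)$ with $N<\infty$, via Davies's trick $H_X(x,y,t)^2 \le H_X(x,x,t)\, H_X(y,y,t)\, e^{-d^2(x,y)/(4t)+Ct}$, yields the upper half of \eqref{Heat kernel bounds} with any exponent $1/(4+\varepsilon)$ after absorbing the error into the $e^{C_2 t}$ factor.

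Dually, I would obtain the lower bound by iterating the parabolic Harnack inequality for the nonnegative caloric function $(s,z)\mapsto H_X(x,z,s)$. The near-diagonal estimate $H_X(x,y,t)\ge c\, H_X(x,x,t/2)$ holds for $y\in B_{\sqrt t}(x)$, and for $d(x,y)\ge \sqrt{t}$ a chain of $\sim d^2(x,y)/t$ overlapping $\sqrt t$-balls produces a geometric constant raised to that power, translating into the Gaussian factor $\exp(-d^2(x,y)/((4-\varepsilon)t))$. The on-diagonal lower bound $H_X(x,x,t/2)\ge c/\mu(B_{\sqrt t}(x))$ in turn follows from the upper bound combined with the stochastic completeness identity $\int_X H_X(x,\cdot,t)\, d\mu = 1$, which is automatic on ${\rm RCD}(K,\infty)$ since the heat flow coincides with the $EVI_K$-gradient flow of the entropy.

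Finally, for \eqref{Gradient heat kernel bound} I would combine the semigroup identity $H_X(x,\cdot,t)=e^{(t/2)\Delta}H_X(x,\cdot,t/2)$ with \eqref{Lip regularization} to obtain a preliminary bound $|\nabla H_X(x,\cdot,t)|(y)\le C(K)\, t^{-1/2}\sup_z H_X(x,z,t/2)$, and then localize the supremum by a Davies-type exponential perturbation: multiplying by $e^{\lambda \xi}$ with $\xi$ a cutoff of $d(x,\cdot)$ and re-running the Bakry-\'Emery argument produces the required Gaussian tail. When $K=0$ the same arguments go through with scale-invariant doubling, Poincar\'e and Harnack constants, and with the prefactor $e^{-2Kt}$ in \eqref{Bakry-Émery} equal to $1$, so the corrections $C_2 t$ and $C_4 t$ in \eqref{Heat kernel bounds} and \eqref{Gradient heat kernel bound} disappear. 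The main obstacle is this last sharpening: a direct combination of \eqref{Lip regularization} with the upper Gaussian estimate only gives an exponent of order $1/((8+\varepsilon)t)$ in the gradient bound, and recovering $1/((4+\varepsilon)t)$ forces one to perform the Davies perturbation directly on the gradient semigroup, which is the technical heart of \cite{JLZ16}.
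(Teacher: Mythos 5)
The paper does not prove this proposition at all; it is stated as a known result with the citation ``see \cite{JLZ16} for a reference'', so there is no internal proof to compare against. Your blind sketch faithfully outlines the Li--Yau / Grigor'yan--Saloff-Coste scheme that \cite{JLZ16} implements on ${\rm RCD}(K,N)$ spaces: local doubling plus a local $(1,2)$-Poincar\'e inequality give the on-diagonal upper bound via a Nash inequality, Davies's integrated maximum principle supplies the off-diagonal Gaussian factor, a parabolic Harnack chain yields the two-sided estimate (with the on-diagonal lower bound recovered from the upper bound together with stochastic completeness on ${\rm RCD}(K,\infty)$), and a Davies exponential perturbation run directly on the gradient semigroup via the Bakry--\'Emery inequality \eqref{Bakry-Émery} gives \eqref{Gradient heat kernel bound}. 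You also correctly identify the crux: composing \eqref{Lip regularization} with the Gaussian upper bound at time $t/2$ degrades the exponent to order $1/((8+\epsilon)t)$, and recovering $1/((4+\epsilon)t)$ forces the Davies weighting at the level of $|\nabla e^{t\Delta}|$, which is the technical contribution of \cite{JLZ16}. As written, however, your proposal, like the paper itself, is a citation-level roadmap rather than a self-contained argument: the local Poincar\'e inequality on ${\rm RCD}(K,N)$, the scale-invariant parabolic Harnack inequality, and the sharp gradient estimate are all delegated to the literature rather than derived, so the proposal should be read as an accurate summary of \cite{JLZ16} rather than an independent proof.
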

On any ${\rm RCD}(K,\infty)$ space we have
\begin{equation*}
    \int_X H_X(x,y,t)d\mu(x)=1
\end{equation*}
for all $y\in X$, $t>0$. That is, $X$ is stochastically complete.

\vsp
    In the setting of ${\rm RCD}(K,N)$ (actually infinitesimal hilbertianity is not required) we also have Bishop-Gromov's comparison theorem, holding both for the perimeter measure and the volume measure (see \cite{KTSII}). Finally, it is possible to prove that the following version of the Harnack inequality holds (see \cite{FreeHarnack16} for the proof) 
    \begin{proposition}[Harnack inequality]\label{Harnack inequality}
    Let $(X,d,\mu)$ be an ${\rm RCD}(K,\infty)$ space, $p\in(1,\infty)$ and $f\in L^1(\mu)+ L^\infty(\mu)$, then 
    \begin{equation*} 
        |(e^{t\Delta}f)(x)|^p\leq (e^{t\Delta}|f|^p)(y)\exp\biggl(\frac{pKd^2(x,y)}{2(p-1)(e^{2Kt}-1)}\biggr)
    \end{equation*}
    for all $x,y\in X\times X$ and $t>0$.
        
    \end{proposition}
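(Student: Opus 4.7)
The plan is to follow the classical Wang argument via the Bakry--\'Emery gradient estimate, adapted to the RCD setting. First I would reduce to the case $f \geq 0$: since $P_t\delta_x$ is a probability measure (by stochastic completeness of any RCD$(K,\infty)$ space), Jensen's inequality gives $(P_tf)^p(x) \leq (P_t|f|)^p(x)$, while $P_t(|f|^p)(y) = P_t(f^p)(y)$. Fix $x,y\in\supp\mu$ and a constant-speed minimizing geodesic $\gamma:[0,1]\to X$ from $x$ to $y$ (such geodesics exist on RCD$(K,\infty)$ spaces). Choose an increasing $\beta:[0,1]\to[1,p]$ with $\beta(0)=1$, $\beta(1)=p$, to be determined, and consider the interpolation functional
\[
\Phi(s) \;:=\; \bigl(P_t(f^{\beta(s)})\bigr)^{p/\beta(s)}(\gamma(s)),
\]
so that $\Phi(0)=(P_tf)^p(x)$ and $\Phi(1)=P_t(f^p)(y)$. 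The target inequality is then precisely $\Phi(0) \leq \Phi(1)\cdot \exp\bigl(\tfrac{pKd(x,y)^2}{2(p-1)(e^{2Kt}-1)}\bigr)$, which will follow by showing $\tfrac{d}{ds}\log\Phi(s) \geq -C_K(t,\beta(s),\beta'(s))\,d(x,y)^2$ and integrating.

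Differentiating $\log \Phi$ in $s$ produces two kinds of terms: an \emph{exponent-derivative} contribution proportional to $\beta'(s)$ involving $P_t(f^{\beta}\log f)$, which has the good sign by Jensen, and a \emph{spatial} contribution bounded in absolute value by $d(x,y)\cdot |\nabla P_t(f^{\beta(s)})|/P_t(f^{\beta(s)})$ evaluated at $\gamma(s)$. The Bakry--\'Emery estimate yields $|\nabla P_t(f^\beta)|^2 \leq e^{-2Kt}P_t(|\nabla f^\beta|^2) = e^{-2Kt}\beta^2 P_t(f^{2\beta-2}|\nabla f|^2)$. A Cauchy--Schwarz step, $P_t(f^{2\beta-2}|\nabla f|^2)\cdot P_t(f^\beta) \geq [P_t(f^{\beta-1}|\nabla f|)]^2$, together with a Young inequality, lets one absorb the spatial term into a multiple of the entropy-like exponent term times $\beta'(s)$, leaving a residual of the form $\tfrac{p^2 e^{-2Kt}}{\beta(s)(\beta(s)-1)\beta'(s)}d(x,y)^2$. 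Choosing $\beta$ as the solution of the ODE that makes this residual a constant in $s$, and integrating over $[0,1]$, produces exactly the constant $\tfrac{pKd(x,y)^2}{2(p-1)(e^{2Kt}-1)}$ after optimization.

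The main obstacle in the non-smooth RCD setting is that the Bakry--\'Emery estimate holds only $\mu$-almost everywhere, whereas the curve $\gamma$ is $\mu$-negligible, so pointwise differentiation along $\gamma$ is not a priori meaningful. This is handled by exploiting the $L^\infty$--$\textrm{Lip}$ regularization of $P_t$ in RCD$(K,\infty)$: for bounded data $P_tf$ admits a canonical Lipschitz representative, and Savar\'e's self-improvement of Bakry--\'Emery lifts the gradient bound to a pointwise statement on that representative, so that differentiation of $s\mapsto P_t(f^{\beta(s)})(\gamma(s))$ is controlled by $|\nabla P_t(f^\beta)|$ at every point of $\gamma$. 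An elegant alternative that bypasses the ODE is the Kuwada duality: the Bakry--\'Emery estimate is equivalent to the $K$-contractivity of the dual heat semigroup on $\mathcal{P}_2(X)$, namely $W_2(P_t^\ast\mu_1,P_t^\ast\mu_2)\leq e^{-Kt}W_2(\mu_1,\mu_2)$. Applying this with $\mu_1=\delta_x$, $\mu_2=\delta_y$ produces a coupling of $P_t^\ast\delta_x$ and $P_t^\ast\delta_y$ with controlled $W_2$-cost, from which the Wang--Harnack inequality follows by a direct Gaussian-type exponential moment estimate. The hard part, in either route, is making each step fully rigorous in the RCD framework, which is where the technical content of \cite{FreeHarnack16} lies.
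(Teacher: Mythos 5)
The paper does not prove this proposition---it is imported verbatim from the reference \cite{FreeHarnack16}---so there is no in-paper argument to compare against. On its own terms, your sketch is a reasonable outline of the standard proof strategies for Wang's dimension-free Harnack inequality: the semigroup interpolation $\Phi(s)=(P_t(f^{\beta(s)}))^{p/\beta(s)}(\gamma(s))$ along a geodesic, and the alternative route via Kuwada duality and $W_2$-contraction of the dual heat flow, both appear in the smooth and metric-measure literature, and the cited reference proceeds via the latter. You correctly identify the central obstruction to running the interpolation argument verbatim in the ${\rm RCD}$ setting: the Bakry--\'Emery bound is a $\mu$-a.e.\ statement, while the geodesic $\gamma$ is $\mu$-negligible, so pointwise differentiation of $s\mapsto\Phi(s)$ requires more than the raw gradient estimate; this is precisely the technical work the cited reference supplies. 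Two minor caveats. First, the reduction $|(P_tf)(x)|^p\le (P_t|f|)^p(x)$ is a consequence of positivity preservation and linearity ($|P_tf|\le P_t|f|$ pointwise), not of Jensen's inequality. Second, the assertion that the inequality follows from the $W_2$-contraction ``by a direct Gaussian-type exponential moment estimate'' compresses a genuinely nontrivial step: one still needs either a log-Harnack intermediate inequality or a refined coupling argument to extract the exact constant $\frac{pK}{2(p-1)(e^{2Kt}-1)}$, rather than merely some constant of the same qualitative form.
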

    From the previous Harnack inequality, it is possible to prove the following Gaussian bound (see \cite[Theorem 4.1]{tamanini2019harnack}) for ${\rm RCD}(K,\infty)$ spaces (compare with \eqref{Heat kernel bounds} above for ${\rm RCD}(K,N)$ spaces).
    
    \begin{proposition}
        Let $(X,d,\mu)$ be an ${\rm RCD}(K,\infty)$ space, then there exists $C_K>0$ and for all $\varepsilon>0$ there exists $C_\varepsilon>0$ such that 
        \begin{equation}
            \label{Gaussian upper bound infinite}
            H_{X}(x,y,t)\leq \frac{1}{\sqrt{\mu(B_{\sqrt{t}}(x))}\sqrt{\mu(B_{\sqrt{t}}(y))}}\exp\biggl(C_\varepsilon(1+C_K)t-\frac{d^2(x,y)}{(4+\varepsilon)t}\biggr).
        \end{equation}
        If $K\geq 0$ one can take $C_K=0$.
    \end{proposition}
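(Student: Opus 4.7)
The proof follows the classical Davies--Grigor'yan--Saloff-Coste scheme, combining the parabolic Harnack inequality of Proposition~\ref{Harnack inequality} with stochastical completeness and the Bakry--\'Emery estimate from Proposition~\ref{heat flow properties}. First, the semigroup property and the self-adjointness of $e^{t\Delta}$ yield, via Cauchy--Schwarz, the standard reduction
\[
H_X(x,y,t)=\int_X H_X(x,z,t/2)H_X(y,z,t/2)\,d\mu(z)\le\sqrt{H_X(x,x,t)\,H_X(y,y,t)}\,,
\]
after noting $\int_X H_X(x,z,t/2)^2\,d\mu(z)=H_X(x,x,t)$. This splits the task into (a) an on-diagonal bound $H_X(x,x,t)\le C_\varepsilon e^{C_\varepsilon(1+C_K)t}/\mu(B_{\sqrt t}(x))$ and (b) an off-diagonal Gaussian factor $\exp(-d^2(x,y)/((4+\varepsilon)t))$, to be treated separately.

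For (a), apply Proposition~\ref{Harnack inequality} with $p=2$ to $f=H_X(x,\cdot,t/2)$, so that $(e^{(t/2)\Delta}f)(x)=H_X(x,x,t)$. For $y\in B_{\sqrt t}(x)$, since $d^2(x,y)\le t$, the Harnack inequality gives $H_X(x,x,t)^2\le e^{C(1+C_K)t}(e^{(t/2)\Delta}f^2)(y)$. Averaging over $y\in B_{\sqrt t}(x)$, swapping integrals, and using $\int_X H_X(y,z,t/2)\,d\mu(y)\le 1$ (by stochastical completeness) produces
\[
H_X(x,x,t)^2\le\frac{e^{C(1+C_K)t}}{\mu(B_{\sqrt t}(x))}\int_X H_X(x,z,t/2)^2\,d\mu(z)=\frac{e^{C(1+C_K)t}\,H_X(x,x,t)}{\mu(B_{\sqrt t}(x))}\,,
\]
yielding the on-diagonal estimate.

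For (b), one runs the Davies perturbation argument: fix a $1$-Lipschitz function $\psi$ on $X$ and $\lambda\in\R$, set $\Phi(s)=\int_X e^{2\lambda\psi}H_X(x,\cdot,s)^2\,d\mu$, and differentiate in $s$. Integration by parts, together with the Bakry--\'Emery estimate~\eqref{Bakry-Émery} and $|\nabla\psi|\le 1$, absorbs the cross term and delivers $\Phi'(s)\le 2\lambda^2 e^{-2Ks}\Phi(s)$ (modulo a nonpositive gradient contribution). Integrating and specializing $\psi$ to a Lipschitz version of $d(y_0,\cdot)-d(y_0,x)$ yields
\[
\int_X e^{2\lambda(d(y_0,y)-d(y_0,x))}H_X(x,y,t)^2\,d\mu(y)\le e^{2\lambda^2(1+C_K)t},
\]
and optimizing $\lambda$ in terms of $d(x,y_0)$, $t$ and $\varepsilon$ produces the Gaussian factor $\exp(-d^2(x,y_0)/((4+\varepsilon)t))$. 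Coupled with (a) through the Cauchy--Schwarz reduction and applied symmetrically in $y$, this gives the claimed bound; when $K\ge 0$ the correction $e^{-2Ks}\le 1$ and $C_K=0$ is admissible.

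The principal obstacle is the fine optimization in (b): one must choose $\lambda$ and absorb the curvature-dependent factor $e^{-2Ks}$ (unbounded when $K<0$) so that the Davies constant is made arbitrarily close to $1/(4+\varepsilon)$ for every prescribed $\varepsilon>0$, rather than settling for a suboptimal $d^2/(8t)$. This is handled either by a dyadic splitting of the interval $[0,t]$ and iteration of the weighted semigroup, or by interpolating the on-diagonal prefactor of (a) against the weighted estimate of (b), with the residual constants all collected into $C_\varepsilon(1+C_K)$ on the right-hand side.
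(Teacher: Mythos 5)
The paper does not supply its own proof of this proposition: it is stated as a known result with a pointer to Tamanini's Theorem~4.1 in \cite{tamanini2019harnack}, and the surrounding text only remarks that the bound can be derived from the Harnack inequality of Proposition~\ref{Harnack inequality}. So there is no internal proof to compare against; what can be assessed is whether your sketch is sound on its own terms.

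Your sketch has the right ingredients but a genuine structural gap. The opening reduction
\begin{equation*}
    H_X(x,y,t)\le\sqrt{H_X(x,x,t)\,H_X(y,y,t)}
\end{equation*}
via unweighted Cauchy--Schwarz does \emph{not} ``split the task into (a) and (b) to be treated separately'': it \emph{discards} the Gaussian factor outright. Combining it with the on-diagonal bound from (a) yields only $H_X(x,y,t)\le e^{C(1+C_K)t}/\sqrt{\mu(B_{\sqrt{t}}(x))\mu(B_{\sqrt{t}}(y))}$, with no $\exp(-d^2/(4+\varepsilon)t)$ term, and there is no way to ``multiply in'' the output of the Davies step (b) afterwards because the semigroup decomposition has already been consumed. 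To make the scheme work one must instead insert the Davies weight \emph{inside} the Cauchy--Schwarz, e.g.
\begin{equation*}
    H_X(x,y,t)\le\Bigl(\int_X e^{2\lambda\psi}H_X(x,\cdot,t/2)^2\,d\mu\Bigr)^{1/2}\Bigl(\int_X e^{-2\lambda\psi}H_X(y,\cdot,t/2)^2\,d\mu\Bigr)^{1/2},
\end{equation*}
and then couple the weighted $L^2$ evolution with a \emph{weighted} (or time-shifted) version of the on-diagonal bound; this is the genuine crux, which you correctly flag as ``the principal obstacle'' but only gesture at. As sketched, the two halves do not actually fit together. A smaller point: in the classical Davies computation the cross term is absorbed by Young's inequality and integration by parts alone, giving $\Phi'(s)\le 2\lambda^2\Phi(s)$ with no curvature-dependent factor; invoking Bakry--\'Emery to produce $e^{-2Ks}$ only weakens the differential inequality when $K<0$, and the correct place where the curvature enters is in the Harnack exponent of step (a), which you do handle properly.
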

    The second ingredient we need is a generalization to ${\rm RCD}(K,\infty)$ spaces of the $L^2-{\rm Liouville}$ property of Yau (our Theorem \ref{dfsdfs}).
    \begin{proposition}
    \label{Generalized Yau}
        Let $(X,d,\mu)$ be an ${\rm RCD}(K,\infty)$ space. Then, any $L^2(\mu)$ harmonic function is constant.
    \end{proposition}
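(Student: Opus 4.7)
The plan is to transpose the proof of Yau's theorem (our Theorem \ref{dfsdfs}) to the ${\rm RCD}$ setting by deriving a Caccioppoli-type inequality and letting the radius tend to infinity. Since $u\in L^2(\mu)$ is harmonic (i.e.\ $u\in D(\Delta)$ with $\Delta u=0$), it automatically belongs to $W^{1,2}(X)$ and has a well-defined minimal weak upper gradient $|\nabla u|\in L^2(\mu)$. Moreover, infinitesimal hilbertianity of ${\rm RCD}(K,\infty)$ spaces provides the bilinear pairing $\langle\nabla f,\nabla g\rangle$ on $W^{1,2}(X)\times W^{1,2}(X)$ together with the Leibniz rule and the integration-by-parts identity
\begin{equation*}
\int_X \Delta u\cdot v\,d\mu=-\int_X\langle\nabla u,\nabla v\rangle\,d\mu\quad\forall\,u\in D(\Delta),\ v\in W^{1,2}(X).
\end{equation*}

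Next, I would fix $x_0\in X$ and, for each $R>0$, take the Lipschitz cut-off
\begin{equation*}
\phi_R(x):=\min\Bigl\{1,\tfrac{1}{R}\bigl(2R-d(x,x_0)\bigr)_+\Bigr\},
\end{equation*}
which satisfies $0\le\phi_R\le1$, $\phi_R\equiv1$ on $B_R(x_0)$, $\operatorname{supp}\phi_R\subset\overline{B_{2R}(x_0)}$, and $|\nabla\phi_R|\le 1/R$ $\mu$-a.e.\ (because $d(\cdot,x_0)$ is $1$-Lipschitz). Then $\phi_R^2u\in W^{1,2}(X)$ has bounded support, and testing the equation $\Delta u=0$ against it, using the Leibniz rule $\nabla(\phi_R^2u)=\phi_R^2\nabla u+2\phi_Ru\,\nabla\phi_R$, yields
\begin{equation*}
0=\int_X\phi_R^2|\nabla u|^2\,d\mu+2\int_X\phi_R u\,\langle\nabla u,\nabla\phi_R\rangle\,d\mu.
\end{equation*}
Bounding the cross term via Cauchy--Schwarz and Young's inequality produces the Caccioppoli-type estimate
\begin{equation*}
\int_{B_R(x_0)}|\nabla u|^2\,d\mu\le \int_X\phi_R^2|\nabla u|^2\,d\mu\le \frac{4}{R^2}\int_{B_{2R}(x_0)}u^2\,d\mu\le\frac{4}{R^2}\|u\|_{L^2(\mu)}^2.
\end{equation*}

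Letting $R\to\infty$ forces $|\nabla u|=0$ $\mu$-a.e.\ on $X$. Since every ${\rm RCD}(K,\infty)$ space is a length space, and in particular connected, the vanishing of the weak upper gradient implies that $u$ is $\mu$-a.e.\ constant, which is the claim. The main obstacle is the rigorous justification of the Leibniz rule and the validity of the test-function identity in the non-smooth setting, but these are by now standard consequences of infinitesimal hilbertianity and the Sobolev calculus developed by Ambrosio--Gigli--Savaré; no regularity of $u$ beyond $D(\Delta)$ is needed, and no sign assumption on $K$ enters the argument.
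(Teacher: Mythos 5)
Your proof is correct in its overall architecture, but it takes a genuinely different route from the paper. The paper applies the heat flow: it sets $w:=e^{t\Delta}u$, uses item $(iii)$ of Proposition \ref{heat flow properties} to get $\Delta w=0$, uses the standard gradient-flow estimate $\int_X|\nabla w|^2\,d\mu\le\tfrac{1}{2t}\|u\|^2_{L^2}$ to guarantee $w\in W^{1,2}(X)$, then integrates by parts \emph{globally} to get $|\nabla w|=0$, deduces $w(t,\cdot)=C(t)$ via the Sobolev-to-Lipschitz property, and finally recovers $u$ by letting $t\to0^+$ (splitting the finite- and infinite-volume cases). Your approach instead transposes Yau's Caccioppoli argument verbatim, working directly on $u$ with Lipschitz cutoffs. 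Both are valid. Note, however, that once you assume (as you explicitly do) that $u\in D(\Delta)$ with $\Delta u=0$, you already know $u\in W^{1,2}(X)$ with $|\nabla u|\in L^2(\mu)$, so the cutoff is unnecessary: the defining integration-by-parts identity with $v=u$ gives at once $0=\int_X u\,\Delta u\,d\mu=-\int_X|\nabla u|^2\,d\mu$. The cutoffs become genuinely useful only under the weaker hypothesis that $u$ is merely locally harmonic (locally in $W^{1,2}$ with $\Delta u=0$ in the weak local sense); in that case the Caccioppoli estimate is exactly what establishes the global $L^2$ bound on $|\nabla u|$, and your proof would then be strictly more general than the paper's.

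There is one genuine (though fixable) gap in the last step. You claim that vanishing of the minimal weak upper gradient implies constancy ``since every ${\rm RCD}(K,\infty)$ space is a length space, and in particular connected.'' This is not the right justification: in a general metric measure space, even a connected length space, $|\nabla u|=0$ $\mu$-a.e.\ does \emph{not} imply that $u$ is constant, because the minimal weak upper gradient can degenerate when the reference measure is too singular or the test plans too poor. What is needed is the Sobolev-to-Lipschitz property (if $u\in W^{1,2}$ with $|\nabla u|\le L$ $\mu$-a.e., then $u$ has an $L$-Lipschitz representative), which is a non-trivial consequence of the ${\rm RCD}$ condition and precisely what the paper invokes at this point; see also the paper's remark immediately after the proof, which isolates exactly this property as the decisive hypothesis. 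Replace the appeal to the length space structure by an appeal to the Sobolev-to-Lipschitz property and your proof is complete.
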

    \begin{proof}
       Denote $w(t,x):=e^{t\Delta}u (x)$. Assume $u\in L^2(\mu)$ is harmonic, then by applying the heat flow to $\Delta u=0$ and using item $(iii)$ of Proposition \ref{heat flow properties} we have 
        \begin{equation*}
            \Delta w =0.
        \end{equation*}
       By gradient flow theory we have
        \begin{equation*}
            \int_{X}|\nabla w|^2d\mu\leq\frac{1}{2t}\int_X|u|^2d\mu,
        \end{equation*}
        whence
        \begin{equation*}
            0=-\int_X w \Delta w \, d\mu = \int_X |\nabla w |^2d\mu.
        \end{equation*}
        This means $|\nabla w|=0$ $\mu$-a.e. and by the Sobolev to Lipschitz property, this implies that $w$ is constant. Therefore there exists $C=C(t)$ such that $w(t, \cdot )=C(t)$. 
        
        Now if $\mu(X)<+\infty$ we can infer (as $u\in L^2(\mu)$ implies $u\in L^1(\mu)$)
        \begin{equation*}
            \int_{X} w\, d\mu = \int_{X} u \, d\mu = \mu(X)C(t),
        \end{equation*}
        hence $C$ does not actually depend on $t$ and by taking the limit as $t\to 0^+$ we infer that $u$ is constant. 
        
        If $\mu(X)=+\infty$, then for every $t$, we have $w=0$ because the only constant in $L^2(\mu)$ is zero, and we conclude.
    \end{proof}
    \begin{remark}
        The previous proposition actually does not require a curvature condition: working in a space in which having zero weak upper gradient implies being constant suffices.
    \end{remark}
We then have the following result, which is a non-smooth analog of Proposition \ref{gryg}.
\begin{proposition}
    Let $(X,d,\mu)$ be an ${\rm RCD}(K,\infty)$ space, then we have the following dicotomy:
    \begin{itemize}
        \item[$(i)$] If $\mu(X)< + \infty$ then 
        \begin{equation*}
            H_X(t,x,y)\to\frac{1}{\mu(X)}\quad{\rm as}\;t\to\infty\;\;\forall x,y\in X.
        \end{equation*} 
        \item[$(ii)$] If $\mu(X)= + \infty$ we have
        \begin{equation}
        \label{unifconvatinf}
            H_{X}(\cdot,\cdot,t)\to 0\quad{\rm as}\;t\to\infty
        \end{equation}
         locally uniformly and $H_X(p,\cdot,t)\to 0$ uniformly as $t\to\infty$ for every $p\in M$.
    \end{itemize}
\end{proposition}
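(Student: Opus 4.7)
The plan is to mimic the proof of Lemma \ref{gryg} from the smooth case, replacing each tool with its ${\rm RCD}$ counterpart: namely the spectral theorem for the self-adjoint operator $-\Delta$ on $L^2(\mu)$, the $L^2$--Liouville property (Proposition \ref{Generalized Yau}), the Harnack inequality (Proposition \ref{Harnack inequality}), and the Gaussian bounds \eqref{Heat kernel bounds}--\eqref{Gaussian upper bound infinite}. First I would introduce the spectral resolution $\{E_\lambda\}_{\lambda\ge 0}$ of $-\Delta$, so that for every $f\in L^2(\mu)$,
\begin{equation*}
\langle e^{t\Delta}f,f\rangle=\int_{0}^{\infty}e^{-t\lambda}\,d\langle E_\lambda f,f\rangle\xrightarrow[t\to\infty]{}\langle E_0 f,f\rangle
\end{equation*}
by dominated convergence. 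The range of $E_0$ consists of the $L^2$--harmonic functions, which by Proposition \ref{Generalized Yau} reduces to the constants. Hence $E_0 f=\mu(X)^{-1}\int_X f\,d\mu$ when $\mu(X)<+\infty$ and $E_0=0$ when $\mu(X)=+\infty$.

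Using the self-adjointness of the semigroup (item $(ii)$ of Proposition \ref{heat flow properties}), one has $\langle e^{t\Delta}f,f\rangle=\|e^{(t/2)\Delta}f\|^2_{L^2}$, so the weak convergence just obtained upgrades to strong $L^2$--convergence of $e^{t\Delta}f$ to the corresponding limit. Applied to $f=\chi_{B_R(p)}$ this gives
\begin{equation*}
\iint_{B_R(p)\times B_R(p)}H_X(x,y,t)\,d\mu(x)d\mu(y)\xrightarrow[t\to\infty]{}
\begin{cases}\mu(B_R(p))^2/\mu(X),&\mu(X)<+\infty,\\ 0,&\mu(X)=+\infty.\end{cases}
\end{equation*}

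Next I would upgrade this averaged convergence to pointwise (and locally uniform) convergence of the heat kernel itself. Fix $p\in X$ and $R$ small, and consider the function $u_t(x):=H_X(x,y,t)$ (or $H_X(x,y,t)-1/\mu(X)$ in the finite-volume case), which solves the heat equation. Applying the Harnack inequality of Proposition \ref{Harnack inequality} twice (in $x$ and then in $y$, with time increment of order $1$), one obtains, exactly as in the proof of Lemma \ref{gryg},
\begin{equation*}
\sup_{x,y\in B_R(p)}|u_t(x,y)|\le C(R,p)\,\langle e^{(t+1)\Delta}\chi_{B_R(p)},\chi_{B_R(p)}\rangle\big/\mu(B_R(p))^2,
\end{equation*}
where the constant $C(R,p)$ absorbs the exponential factor appearing in the RCD Harnack inequality (it is bounded for $t\ge 1$ and bounded radii). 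This yields local uniform convergence on $B_R(p)$, and covering any bounded subset by finitely many such balls gives the locally uniform statement in both cases.

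Finally, for the global uniform convergence $\sup_{x\in X}H_X(x,p,t)\to 0$ in the infinite-volume case, I would use the semigroup identity together with Cauchy--Schwarz exactly as in Lemma \ref{gryg}:
\begin{equation*}
H_X(x,p,t)=\int_X H_X(x,z,t/2)H_X(p,z,t/2)\,d\mu(z)\le \sqrt{H_X(p,p,t)}\,\|H_X(x,\cdot,t/2)\|_{L^2(\mu)}.
\end{equation*}
The first factor tends to $0$ by the local uniform convergence at $p$; the second is uniformly bounded in $x\in X$ via the $L^2$--contraction $\|e^{s\Delta}f\|_{L^2}\le\|f\|_{L^2}$ applied to $f=H_X(x,\cdot,1)$, which is in $L^2(\mu)$ uniformly in $x$ thanks to the Gaussian upper bound \eqref{Gaussian upper bound infinite}. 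The main obstacle is precisely this last step: verifying the $L^2$--bound uniformly in $x$ for the starting datum $H_X(x,\cdot,1)$ requires integrating the Gaussian estimate, which in the general ${\rm RCD}(K,\infty)$ setting (without an upper dimension bound) must be done carefully through \eqref{Gaussian upper bound infinite}, as the doubling property is not available.
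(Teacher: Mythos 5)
Your overall plan is aligned with the paper's: $L^2$--convergence via spectral calculus plus the $L^2$--Liouville property (Proposition~\ref{Generalized Yau}) and self-adjointness, then upgrade by Harnack, then Cauchy--Schwarz for the global statement. However, the Harnack step as you describe it has a genuine gap. Proposition~\ref{Harnack inequality} is a Wang-type, \emph{one-sided} $L^p$-Harnack: it compares $|e^{t\Delta}f(x)|^p$ with $(e^{t\Delta}|f|^p)(y)$ at the \emph{same} time $t$, with a multiplicative exponential penalty in $d(x,y)$. It is not the classical two-sided parabolic Harnack with a time increment used in the proof of Lemma~\ref{gryg}, where one bounds $\inf_{B_R} H_M(\cdot,\cdot,t)$ from below by $C^{-1}\sup_{B_R} H_M(\cdot,\cdot,t-1)$. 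There is no ``apply it twice, once in $x$ and once in $y$, with time increment of order $1$'' available here, and the inequality
\begin{equation*}
\sup_{x,y\in B_R(p)}H_X(x,y,t)\le C(R,p)\,\langle e^{(t+1)\Delta}\chi_{B_R(p)},\chi_{B_R(p)}\rangle\big/\mu(B_R(p))^2
\end{equation*}
does not follow from the stated proposition. On a general ${\rm RCD}(K,\infty)$ space (no finite dimension, hence no doubling) the two-sided parabolic Harnack need not hold at all.

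The paper's proof resolves this by a different deployment of the Wang Harnack. Instead of $f=\chi_{B_R(p)}$, it fixes $p$ and $\varepsilon>0$ and takes $f=H_X(p,\cdot,\varepsilon)\in L^1\cap L^\infty$, so that $e^{t\Delta}f=H_X(p,\cdot,t+\varepsilon)$ is directly the heat kernel at base point $p$. Applying Proposition~\ref{Harnack inequality} with $p=2$, then using the pointwise bound $|f|^2\le\|f\|_{L^\infty}|f|$ and the weak maximum principle, yields
\begin{equation*}
|e^{t\Delta}f(x)|^2\le \|f\|_{L^\infty}\,(e^{t\Delta}|f|)(y)\exp\Bigl(\tfrac{2KR^2}{2(e^{2Kt}-1)}\Bigr)\quad\text{for all } y\in B_R(x),
\end{equation*}
and integrating over $y\in B_R(x)$, the right-hand side tends to $0$ (the integral is $\langle e^{t\Delta}|f|,\chi_{B_R(x)}\rangle$, controlled by the already-established $L^2$-convergence). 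This gives the locally uniform decay of $H_X(p,\cdot,t)$ for each fixed $p$, and the full locally uniform statement then follows by covering. So while your starting point $f=\chi_{B_R(p)}$ does reproduce the weak-to-strong $L^2$ step faithfully, it only controls the doubly averaged quantity $\iint_{B_R\times B_R}H_X$, and bridging to the pointwise kernel by a sup--inf Harnack is precisely the step you cannot reproduce with the tools stated in the paper. You should instead adopt the paper's choice of datum $f=H_X(p,\cdot,\varepsilon)$ and the one-sided Harnack plus $L^\infty$-dominance trick. (Your remark in the last paragraph, that the uniform-in-$x$ bound on $\|H_X(x,\cdot,1)\|_{L^2}$ is delicate without doubling, is a legitimate concern that the paper also glosses over with ``follows along the same lines of the smooth case.'')
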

\begin{proof}
   The proof follows along the same lines of Proposition \ref{gryg}. If $\mu(X)<+\infty$ let $f=H_{X}(p,\cdot,1)-1/\mu(X)$, otherwise let $f=H_{X}(p,\cdot,1)$, then 
    $\max\{\|e^{t\Delta}f\|_{L^1},\|e^{t\Delta}f\|_{L^\infty}\}\leq C$ due to the properties of the heat flow. Moreover by the semigroup property of it is easy to see that weak convergence in $L^2(\mu)$ of $e^{t\Delta}f$ is equivalent to strong convergence and we again have the inequality
    \begin{equation*}
        |(e^{t\Delta}f,g)|\leq |(e^{t\Delta}f,f)\|(e^{t\Delta}g.g)|\leq \|g\|^2_{L^2(\mu)}|(e^{t\Delta}f,f)|
    \end{equation*}
    for all $t\in (0,\infty)$ and for all $f,g\in L^2(\mu)$. Using the spectral measure representation and Proposition \ref{Generalized Yau}, we infer the desired $L^2$ convergence. This convergence can be upgraded to be locally uniform by the Harnack inequality (Proposition \ref{Harnack inequality}) with $p=2$ and by the fact that $|f|^2\leq \|f\|_{L^\infty}|f|$, together with the maximum principle to get
    \begin{equation*}
       |e^{t\Delta} f(x)|^2\leq \|f\|_{L^\infty}e^{t\Delta}(|f|)(y)\exp\biggl(\frac{2KR^2}{2(e^{2Kt}-1)}\biggr)
    \end{equation*}
for every $y\in B_R(x)$. Integrating over the latter set in $d\mu(y)$ and taking the supremum allows to conclude. The global uniform convergence follows as in the smooth case.
\end{proof}
\begin{remark}
    As in the smooth case if $\mu(X) = +\infty$ we have that for every $f\in L^1(\mu)$
    \begin{equation*}
        \lim_{t\to\infty}e^{t\Delta}f(x)=0
    \end{equation*}
    for every $x\in X$.
\end{remark}

We refer to \cite{Sire} for an introduction to $H^s$ spaces on very general ambient space, like ${\rm RCD}$ spaces and more. We have the analog of Theorem \ref{globlimcomp}. 
\begin{theorem}
    Let $(X,d,\mu)$ be an ${\rm RCD}(K,\infty)$ space with $K\in\mathbb{R}$ and $\mu(X)< + \infty$. Let $u\in H^{s_\circ/2}(X)$ for some $s_\circ \in (0,1)$ with bounded support. Then 
    \begin{equation*}
        \lim_{s\to 0^+}\frac{1}{2}[u]_{H^{s/2}(X)}^2=\|u\|_{L^2(X)}^2-\frac{1}{\mu(M)}\biggl(\int_Xu d\mu\biggr)^2.
    \end{equation*}
\end{theorem}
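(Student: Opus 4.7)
The plan is to adapt the spectral-theoretic proof of Theorem \ref{globlimcomp} to the nonsmooth setting. All the tools we need are available on an ${\rm RCD}(K,\infty)$ space $X$: the Laplacian $-\Delta$ is a nonnegative self-adjoint operator on $L^2(\mu)$ generating the heat semigroup $e^{t\Delta}$, the space is automatically stochastically complete, and Proposition \ref{Generalized Yau} provides the $L^2$-Liouville property.

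First, I would apply the spectral theorem to $-\Delta$: this gives a resolution of the identity $\{E_\lambda\}_{\lambda \geq 0}$ such that $d\langle E_\lambda u, u\rangle$ is a finite positive Borel measure on $[0,\infty)$ with total mass $\|u\|_{L^2}^2$, and $-\Delta = \int_0^\infty \lambda\, dE_\lambda$. Since $\mu(X) < \infty$, the constant $\phi_0 := \mu(X)^{-1/2}$ lies in $L^2$ and is harmonic, so by Proposition \ref{Generalized Yau} it spans $\ker(-\Delta)$ in $L^2$. In particular $E_0$ is the orthogonal projection onto $\mathbb{R}\phi_0$, and
\begin{equation*}
\langle E_0 u, u\rangle = |\langle u, \phi_0\rangle_{L^2}|^2 = \frac{1}{\mu(X)}\left(\int_X u\, d\mu\right)^{2}.
\end{equation*}

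Second, I would identify the seminorm spectrally. Combining the singular-integral definition of $\mathcal{K}_{2s}$ with the Bochner representation
\begin{equation*}
(-\Delta)^{s/2} u = \frac{1}{\Gamma(-s/2)} \int_0^\infty (e^{t\Delta} u - u)\, \frac{dt}{t^{1+s/2}},
\end{equation*}
which is unambiguous here thanks to stochastic completeness, the elementary identity $\frac{1}{\Gamma(-s/2)}\int_0^\infty(e^{-t\lambda}-1)\,t^{-1-s/2}\,dt = \lambda^{s/2}$ for $\lambda \geq 0$, and Fubini at the level of spectral measures, yields, for every $s \in (0, s_\circ)$,
\begin{equation*}
\tfrac{1}{2}[u]^2_{H^{s/2}(X)} = \int_X u\, (-\Delta)^{s/2} u\, d\mu = \int_{(0,\infty)} \lambda^{s/2}\, d\langle E_\lambda u, u\rangle.
\end{equation*}
Note that the $\lambda=0$ contribution vanishes automatically, reflecting the fact that the singular-integral seminorm of a constant is zero. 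I would then let $s \to 0^+$ by dominated convergence on the spectral integral: $\lambda^{s/2} \to \chi_{(0,\infty)}(\lambda)$ pointwise, and the integrand is dominated uniformly in $s \in (0, s_\circ)$ by $\max(1, \lambda^{s_\circ/2})$, which is $d\langle E_\lambda u, u\rangle$-integrable since $u \in L^2 \cap H^{s_\circ/2}(X)$. This yields
\begin{equation*}
\lim_{s \to 0^+}\tfrac{1}{2}[u]^2_{H^{s/2}(X)} = \|u\|^2_{L^2(X)} - \langle E_0 u, u\rangle = \|u\|^2_{L^2(X)} - \frac{1}{\mu(X)}\left(\int_X u\, d\mu\right)^{2},
\end{equation*}
which is the stated identity.

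The main obstacle I anticipate is the RCD analogue of Corollary \ref{corolequivalence}, that is, verifying rigorously that on ${\rm RCD}(K,\infty)$ spaces the singular-integral $(-\Delta)^{s/2}$ and its spectral counterpart agree in the bilinear pairing so that the Fubini step above is legitimate. For this one needs the joint integrability of $|e^{-t\lambda}-1|\,t^{-1-s/2}$ against $dt\, d\langle E_\lambda u, u\rangle$: the hypothesis $u \in H^{s_\circ/2}$ controls the regime $\lambda \to \infty$, while the elementary bound $1 - e^{-t\lambda} \leq \min(1, t\lambda)$ handles the $t$-integral. Once this equivalence is in place the argument transfers verbatim from the Riemannian case; in particular the bounded-support hypothesis on $u$ does not seem to play an essential role in this statement (it is presumably inherited from the approximation scheme used elsewhere in Section \ref{extension section}).
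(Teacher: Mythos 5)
Your proposal is correct and follows essentially the same route as the paper: the paper's proof of this RCD statement is the one-line remark that "the proof is exactly the same as in the smooth case exploiting the $L^2$-Liouville property of Proposition \ref{Generalized Yau}," and you have simply unpacked that spectral-theoretic argument (spectral resolution of $-\Delta$, identification of $E_0$ via the generalized Yau theorem, the spectral identity for the seminorm through Corollary \ref{corolequivalence}, and passage to the limit under the integral). Your side remark that the bounded-support hypothesis plays no real role here is also accurate, since the smooth analogue (Theorem \ref{globlimcomp}) is stated without it.
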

\begin{proof}
    The proof is exactly the same as in the smooth case exploiting the $L^2-{\rm Liouville}$ property of Proposition \ref{Generalized Yau}.
\end{proof}
To prove the convergence result for the case of infinite volume we need a convergence result for the solution of the heat equation to the initial datum.
We, therefore, recall the following (upper) Large Deviation Principle on proper ${\rm RCD}(K,\infty)$ spaces (see \cite[Theorem 5.3]{gigli2022viscosity})
\begin{theorem}
    Let $(X,d,\mu)$ be a proper ${\rm RCD}(K,\infty)$ space, then for every $x\in X$ and closed set $C\subseteq X$ we have, setting $\mu_t[x]=H_X(\cdot,x,t)\mu$,
    \begin{equation}
        \label{Large Deviation}
        \limsup_{t\to 0}t\log(\mu_t[x](C))\leq -\inf_{y\in C}\frac{d^2(x,y)}{4}.
    \end{equation}
\end{theorem}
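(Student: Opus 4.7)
The plan is to derive the upper bound directly from the Gaussian heat kernel estimate \eqref{Gaussian upper bound infinite}, which already encodes the sharp rate $d^2(x,y)/(4+\varepsilon)t$ appearing on the right-hand side. The strategy is to integrate this pointwise bound over $C$ via a shell decomposition around $x$, and to verify that every prefactor other than the Gaussian tail contributes only $e^{o(1/t)}$ as $t \to 0^+$, so that it disappears after multiplying by $t$ and passing to the $\limsup$.

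Set $d_\star := \inf_{y \in C} d(x,y)$. If $d_\star = 0$ the inequality is trivial since $\mu_t[x](C) \leq 1$, so assume $d_\star > 0$ and fix $\varepsilon > 0$. Thanks to the properness of $X$, decompose $C$ into the Borel shells
\begin{equation*}
    A_k := C \cap \{y \in X \,:\, d_\star + k \leq d(x,y) < d_\star + k + 1\}, \qquad k \in \mathbb{N}.
\end{equation*}
Applying \eqref{Gaussian upper bound infinite} with the uniform lower bound $d(x,y) \geq d_\star + k$ on each $A_k$ yields
\begin{equation*}
    \mu_t[x](A_k) \leq \frac{e^{C_\varepsilon(1+C_K)t}}{\sqrt{\mu(B_{\sqrt{t}}(x))}}\, e^{-(d_\star + k)^2/(4+\varepsilon)t} \int_{A_k} \frac{d\mu(y)}{\sqrt{\mu(B_{\sqrt{t}}(y))}}.
\end{equation*}

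To bound the volume factor I would combine the Harnack inequality of Proposition \ref{Harnack inequality} with the standard volume estimates available on $\mathrm{RCD}(K,\infty)$ spaces: chaining Harnack along a geodesic from $x$ to $y$ gives a quantitative lower bound $\mu(B_{\sqrt{t}}(y)) \geq \mu(B_{\sqrt{t}}(x))\, e^{-\Phi_K(d(x,y),t)}$, and at-most-Gaussian volume growth $\mu(B_r(x)) \leq A_x e^{B_K r^2}$ controls the volume of the shells. For $t$ small (and $\varepsilon$ possibly slightly enlarged), the resulting exponents are dominated by the Gaussian weight $e^{-(d_\star + k)^2/(4+\varepsilon)t}$, and a geometric-type summation over $k$ gives
\begin{equation*}
    \mu_t[x](C) \leq G(t,x,K,\varepsilon)\, e^{-d_\star^2/(4+2\varepsilon)t},
\end{equation*}
with $\log G(t,x,K,\varepsilon) = o(1/t)$ as $t \to 0^+$.

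Multiplying by $t$, passing to $\limsup_{t\to 0^+}$, and then sending $\varepsilon \to 0^+$ concludes the proof. The main obstacle is the volume estimate inside the shell bound: in a general $\mathrm{RCD}(K,\infty)$ space without a dimensional upper bound, $\mu(B_r(\cdot))$ may itself grow almost Gaussianly in $r$, right on the borderline with the Gaussian decay $e^{-d^2/4t}$; one has to tune the Harnack chain length and the volume constants in order to absorb the near-diagonal error into $\varepsilon$ and certify that the contribution of $\log G(t,x,K,\varepsilon)$ to $t\log \mu_t[x](C)$ indeed vanishes in the limit $t \to 0^+$, rather than shifting the Varadhan rate $-d_\star^2/4$.
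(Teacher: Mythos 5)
The paper does not prove this theorem internally: it is imported verbatim from \cite[Theorem 5.3]{gigli2022viscosity}, whose argument goes through the connection between the heat flow, Hamilton--Jacobi equations, and the Hopf--Lax semigroup rather than through pointwise heat-kernel bounds. So there is no internal proof to compare against, but your sketch does contain a genuine gap, and you correctly point at it without closing it.

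The plan itself --- integrate the Gaussian upper bound \eqref{Gaussian upper bound infinite} over a shell decomposition of $C$ and show that everything but the Gaussian tail contributes $e^{o(1/t)}$ --- is the natural first try. The factor $e^{C_\varepsilon(1+C_K)t}$ is indeed harmless, and Sturm's at-most-Gaussian volume growth $\mu(B_r(x))\le A\,e^{Br^2}$ does tame $\mu(A_k)$ once $t$ is small, since $t\log\bigl(A e^{B(d_\star+k+1)^2}\bigr)\to 0$ at fixed $k$ and the Gaussian decay beats the shell growth for small $t$. The unresolved step is the uniform lower bound on $\mu(B_{\sqrt t}(y))$ for $y$ ranging over the near shell $A_0$. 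You propose to get it by chaining the Harnack inequality of Proposition \ref{Harnack inequality} along a geodesic from $x$ to $y$, landing on something of the shape $\mu(B_{\sqrt t}(y))\ge \mu(B_{\sqrt t}(x))\,e^{-\Phi_K(d(x,y),t)}$. But that Harnack inequality has exponent
\begin{equation*}
\frac{pK d^2(x,y)}{2(p-1)(e^{2Kt}-1)}\ \xrightarrow{\ t\to 0\ }\ \frac{p}{4(p-1)}\cdot\frac{d^2(x,y)}{t},
\end{equation*}
which is $\gtrsim d^2(x,y)/(4t)$ for every admissible $p$. Any $\Phi_K$ obtained this way is therefore of the \emph{same order} as the Gaussian decay rate $d^2(x,y)/(4+\varepsilon)t$; plugging it in does not give an $e^{o(1/t)}$ prefactor but shifts the Varadhan exponent. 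Concretely, $\mu(B_{\sqrt t}(y))^{-1/2}\lesssim \mu(B_{\sqrt t}(x))^{-1/2}e^{d^2(x,y)/(8t)}$ turns the Gaussian factor into $e^{-d^2(x,y)(\frac{1}{4+\varepsilon}-\frac18)/t}$, and after integration the best you obtain is $\limsup_{t\to 0}t\log\mu_t[x](C)\le -d_\star^2/8$, losing a factor of two. Tuning the Harnack exponent $p$ does not help: $p\to\infty$ saturates the constant $1/4$ (still critical), and $p\to 1^+$ makes the prefactor blow up.

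Underlying this is the structural issue you flag yourself: ${\rm RCD}(K,\infty)$ spaces carry no uniform volume doubling, so there is no off-diagonal comparison of $\mu(B_{\sqrt t}(x))$ and $\mu(B_{\sqrt t}(y))$ with a $t$-independent (or even subcritical $o(1/t)$) error. Properness guarantees that $\inf_{y\in\overline{A_0}}\mu(B_\rho(y))>0$ for each fixed $\rho>0$, but it gives no quantitative control as $\rho=\sqrt t\to 0$, which is exactly what you need. In short, the sketch correctly sets up the computation, correctly names the obstacle, but neither establishes the needed volume comparison nor shows that its error can be absorbed into $\varepsilon$; the proof of \cite[Theorem 5.3]{gigli2022viscosity} circumvents this precisely by avoiding a pointwise heat-kernel integration and arguing instead through the $\log$-Harnack/Hopf--Lax machinery.
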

\begin{remark}
    In \eqref{Large Deviation} we can choose $C=X\setminus B_r(p)$ and obtain the following estimate for small times (depending on $r>0$ and $\varepsilon>0$)
    \begin{equation}
    \label{expconv}
        |e^{t\Delta}(\chi_{X\setminus B_r(p)})(p)|\leq \exp\bigg(-\frac{r^2-\varepsilon}{4t}\bigg)
    \end{equation}
\end{remark}

We are finally ready to prove the following proposition (analog of Proposition \ref{limexistence})
\begin{proposition}
    Let $(X,d,\mu)$ be a proper ${\rm RCD}(K,\infty)$ space with $\mu(X)= + \infty$. Then for every $p\in X$
    \begin{equation*}
        \theta_M(p)=\lim_{s\to 0}\int_{X\setminus B_1(p)}\mathcal{K}_s(x,p)d\mu(x) = 1.
    \end{equation*}
\end{proposition}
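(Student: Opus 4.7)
The plan is to follow verbatim the proof of Proposition \ref{limexistence}, since the three ingredients exploited there are all available in the ${\rm RCD}$ setting: stochastic completeness of ${\rm RCD}(K,\infty)$ spaces (recorded just after the Gaussian upper bound), the small-time decay of heat solutions of characteristic functions of the complement of a ball provided by the upper Large Deviation estimate \eqref{expconv}, and the uniform long-time decay $H_X(p,\cdot,t)\to 0$ from \eqref{unifconvatinf}.

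Concretely, after swapping integration orders (all integrands are nonnegative) and using $1/|\Gamma(-s/2)|\sim s/2$, the quantity to analyze is
\begin{equation*}
\lim_{s\to 0^+}\frac{s}{2}\int_0^\infty e^{t\Delta}(\chi_{X\setminus B_1(p)})(p)\,\frac{dt}{t^{1+s/2}},
\end{equation*}
where the heat semigroup is extended to bounded Borel functions via $e^{t\Delta}f(p)=\int_X f(x)H_X(x,p,t)\,d\mu(x)$. First I would split the time integral at $t=1$ and establish two vanishing claims. The small-time piece
\begin{equation*}
\lim_{s\to 0^+}\frac{s}{2}\int_0^1 e^{t\Delta}(\chi_{X\setminus B_1(p)})(p)\,\frac{dt}{t^{1+s/2}}=0
\end{equation*}
follows directly from \eqref{expconv}: for $\ep\in(0,1)$ fixed and $t$ sufficiently small one has $e^{t\Delta}(\chi_{X\setminus B_1(p)})(p)\le\exp(-1/((4+\ep)t))$, so the $t$-integral is bounded uniformly in $s$ and the prefactor $s$ kills the limit. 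The long-time piece supported on $B_1(p)$,
\begin{equation*}
\lim_{s\to 0^+}\frac{s}{2}\int_1^\infty e^{t\Delta}(\chi_{B_1(p)})(p)\,\frac{dt}{t^{1+s/2}}=0,
\end{equation*}
is obtained by using that $\mu(B_1(p))<\infty$ together with \eqref{unifconvatinf} to select, for each $\eta>0$, a threshold $T=T(\eta)$ such that $e^{t\Delta}(\chi_{B_1(p)})(p)\le\eta$ for $t\ge T$; the contribution on $[1,T]$ is $O(1-T^{-s/2})=o(1)$ once multiplied by $s/2$, while the tail is controlled by $\eta$.

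Adding and subtracting these two negligible pieces (which is legitimate because $\chi_{X\setminus B_1(p)}+\chi_{B_1(p)}=\chi_X$) reduces the original limit to
\begin{equation*}
\lim_{s\to 0^+}\frac{s}{2}\int_1^\infty e^{t\Delta}(\chi_X)(p)\,\frac{dt}{t^{1+s/2}}=\lim_{s\to 0^+}\frac{s}{2}\int_1^\infty\frac{dt}{t^{1+s/2}}=1,
\end{equation*}
using stochastic completeness $e^{t\Delta}\chi_X(p)=\int_X H_X(x,p,t)\,d\mu(x)=1$ in the first step. The main obstacle is minor and essentially notational: one must ensure the extension of $e^{t\Delta}$ to bounded but not necessarily $L^2$ characteristic functions is well-defined and that the Large Deviation estimate applies at the chosen radius $r=1$. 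Both are guaranteed by the duality pairing with $e^{t\Delta}\delta_p\in\mathcal{P}_2(X)$ introduced after Proposition \ref{heat flow properties} and by the hypothesis that $(X,d,\mu)$ be proper, so the argument transfers from the smooth case essentially unchanged.
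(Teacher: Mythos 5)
Your proof follows essentially the same three-step decomposition as the paper's: a small-time vanishing claim, a long-time vanishing claim on the ball, and then stochastic completeness to conclude. The one place where you are a bit quick is the small-time piece on $[0,1]$: the upper Large Deviation estimate \eqref{expconv} only gives the exponential decay $e^{t\Delta}(\chi_{X\setminus B_1(p)})(p)\le\exp(-1/((4+\varepsilon)t))$ for $t$ below some threshold $\delta>0$ depending on $\varepsilon$ and the radius, not up to $t=1$; this is in contrast with the smooth-case Lemma \ref{convergence}, which gives the bound $Ce^{-c/t}$ for all $t\le 1$. Your assertion that ``the $t$-integral is bounded uniformly in $s$'' is correct, but it additionally needs the trivial bound $e^{t\Delta}(\chi_{X\setminus B_1(p)})(p)\le 1$ (item $(i)$ of Proposition \ref{heat flow properties}) on $[\delta,1]$, so that $\int_\delta^1 t^{-1-s/2}\,dt$ stays bounded as $s\to 0^+$. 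The paper handles this same middle range differently: it rewrites it via Fubini and stochastic completeness as $\frac{s}{2}\int_\delta^1 t^{-1-s/2}\,dt-\frac{s}{2}\int_{B_1(p)}\int_\delta^1 H_X(x,p,t)t^{-1-s/2}\,dt\,d\mu(x)$ and kills the second term using the Gaussian upper bound \eqref{Gaussian upper bound infinite} together with properness to get equiboundedness of $H_X$ on $B_1(p)\times[\delta,1]$. Your implicit route via the weak maximum principle is actually cleaner and avoids that detour, but either way the argument is sound; just make the use of the $[\delta,1]$ bound explicit.
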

\begin{proof}
    As for the smooth case, we first show that 
    \begin{equation*}
        \lim_{s\to 0^+}\frac{s}{2}\int_{X\setminus B_1(p)}\int_0^1 H_{X}(x,p,t)\frac{dt}{t^{1+s/2}} d\mu(x)=0.
    \end{equation*} 
    Indeed there exists $\delta>0$ such that for all $t\leq\delta$ \eqref{expconv} holds, so that the previous integral can be estimated with the following
    \begin{equation*}
        \frac{s}{2}\int_{0}^\delta e^{-r^2/5t}\frac{dt}{t^{1+s/2}}+ \frac{s}{2}\int_{X\setminus B_1(p)}\int_\delta^1H_X(x,p,t)\frac{dt}{t^{1+s/2}}.
    \end{equation*}
    The first term clearly goes to zero as $s\to 0^+$ and to handle the second we use Fubini to deduce that (here stochastical completeness is not necessary but ${\rm RCD}(K,\infty)$ spaces enjoy this property so we write the equality sign)
    \begin{equation*}
        \frac{s}{2}\int_{X\setminus B_1(p)}\int_\delta^1H_X(x,p,t)\frac{dt}{t^{1+s/2}}=\frac{s}{2}\int_\delta^1\frac{dt}{t^{1+s/2}}-\frac{s}{2}\int_{B_1(p)}\int_\delta^1H_X(x,p,t)\frac{dt}{t^{1+s/2}}d\mu(x).
    \end{equation*}
    Again the first term trivially goes to zero while for the second we apply \eqref{Gaussian upper bound infinite} and exploit properness of the space to infer that $H_X(\cdot,\cdot,\cdot)$ is equibounded in $B_1(p)\times[\delta,1]$ so that  
    \begin{equation*}
        \limsup_{s\to 0}\biggl|\frac{s}{2}\int_{B_1(p)}\int_\delta^1 H_X(x,p,t)\frac{dt}{t^{1+s/2}}d\mu(x)\biggr|\leq\limsup_{s\to 0^+} C\frac{s}{2}\int_\delta^1\frac{dt}{t^{1+s/2}}=0\,.
    \end{equation*}
    We now claim that 
    \begin{equation*}
        \lim_{s\to 0^+}\frac{s}{2}\int_{B_1(p)}\int_1^\infty H_X(x,p,t)\frac{dt}{t^{1+s/2}}d\mu(x)=0.
    \end{equation*}
    Indeed, thanks to the local uniform convergence proved in \eqref{unifconvatinf} and reasoning as in the previous step the latter result easily follows. 

    \vsp
    Finally, we can perform the same steps and write
    \begin{equation*}
         \theta_M(p)=\lim_{s\to 0}\frac{s}{2}\int_{X}\int_1^\infty H_X(x,y,t)\frac{dt}{t^{1+s/2}}d\mu(x),
    \end{equation*}
    which equals $1$ by using stochastical completeness.
\end{proof}
In the following proposition, we study the behavior of the singular kernel $\mathcal{K}_s(x,y)$.
\begin{proposition}\label{Singular kernel bounds RCD}
    Let $(X,d,\mu)$ be an ${\rm RCD}(K,N)$ space with $\mu(X)=+\infty$ and essential dimension equal to $n$. Then, for every $x\in X$ which is a regular point we have 
    \begin{equation}
       \frac{C s}{r^{n+s}}\leq \mathcal{K}_s(x,y)\leq \frac{C s}{r^{n+s}}+o_s(1)+\sup_{t\geq 1/s}H_M(x,y,t),
    \end{equation}
   for every $y\in X$, where $r=d(x,y)$. In particular $\mathcal{K}_s(x,\cdot)\to 0$ as $s\to 0^+$ locally uniformly away from $x$.
\end{proposition} 
\label{Singular kernel RCD}
\begin{proof}
    Let us define
    \begin{align*}
        \mathcal{K}_s(x,y) & =\frac{s}{2}\int_0^1H_X(x,y,t)\frac{dt}{t^{1+s/2}}+\frac{s}{2}\int_1^{1/s} H_X(x,y,t)\frac{dt}{t^{1+s/2}}+\frac{s}{2}\int_{1/s}^\infty H_X(x,y,t)\frac{dt}{t^{1+s/2}} \\ & =:I_1+I_2+I_3.
    \end{align*}
    By the Gaussian estimates \eqref{Heat kernel bounds} and using the fact that $x$ is a regular point we have
    \begin{equation*}
    I_1\leq C s \int_0^1 e^{-r^2/5t}\frac{dt}{t^{1+s/2+n/2}}\leq \frac{C s}{r^{n+s}}.
\end{equation*}
Moreover, since $\mu(X)=+\infty$ by \eqref{unifconvatinf} the heat kernel converges locally uniformly to zero, and we also get
\begin{equation*}
    I_2\leq Cs \int_1^{1/s}\frac{dt}{t^{1+s/2}}= C(1-s^{s/2}) = o_s(1),
\end{equation*}
for some constant $C$ which is bounded in a neighborhood of $x$.
Finally, we have
\begin{equation*}
    I_3\leq\frac{s}{2}\sup_{t\geq 1/s}H_M(x,y,t)\int_{1/s}^\infty\frac{dt}{t^{1+s/2}},
\end{equation*}
thus proving the upper bound in \eqref{Singular kernel bounds RCD}. For the lower bound it is enough to neglect $I_2$ and $I_3$ and apply the Gaussian estimate from below to $I_1$. 

\vsp
Finally, the local uniform convergence $\mathcal{K}_s(x,\cdot)\to 0$ is apparent due to the local uniform convergence \eqref{unifconvatinf} of the heat kernel to zero and the other quantities involved. 
\end{proof}

With the next proposition, we show that the heat density of a set, whenever it exists, is independent of the radius and also on the point if the $L^\infty-{\rm Liouville}$ property holds, analogously to the case of manifolds. 
\begin{proposition}
    Let $(X,d,\mu)$ be an ${\rm RCD}(K,\infty)$ space with $\mu(X)= + \infty$, let $E\subset M$ be measurable and set
    \begin{equation*}
        \Theta_{E,s}(p,r) :=\int_{X\setminus B_r(p)}  \mathcal{K}_s (p,x)d\mu(x).
    \end{equation*}
    Then for all $0<r\leq R$ one has
    \begin{equation*}
\limsup_{s\to 0^+} \big|  \Theta_{E,s}(p,R) -  \Theta_{E,s}(p,r) \big|=0.
    \end{equation*}
    meaning that if $\lim_{s\to 0^+}  \Theta_{E,s}(p,r)=\theta_E(p)$ exists for some $p\in M$, then it does not depend on $r$. Moreover, if the $L^\infty-{\rm Liouville}$ property holds on $X$ and $\theta_E(p)$ exists for some $p\in X$, then $\theta_E\equiv\theta_E(p)$ is constant.
\end{proposition}
\begin{proof}
    We first show the independence on the radius; therefore we fix any two $0<r<R$, and we show that 
    \begin{equation*}
        \limsup_{s\to 0^+}\frac{s}{2}\int_{\overline{B_{R}(p)}\setminus B_r(p)}\int_0^\infty H_X(x,p,t)\frac{dt}{t^{1+s/2}}d\mu(x)=0.
    \end{equation*}
    We split the integral over time in three pieces: one from $0$ to $\varepsilon$, one from $\varepsilon$ to $T$, and the last one from $T$ to $\infty$. The first piece goes to zero since $\overline{B_R(p)}\setminus B_r(p)$ is a closed set and we can apply \eqref{expconv}, the second piece goes to zero for every $T\gg 1$ thanks to the properness of the space, the Gaussian upper bound \eqref{Gaussian upper bound infinite} and easy calculations, while the last piece is such that, for all $T\geq T_0(\varepsilon)$
    \begin{equation*}
        \limsup_{s\to 0^+}\frac{s}{2}\int_T^\infty H_X(x,p,t)\frac{dt}{t^{1+s/2}}d\mu(x)\leq\varepsilon.
    \end{equation*}
    Since this holds for every $\varepsilon$ we get the convergence to zero.

\vsp 
    For what concerns the independence on the point, we first take $r$ big that $q\in B_{r/10}(p)$ and wlog, we assume $E$ to be closed. We have
    \begin{align*}
        \limsup_{s\to 0^+}\biggl|\int_{E\setminus B_r(p)}  \mathcal{K}_s(x,p) & d\mu(x)-\int_{E\setminus B_{2r}(q)} \mathcal{K}_s(x,q)d\mu(x)\biggr| \\ & \leq \limsup_{s\to 0^+} \biggl|\int_{E\setminus B_r (p)}\mathcal{K}_s(x,q)d\mu(x)-\int_{E\setminus B_{2r} (q)}\mathcal{K}_s(x,q)d\mu(x)\biggr| \\ &
       + \limsup_{s\to 0^+}\biggl|\int_{E\setminus B_r(p)}\mathcal{K}_s(x,p)-\mathcal{K}_s(x,q)d\mu(x)\biggr|=: I_1+I_2.
    \end{align*}
    The first integral is zero since
    \begin{equation*}
        I_1 \le \limsup_{s\to 0^+} \int_{B_{2r}(q) \setminus B_{r}(p)} \mathcal{K}_s(x,q)d\mu(x) \le \theta_{B_{2r}(q)}(q) =0 \,,
    \end{equation*}
    where we have used the independence on the radius. 
    
    While for $I_2$ we shall exploit the $L^\infty-{\rm Liouville}$ property of $X$. We can, as usual, expand the singular kernel and split the integral in time into three pieces in time, one going from $0$ to $1$, another from $1$ to $T\gg 1$, and lastly, from $T$ to $\infty$. The first two are handled thanks to the exponential convergence \eqref{expconv} and the boundedness of the heat kernel, while for the last one, we have
    \begin{equation*}
        \limsup_{s\to 0^+}\bigg|\int_T^\infty e^{t\Delta}(\chi_{E\setminus B_r(p)})(p)-e^{t\Delta}(\chi_{E\setminus B_r(p)})(q)\frac{dt}{t^{1+s/2}}\bigg|=0 \,,
    \end{equation*}
    thanks to the $L^\infty-{\rm Liouville}$ property. 
    
    Indeed $e^{t\Delta}(\chi_{E\setminus B_r(p)})$ converges up to subsequences to a constant harmonic function; hence its (of the limit function) value at the points $p$ and $q$ is the same so that, being this true for any subsequence, $e^{t\Delta}(\chi_{E\setminus B_r(p)})(p)-e^{t\Delta}(\chi_{E\setminus B_r(p)})(q)\to 0$ as $t\to\infty$.
\end{proof}
\begin{remark}
    In the previous proposition, we only care about spaces satisfying the $L^\infty-{\rm Liouville}$ property. However, with a little work, it is possible to show that the function $p\mapsto\theta_E(p)$, whenever it exists, is a bounded harmonic function in a suitable weak sense. 
\end{remark}
Finally, we have the analog of Theorem \ref{globlimnoncompact}.
\begin{theorem}\label{rcdlimit22}
    Let $(X,d,\mu)$ be a proper ${\rm RCD}(K,N)$ space with $\mu(X)=+\infty$, $N<+\infty$, essential dimension equal to $n$ and let $s_\circ\in (0,1)$. Then for every $u \in H^{s_\circ/2}(X)\cap L^\infty(X)$ with bounded support there holds
    \begin{equation*}
         \lim_{s \to 0^+} \frac{1}{2} [u]_{H^{s/2}(X)}^2 =  \|u \|_{L^2(X)}^2 .
     \end{equation*}
\end{theorem}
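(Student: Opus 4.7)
The natural strategy is to follow the spectral-theoretic proof used for RCD finite volume earlier in this section, combining it with the generalized Yau theorem (Proposition \ref{Generalized Yau}) in place of its classical Riemannian counterpart. The advantage over the direct imitation of the smooth proof of Theorem \ref{globlimnoncompact} is that we avoid the delicate local comparison of $\mathcal{K}_s$ with the Euclidean kernel (Lemma \ref{porcatroia}), which has no clean metric-measure analogue; the local behavior of $\mathcal{K}_s$ in RCD spaces could in principle be extracted from the Gaussian bounds \eqref{Heat kernel bounds} and Bishop-Gromov, but the spectral route is considerably shorter.

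The main calculation proceeds as follows. Polarize to write $\frac{1}{2}[u]^2_{H^{s/2}(X)} = \langle u,(-\Delta)^{s/2}u\rangle_{L^2(\mu)}$. By stochastic completeness of RCD$(K,\infty)$ spaces (and hence RCD$(K,N)$ with $N<\infty$), the singular integral Laplacian $(-\Delta)^{s/2}_{\rm Si}$ coincides with the subordinated Bochner Laplacian
\[ (-\Delta)^{s/2} u = \frac{1}{\Gamma(-s/2)}\int_0^\infty \bigl(e^{t\Delta}u - u\bigr) \frac{dt}{t^{1+s/2}}. \]
Apply the spectral theorem for the nonnegative self-adjoint operator $-\Delta$ on $L^2(X,\mu)$: writing $-\Delta = \int_0^\infty \lambda \, dE_\lambda$, the identity $\int_0^\infty (e^{-t\lambda}-1) t^{-1-s/2}\,dt = \Gamma(-s/2)\lambda^{s/2}$ for $\lambda>0$ and Fubini give
\[ \frac{1}{2}[u]^2_{H^{s/2}(X)} = \int_{(0,\infty)} \lambda^{s/2}\,d\langle E_\lambda u,u\rangle. \]

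To pass to the limit $s \to 0^+$, use the bound $\lambda^{s/2}\leq 1+\lambda^{s_\circ/2}$ valid for $s<s_\circ$, which is $\langle E_\lambda u,u\rangle$-integrable because $u\in H^{s_\circ/2}(X)$. Dominated convergence yields
\[ \lim_{s\to 0^+}\frac{1}{2}[u]^2_{H^{s/2}(X)} = \int_{(0,\infty)} d\langle E_\lambda u,u\rangle = \|u\|_{L^2}^2 - \|P_0u\|_{L^2}^2, \]
where $P_0$ is the orthogonal projection onto $\ker(-\Delta)$ in $L^2(X,\mu)$. By Proposition \ref{Generalized Yau} applied to the RCD$(K,\infty)$ space $X$ with $\mu(X)=\infty$, the only $L^2$ harmonic function is $0$, so $P_0u=0$ and the limit equals $\|u\|_{L^2(X)}^2$.

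The main subtle point I expect lies in the Bochner/singular-integral identification and the attendant use of Fubini's theorem; both rely crucially on stochastic completeness and on the pointwise existence and Gaussian bounds \eqref{Heat kernel bounds} of the heat kernel, which is why the RCD$(K,N)$ hypothesis with $N<\infty$ and properness enter. For $u$ Lipschitz of bounded support these estimates make the double integral absolutely convergent and justify all exchanges of integration; the extension to a general $u\in H^{s_\circ/2}(X)$ of bounded support follows by density, exactly as in the last part of the proof of Theorem \ref{globlimnoncompact}, using the interpolation inequality controlling $[u]_{H^{s/2}}$ in terms of $\|u\|_{H^{s_\circ/2}}$.
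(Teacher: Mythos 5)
Your proposal is correct, but it takes a genuinely different route from the paper. The paper's proof of this RCD theorem mirrors the smooth Theorem \ref{globlimnoncompact}: it fixes a regular point $x$, shows pointwise convergence $(-\Delta)^{s/2}u(x) \to u(x)$ for $u \in {\rm Lip}_b(X)$ with bounded support by splitting the singular integral into a near-diagonal part (controlled via the Gaussian bounds \eqref{Heat kernel bounds}, the coarea formula, the Large Deviation Principle, and the density of the Euclidean model at regular points — this is where properness and $N<\infty$ enter) and a far-field part (controlled by $\theta(E)=0$ for $E$ bounded), and then extends by density and interpolation. Your approach bypasses all of this local analysis by reducing to the spectral representation $\tfrac{1}{2}[u]^2_{H^{s/2}} = \int_{(0,\infty)}\lambda^{s/2}d\langle E_\lambda u,u\rangle$, dominated convergence, and the generalized Yau theorem (Proposition \ref{Generalized Yau}): in infinite volume the only $L^2$ harmonic function is zero, so the projection $E_{\{0\}}$ annihilates $u$. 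This is in effect the infinite-volume version of the spectral argument the paper already uses for $\mu(X)<\infty$ (and for Theorem \ref{globlimcomp} in the smooth setting), and it is strictly simpler: it does not use regular points, Bishop--Gromov, properness, $N<\infty$, or even bounded support of $u$. The only implicit assumption you are making — that the singular-integral/Bochner/spectral equivalences (Propositions \ref{coincideprop2}--\ref{coincideprop1} and Corollary \ref{corolequivalence}) transfer to ${\rm RCD}(K,\infty)$ spaces, which follows since their proofs use only stochastic completeness, heat kernel symmetry, Jensen, and abstract spectral calculus — is the same one the paper itself makes when it states that the finite-volume RCD theorem has "exactly the same proof as in the smooth case". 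Your last paragraph slightly overstates what is delicate: absolute convergence and Fubini in Proposition \ref{coincideprop2} are handled for general $u\in H^{s_\circ/2}$ without bounded support, so the density step you allude to is not actually needed once the spectral identity is in hand.
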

 \begin{proof}
     The proof is similar to the smooth case; we just need to handle the computations more carefully. We advise the reader to first see the proof in the smooth case of Theorem \ref{globlimnoncompact}. 

\vsp
     By Proposition \ref{coincideprop2} (which also holds for ${\rm RCD}$ spaces, see Remark \ref{rcdrmk}) for $\mu$-a.e. $x\in X$ the integral in $(-\Delta)^{s/2}_{\rm Si} u$ is absolutely convergent. Fix $x\in X$ in this full-measure set and $R>0$ such that $ \supp(u) \subseteq B_R(x)$. Now we prove that, as $s\to 0^+$, $(-\Delta)^{s/2}_{\rm Si} u\to u$  $\mu$-a.e. with the same strategy of the smooth case. Take also $x\in X$ to be a regular point, we have 
     \begin{equation*}
         (-\Delta)^{s/2}_{\rm Si} u(x) = \int_{B_R(x)}(u(x)-u(y))\mathcal{K}_s(x,y)d\mu(y)+u(x)\int_{X\setminus B_R(x)}\mathcal{K}_s(x,y)d\mu(y)
     \end{equation*}
     and we are left to prove that the first term goes to zero as $s\to 0^+$, as the second one in the limit is precisely $u(x)$. Now fix $\rho \ll 1 $ and let us split the first integral as follows
     \begin{align*}
   \bigg| \int_{B_R(x)}  (u(x)-u(y)) \mathcal{K}_s(x,y) \, d\mu(y) \bigg|   &=   \int_{ B_{\rho}(x)} |u(x)-u(y)|\mathcal{K}_s(x,y) \, d\mu(y) \\ & \s\s + \int_{B_R(x) \setminus B_{\rho}(x) } |u(x)-u(y)|\mathcal{K}_s(x,y) \, d\mu(y).
   \end{align*}
     For the first integral, we can  apply Proposition \ref{Singular kernel bounds RCD} to obtain
     
     \begin{equation}
     \label{Going to zero stuff}
           \int_{ B_{\rho}(x)} |u(x)-u(y)|\mathcal{K}_s(x,y) \, d\mu(y)\leq C s \int_{B_\rho(x)}\frac{|u(x)-u(y)|}{d(x,y)^{n+s}}d\mu(y)+o_s(1).
     \end{equation}
   Applying H\"older inequality as in the smooth case (take $s$ small so that $2s<s_0$) we now get
    \begin{equation*}
     \int_{ B_{\rho}(x)} \frac{|u(x)-u(y)|}{d(x,y)^{n+s}} \, d\mu(y)\leq  \left( \int_{B_\rho(x)} \frac{(u(x)-u(y))^2}{d(x,y)^{n+s_\circ} } \, d\mu(y) \right)^{1/2} \left( \int_{B_\rho(x)} \frac{1}{d(x,y)^{n+2s-s_\circ} } \,d\mu(y)  \right)^{1/2}
    \end{equation*}
    and conclude in the same way that taking the limit as $s\to 0^+$ in \eqref{Going to zero stuff} gives zero. For the second term, we just use the fact that $\mathcal{K}_s(x,\cdot)$ goes to zero locally uniformly away from $x$ together with dominated convergence. Therefore we have proved that $(-\Delta)^{s/2}_{\rm Si} u\to u$ $\mu$-a.e. as $s\to 0^+$. To establish the seminorms' convergence, we exploit Corollary \ref{corolequivalence}, which also holds in this non-smooth setting with the same proof. To conclude we just need to prove that $(-\Delta)^{s/2}_{\rm Si} u \rightharpoonup u$ weakly in $L^2(\mu)$: this is however apparent because of the equiboundedness of $\|(-\Delta)^{s/2}_{\rm Si} u\|_{L^2(\mu)}$ given by the estimate \eqref{uqweriufgweur}.
 \end{proof}
Thanks to the previous results we would be in the position of stating and proving (which we won't do since the proofs are exactly the same as in the smooth case) the theorems regarding the asymptotics of the fractional perimeter Theorem \ref{maincompact} and Theorem \ref{mainnoncompact}, also in this non-smooth setting.  
\section{Appendix}

\subsection{On the existence/nonexistence of $\theta_E(\cdot)$ at different points.}\label{sbs: theta existence at diff points}

Let $(M,g)$ be a complete Riemannian manifold with infinite volume and $E\subset M$. As we have proved in Lemma \ref{lem: ex theta everywhere} if $M$ has the $L^\infty-{\rm Liouville}$ property and $\theta_E(p)$ exists for some $p\in M$ then it exists for all $p\in M$ and the two values coincide. Let us stress that, on manifolds with $L^\infty-{\rm Liouville}$ property, the limit does not need to exist, but if it does not exist at some point, then it does not exist everywhere. For example, even on $\R^n$ in \cite[Example 2.8]{dpfv}, the authors exhibit a set for which the limit $\theta_E(x)$ does not exist at every point $x\in \R^n$. 

\vsp
On the other hand, on a general $M$ without the $L^\infty-{\rm Liouville}$ property, we believe that $\theta_E(\cdot)$ could exist for some $p\in M$ and fail to exist for some $q\neq p$. Let $\Theta_{E,s}(\cdot)$ defined as in \eqref{eq: big theta def} so that $\theta_E(p) = \lim_{s\to 0^+} \Theta_{E,s}$, if the limit exists. It can be proved with the Li-Yau Harnack inequality \cite[Corollary 12.3]{Liga} that if the limit in $\theta_E(p)$ does not exist and 
\begin{equation*}
    \limsup_{s\to 0^+} \Theta_{E,s}(p) - \liminf_{s\to 0^+} \Theta_{E,s}(p) = \delta>0 \,,
\end{equation*}
then the limit still does not exist for every $q\in B_{C\delta}(p)$, where $C>0$ is a constant that depends on $M$. But in this estimate the lower bound for $\limsup_{s\to 0^+} \Theta_{E,s}(q) - \liminf_{s\to 0^+} \Theta_{E,s}(q)$ tends to $0$ as $q$ approaches $ \partial B_{C\delta}(p)$. Without further information in $M$, we do not see any reason why the limit should not exist at some point outside $B_{C\delta}(p)$. 

\begin{comment}
\begin{lemma}
    Let $(X,d,\mu)$ be an ${\rm RCD}(K,\infty)$ space and $E\subset X$ be a Borel set. Assume that $p\mapsto\theta_E(p)$ exists (meaning that the limit in \eqref{eq: theta limit def} exists) for $\mu$-a.e. $p\in X$, then $\theta_E$ is an harmonic function.
\end{lemma}

\begin{proof}
    First of all it is easy to see that $0\leq\theta_E(p)\leq 1$, whence $\theta_E\in L^\infty(\mu)$. Secondly we see that for all $\varphi\in {\rm \widetilde{Test}}(X)$ we have 
    \begin{equation*}
        \int_X\theta_E\Delta\varphi \, d\mu = 0 \,,
    \end{equation*}
    where ${\rm \widetilde{Test}}(X)$ is defined as in \cite[page 17]{Gig23}.
    Now we claim that $e^{t\Delta}(\theta_E) =\theta_E$. Indeed, we observe that here point (i) of \cite[Lemma 4.2]{Gig23} applies with $g=0$. Therefore using the latter both for $\theta_E$ and for $-\theta_E$ allows to conclude that $e^{t\Delta}\theta_E=\theta_E$. In view of this we get that $\theta_E$ is Lipschitz continuous thanks to \eqref{Lip regularization} and therefore
    \begin{equation}
    \label{harmonic in the sense of distributions}
        \int_{X}\nabla\theta_E\cdot\nabla\varphi \,  d\mu=0 \,,
    \end{equation}
    for all $\varphi\in{\rm \widetilde{Test}}(X)$. Finally we can exploit the density of ${\rm \widetilde{Test}}(X)$ in ${\rm Lip}_{bs}(X)$ to conclude that \eqref{harmonic in the sense of distributions} holds for any $\varphi\in {\rm Lip}_{bs}(X)$. This proves that $\theta_E$ is harmonic and concludes the proof.
\end{proof}
\end{comment}
\subsection{Heat kernel estimates and \texorpdfstring{$H^s(M)$}{Hs} spaces.}\label{hker est sbs}

Here $(M,g)$ denotes a complete, connected Riemannian manifold. First, we present a simple interpolation inequality for $H^{s/2}(M)$ spaces.

\vsp
This inequality is known in the case of $M=\R^n$ or $M=\Omega \subset \R^n$ for fractional Sobolev spaces $W^{s,p}$, also when $p\neq 2$. Here, we carry on a structural proof using a few properties of the heat kernel, which gives the interpolation inequality on general ambient spaces.   
\begin{lemma}\label{intineq}
    Let $u\in H^{\sigma}(M)$ for some $\sigma\in(0,1)$, and let $0<s<\sigma<1$. Then $u\in H^s(M)$ and the following inequality holds
    \begin{equation*}
        [u]_{H^{s}(M)}\leq C\|u\|_{L^2(M)}^{1-s/\sigma  }[u]_{H^{\sigma}(M)}^{s/\sigma}.
    \end{equation*}
    for some absolute constant $C>0$.
    \begin{proof}
        We have 
        \begin{align*}
             |\Gamma(-s)|[u]_{H^{s}(M)}^2 &=  \iint_{M\times M} (u(x)-u(y))^2\int_0^\infty H_M(x,y,t)\frac{dt}{t^{1+s}}d\mu(x)d\mu(y)  \\
             &\leq \iint_{M\times M} (u(x)-u(y))^2\int_0^\xi H_M(x,y,t)\frac{dt}{t^{1+s}}d\mu(x)d\mu(y) \\
             &+\iint_{M\times M} (u(x)-u(y))^2\int_\xi^\infty H_M(x,y,t)\frac{dt}{t^{1+s}}d\mu(x)d\mu(y) 
        \end{align*}
        where $\xi\in (0,\infty)$ will be chosen at the end. Note that for all $t\in(0,\xi)$ we have $(\xi/t)^{1+s}\leq(\xi/t)^{1+\sigma}$ so that we can estimate from above the first integral of the previous inequality with
        \begin{equation*}
             \xi^{\sigma-s}\iint_{M\times M} (u(x)-u(y))^2\int_0^\xi H_M(x,y,t)\frac{dt}{t^{1+\sigma}}d\mu(x)d\mu(y)\leq  \xi^{\sigma-s} |\Gamma(-\sigma)|[u]^2_{H^{\sigma}(M)}.
        \end{equation*}
        The symmetry of the heat kernel and the fact that $\mathcal{M}(t,y)\leq 1$, for all $y\in M$, together imply that the second integral can be bounded by
        \begin{equation*}
           \iint_{M\times M} (u(x)-u(y))^2\int_\xi^\infty H_M(x,y,t)\frac{dt}{t^{1+s}}d\mu(x)d\mu(y) \le \frac{4}{s\xi^{s}}\|u\|^2_{L^2(M)}.
        \end{equation*}
        These two inequalities lead to
        \begin{equation*}
            |\Gamma(-s)|[u]_{H^{s}(M)}^2\leq  \xi^{\sigma-s} |\Gamma(-\sigma)| [u]^2_{H^{\sigma}(M)} + \frac{4}{s\xi^{s}}\|u\|^2_{L^2(M)}.
        \end{equation*}
    Optimizing the right-hand side in $\xi$ gives that the optimal value is
    \begin{equation*}
        \xi=\biggl(\frac{4\|u\|_{L^2(M)}^2}{(\sigma-s)|\Gamma(-\sigma)|[u]^2_{H^{\sigma}(M)}}\biggr)^{1/\sigma}.
    \end{equation*}
    Putting everything together gives
    \begin{equation*}
        |\Gamma(-s)|[u]^2_{H^{s}(M)}\leq \frac{C}{s} \|u\|_{L^2(M)}^{2(1-s/\sigma)}[u]_{H^{\sigma}(M)}^{2s/\sigma} \,, 
    \end{equation*}
   and this implies
   \begin{equation*}
        [u]_{H^{s}(M)}\leq C \|u\|_{L^2(M)}^{1-s/\sigma}[u]_{H^{\sigma}(M)}^{s/\sigma} \,, 
    \end{equation*}
    as desired.
    \end{proof}
\end{lemma}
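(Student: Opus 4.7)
My plan is to use the heat-kernel representation of $\mathcal{K}_{2s}$ directly and split the time integral at a threshold $\xi>0$ to be optimized. Concretely, I would write
\begin{equation*}
|\Gamma(-s)|\,[u]^2_{H^s(M)} = \iint_{M\times M}(u(x)-u(y))^2\left(\int_0^\xi + \int_\xi^\infty\right)H_M(x,y,t)\frac{dt}{t^{1+s}}\,d\mu(x)d\mu(y),
\end{equation*}
so that the short-time piece can be compared to the $H^\sigma$-seminorm and the long-time piece to the $L^2$-norm.

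For the short-time piece $t\in(0,\xi)$, I would use the elementary inequality $t^{\sigma-s}\leq \xi^{\sigma-s}$ (which holds precisely because $s<\sigma$ and $t\leq\xi$) to bound
\begin{equation*}
\int_0^\xi H_M(x,y,t)\frac{dt}{t^{1+s}} \leq \xi^{\sigma-s}\int_0^\xi H_M(x,y,t)\frac{dt}{t^{1+\sigma}} \leq \xi^{\sigma-s}\,|\Gamma(-\sigma)|\,\mathcal{K}_{2\sigma}(x,y),
\end{equation*}
which after integration produces a term $\xi^{\sigma-s}\,|\Gamma(-\sigma)|\,[u]^2_{H^\sigma(M)}$.

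For the long-time piece $t\in(\xi,\infty)$, I would first use $(u(x)-u(y))^2\leq 2u(x)^2+2u(y)^2$, then the symmetry $H_M(x,y,t)=H_M(y,x,t)$ to write both cross-terms as integrals in the variable not equal to the squared one, and finally apply the fundamental bound $\int_M H_M(x,y,t)\,d\mu(y)\leq 1$ (which is a standard consequence of Lemma \ref{Mass decay} and holds on every complete manifold regardless of stochastic completeness). The time integral then reduces to $\int_\xi^\infty t^{-1-s}\,dt = s^{-1}\xi^{-s}$, yielding a bound of the form $C s^{-1}\xi^{-s}\|u\|_{L^2(M)}^2$.

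Combining the two estimates gives
\begin{equation*}
|\Gamma(-s)|\,[u]^2_{H^s(M)} \leq \xi^{\sigma-s}|\Gamma(-\sigma)|[u]^2_{H^\sigma(M)} + \tfrac{C}{s}\,\xi^{-s}\,\|u\|^2_{L^2(M)}.
\end{equation*}
The final step is a routine optimization in $\xi$: the minimum of $A\xi^{\sigma-s}+B\xi^{-s}$ is attained at $\xi^\sigma = \tfrac{s}{\sigma-s}\cdot\tfrac{B}{A}$, and substituting back produces the bound $[u]^2_{H^s(M)}\leq C\,\|u\|^{2(1-s/\sigma)}_{L^2(M)}\,[u]^{2s/\sigma}_{H^\sigma(M)}$ after absorbing the factors $|\Gamma(-s)|^{-1}$, $|\Gamma(-\sigma)|^{s/\sigma}$ and $(s(\sigma-s))^{-1}$ into an absolute constant. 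I do not expect any serious obstacle here: the main care is just to check that no step of this scheme uses anything beyond symmetry of $H_M$ and the universal bound $\int_M H_M(\cdot,y,t)\,d\mu\leq 1$, so that the inequality holds on any complete Riemannian manifold (and in fact on weighted manifolds and ${\rm RCD}$ spaces alike).
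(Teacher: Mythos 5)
Your proposal is correct and follows essentially the same approach as the paper: the same heat-kernel representation of the seminorm, the same split of the time integral at a threshold $\xi$, the same elementary comparison $t^{-1-s}\leq \xi^{\sigma-s}t^{-1-\sigma}$ on $(0,\xi)$, the same use of symmetry and the universal mass bound $\int_M H_M(\cdot,y,t)\,d\mu\leq 1$ for the long-time piece, and the same optimization in $\xi$. The only difference is cosmetic: you spell out the step $(u(x)-u(y))^2\leq 2u(x)^2+2u(y)^2$, which the paper leaves implicit.
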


\begin{lemma}[\hspace{1sp}\cite{CFS23}]\label{convergence} Let $(M^n,g)$ be a complete $n$-dimensional Riemannian manifold and let $ B_R(p) \subset M $. Then 
\begin{equation*}
   e^{t\Delta}(\chi_{M\setminus B_R(p)})(p) = \int_{M\setminus B_R(p)} H_M(x,p,t) \, d\mu(x) \le C e^{-c/t} \,,
\end{equation*}
for some $C,c>0$ depending on $R$ and the geometry of $M$ in $B_R(p)$. 
\end{lemma}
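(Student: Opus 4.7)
The strategy is to separate the regime of large and small $t$. For $t\geq 1$, Lemma~\ref{Mass decay} gives
\begin{equation*}
\int_{M\setminus B_R(p)} H_M(x,p,t)\,d\mu(x) \le \int_M H_M(x,p,t)\,d\mu(x) \le 1 \le e\cdot e^{-1/t},
\end{equation*}
so the inequality holds with $C=e$, $c=1$, and we may restrict attention to $t\in(0,1]$.

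For $t\in(0,1]$, set $u(x,t):=e^{t\Delta}(\chi_{M\setminus B_R(p)})(x) = \int_{M\setminus B_R(p)} H_M(x,y,t)\,d\mu(y)$. By the weak maximum principle, $0\leq u\leq 1$, and $u(\cdot,0)\equiv 0$ on $B_R(p)$. On the parabolic cylinder $B_R(p)\times(0,1]$, $u$ is a solution of the heat equation with zero initial datum and boundary values bounded by $1$; the parabolic maximum principle therefore gives $u\leq w$ on $B_R(p)\times(0,1]$, where $w$ solves the Dirichlet problem
\begin{equation*}
\partial_t w = \Delta_g w\ \text{in}\ B_R(p)\times(0,1], \quad w|_{\partial B_R(p)\times(0,1]} = 1, \quad w(\cdot,0) = 0 \text{ in } B_R(p).
\end{equation*}
Since the quantity to estimate is precisely $u(p,t)$, the lemma reduces to showing $w(p,t)\le C e^{-c/t}$ on $(0,1]$.

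To bound $w(p,t)$ I would pass to local coordinates in which $\Delta_g$ becomes a uniformly elliptic operator. If $R$ is smaller than the injectivity radius at $p$, the exponential map identifies $B_R(p)$ with the Euclidean ball $\B_R(0)\subset\R^n$, and the metric coefficients $(g_{ij})$ are smooth, bounded, and uniformly positive definite there. In these coordinates $1-w$ solves a uniformly parabolic Dirichlet problem in divergence form on $\B_R(0)$, with initial datum $1$ and zero boundary condition, and so can be represented as an integral of the corresponding Dirichlet heat kernel. A standard barrier argument (or, equivalently, Aronson's Gaussian upper bound for the fundamental solution of a uniformly elliptic operator) then yields
\begin{equation*}
w(0,t) \le C\,e^{-cR^2/t},\qquad t\in(0,1],
\end{equation*}
with $C,c>0$ depending only on the ellipticity bounds of $(g_{ij})$ over $\B_R(0)$. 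For general $R$, one covers $B_R(p)$ by finitely many geodesic balls contained in coordinate charts and iterates the local estimate (for instance via the strong Markov property, or via a telescoping barrier argument), producing constants that depend on the local geometry of $M$ on $B_R(p)$, as required.

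The main obstacle is this last step: propagating the Gaussian-in-$1/t$ decay from a small coordinate patch to the full ball $B_R(p)$ without invoking any global curvature assumption. Only the geometry of $M$ inside $B_R(p)$ is available, so the constants must inevitably depend on this local structure, which is exactly what the lemma's statement records.
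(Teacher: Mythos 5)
Your proof works, and the core idea (reduce to the Dirichlet exit probability from a small coordinate ball, where the operator is uniformly elliptic) is exactly what the cited \cite[Lemma 2.9]{CFS23} does — the paper's proof of this lemma consists of quoting that reference and then a one-line monotonicity observation. But you have misidentified where the difficulty lies. You flag the passage from a small coordinate ball $B_r(p)$ to the given ball $B_R(p)$ as ``the main obstacle,'' and propose a Markov/iteration scheme for it; in fact this step is trivial and needs no iteration. Since $B_r(p)\subset B_R(p)$ for $r\le R$, we have $M\setminus B_R(p)\subset M\setminus B_r(p)$, so
\begin{equation*}
  \int_{M\setminus B_R(p)} H_M(x,p,t)\,d\mu(x)\ \le\ \int_{M\setminus B_r(p)} H_M(x,p,t)\,d\mu(x)\ \le\ C\,e^{-c r^2/t},
\end{equation*}
and since the lemma only claims a bound $C e^{-c/t}$ with constants allowed to depend on $R$ and the local geometry, absorbing the $r^2$ into $c$ is harmless. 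So the whole proof is: prove the Gaussian-in-$1/t$ bound for a single small ball $B_r(p)$ inside a normal-coordinate chart (your barrier/Aronson argument, or equivalently \cite[Lemma 2.9]{CFS23}), then invoke the inclusion above. Your chaining argument would be the right tool if one wanted the \emph{sharp} exponent $cR^2/t$, but the statement does not ask for that, and going through it would introduce genuine technical debt (one needs to control how the constants accumulate under iteration). As written, your proposal is correct once you replace the last paragraph by the monotonicity observation.
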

\begin{proof}
    This is essentially \cite[Lemma 2.9]{CFS23}. Indeed, in \cite[Lemma 2.9]{CFS23} the authors prove that if $(M,g)$ is a complete Riemannian manifold and $B_r(p) \subset M$ is a ball diffeomorphic to $\B_r(0) \subset T_pM $ with metric coefficients $g_{ij}$ (say, in normal coordinates) uniformly close to $\delta_{ij}$, then 
    \begin{equation*}
        \int_{M\setminus B_r(p)} H_M(x,p,t) \, d\mu(x) \le C e^{-c\,r^2/t} \,,
    \end{equation*}
    for some $C,c>0$ dimensional. Then, taking $r \ll 1$ very small and writing
    \begin{equation*}
        \int_{M\setminus B_R(p)} H_M(x,p,t) \, d\mu(x)  \le \int_{M\setminus B_r(p)} H_M(x,p,t) \, d\mu(x) 
    \end{equation*}
    allows to bound the desired integral.
\end{proof}

Now we present the proof of Lemma \ref{porcatroia}, that we needed to prove the asymptotics of the full $H^{s/2}(M)$ seminorm of Theorem \ref{globlimnoncompact}.

\begin{proof}[Proof of Lemma \ref{porcatroia}]
  Let $\varphi^{-1}:B_1(p) \to \R^n $ be the inverse of the exponential map at $p$. Take $\eta \in C^\infty_c(\B_{4/5}(0))$ with $\chi_{\B_{2/5}(0)} \le \eta \le \chi_{\B_{4/5}(0)}$ and let $g'_{ij}:=g_{ij} \eta +(1-\eta)\delta_{ij}$. This is a metric on $\R^n$ with $g'_{ij}=g_{ij}$ in $\B_{2/5}(0)$. Denote by $\mathcal{K}_s, \mathcal{K}_s'$ the singular kernels of $(M,g)$ and $M':=(\R^n, g')$ respectively. Let $\Lambda := \sup_{x \in B_{1/5}(p)} H_M(x,x,1)$ and $\Lambda' := \sup_{x \in \B_{1/5}(0)} H_{M'}(x,x,1)$. Then, by \cite[Lemma 2.17]{CFS23} applied to the Riemannian manifolds $(M,g)$ and $(\R^n, g')$ we have, for $x,y \in \B_{1/5}(0)$
\begin{align*}
    \big| \mathcal{K}_s(\varphi(x), \varphi(y)) & -\mathcal{K}_s'(x,y) \big|  \le \frac{s/2}{\Gamma(1-s/2)} \int_{0}^\infty \big| H_M(\varphi(x), \varphi(y),t)-H_{M'}(x,y,t) \big| \frac{dt}{t^{1+s/2}} \\ & \le Cs(2-s) \int_{0}^1 \big| H_M(\varphi(x), \varphi(y),t)-H_{M'}(x,y,t) \big| \frac{dt}{t^{1+s/2}} \\ & + Cs(2-s) \int_{1}^{1/s} \big| H_M(\varphi(x), \varphi(y),t)-H_{M'}(x,y,t) \big| \frac{dt}{t^{1+s/2}} \\ & + Cs(2-s) \int_{1/s}^\infty \big| H_M(\varphi(x), \varphi(y),t)-H_{M'}(x,y,t) \big| \frac{dt}{t^{1+s/2}} \\ & := Cs(2-s)\big[I_1+I_2+I_3 \big] \,.
\end{align*}

By \cite[Lemma 2.17]{CFS23} there holds
\begin{equation*}
    I_1 = \int_{0}^1 \big| H_M(\varphi(x), \varphi(y),t)-H_{M'}(x,y,t) \big| \frac{dt}{t^{1+s/2}} \le C \int_{0}^1 e^{-c/t}  \frac{dt}{t^{1+s/2}} \le C \, ,
\end{equation*}
for some dimensional $C=C(n)>0$. Regarding the second integral
\begin{equation*}
    I_2 \le \int_{1}^{1/s} (\Lambda+\Lambda') \frac{dt}{t^{1+s/2}} = (\Lambda+\Lambda') \frac{1-s^{s/2}}{s/2} \, ,
\end{equation*}
and lastly
\begin{align*}
    I_3 & = \int_{1/s}^\infty \big| H_M(\varphi(x), \varphi(y),t)-H_{M'}(x,y,t) \big| \frac{dt}{t^{1+s/2}} \\ & \le  s^{s/2} \int_{1}^\infty \bigg[ H_M(\varphi(x), \varphi(y), \xi/s)+H_{M'}(x,y,\xi/s) \bigg] \frac{d\xi}{\xi^{1+s/2}} = o_s(1) \to 0
\end{align*}
as $s\to 0^+$, since both $M$ and $M'$ have infinite volume, and thus, their heat kernel tends to zero as $t\to + \infty$ (see Lemma \ref{gryg}). Hence as $s\to 0^+$
\begin{equation*}
    \big| \mathcal{K}_s(\varphi(x), \varphi(y))  - \mathcal{K}_s'(x,y) \big| \le Cs+C(\Lambda+\Lambda')(1-s^{s/2}) +o_s(1) = o_s(1) \,,
\end{equation*}
and note that this estimate is uniform in $x,y \in \B_{1/5}(0)$. This follows, for example, from the parabolic Harnack inequality since one can locally estimate the supremum of $H_M$ and $H_{M'}$ with the $L^1$ norm at later times; see the end of the proof of Lemma \ref{gryg}. Then 
 \begin{equation*}
        \lim_{s\to 0^+} \sup_{x,y \in B_{1/8}(p) } \left| \mathcal{K}_s(x,y) - \mathcal{K}_s'(x,y) \right|  =0 \,.
    \end{equation*}
Lastly, by \cite[Lemma 2.5]{CFS23} there exists dimensional constants $c, C >0$ such that 
\begin{equation*}
   c \frac{\beta_{n,s}}{d(x,y)^{n+s}} \le \mathcal{K}_s'(x,y) \le  C \frac{\beta_{n,s}}{d(x,y)^{n+s}} \,,
\end{equation*}
and this concludes the proof.
\end{proof}

\subsection{On the equivalence and well-posedness of different fractional Laplacians.}\label{flapsection}

In this subsection we shall prove some results concerning the equivalence between different definitions of the fractional Laplacian, and the fractional Sobolev seminorms on (possibly weighted) Riemannian manifolds. 

\vsp
Next we want to show that the fractional laplacian defined with the heat semigroup $(-\Delta)^{s/2}_{\rm B} $ and the one defined via the singular integral $(-\Delta)^{s/2}_{\rm Si} $ coincide. Note that the two following propositions do not hold when $M$ is not stochastically complete. Indeed, using definition \eqref{flap} gives $(-\Delta)^{s/2}_{\rm Si} (1)  \equiv 0$, while if $M$ is not stochastically complete equation \eqref{boclap} gives $(-\Delta)^{s/2}_{\rm B}(1) \neq 0$.

    \begin{proposition}\label{coincideprop3}
        Let $(M,g)$ be a complete, stochastically complete Riemannian manifold, and let $u\in C^\infty_c(M)$. Then:
        \begin{itemize}
            \item[(i)] For $s<1$ the integral in $(-\Delta)^{s/2}_{\rm Si} u$ is absolutely convergent and the principal value is not needed. 
            \item[(ii)] The singular integral $(-\Delta)^{s/2}_{\rm Si} u$ (defined in \eqref{flap}) and the Bochner $(-\Delta)^{s/2}_{\rm B} u$ (defined in \eqref{boclap}) definition coincide.
        \end{itemize}
    \end{proposition}
\begin{proof} For what concerns the absolute convergence for $s\in (0,1)$, we have
   \begin{align*}
       \int_M(u(x)-u(y))\mathcal{K}_s(x,y)d\mu(y) &=   \int_{B_r(x)} (\dotsc) \, d\mu(y) +   \int_{M\setminus B_r(x)}(\dotsc) \, d\mu(y)  =: I_1+I_2.
   \end{align*}
   For $r$ small, arguing exactly as in the proof of Theorem \ref{globlimnoncompact}
   \begin{equation*}
       I_1\leq C\int_{B_r(x)}\frac{1}{d(x,y)^{n+s-1}}d\mu(y)\leq C\int_0^{r}\frac{1}{\rho^{s}}d\rho<+\infty \,.
   \end{equation*}
   On the other hand, for the second integral
   \begin{equation*}
       I_2\leq 2\|u\|_{L^\infty} \int_{M\setminus B_r(x)}\mathcal{K}_s(x,y)d\mu(y) \,,
   \end{equation*}
   and thanks to Lemma \ref{convergence} and Fubini
   \begin{align*}
       \int_{M\setminus B_r(x)}\mathcal{K}_s(x,y)d\mu(y) &= \int_0^\infty\frac{1}{t^{1+s/2}}\int_{M\setminus B_r(x)}H_M(x,y,t)d\mu(y)dt \\ & \leq C \int_0^1e^{-c/t}\frac{dt}{t^{1+s/2}}+\int_1^\infty\frac{1}{t^{1+s/2}}dt<+\infty.
   \end{align*}
   This concludes the proof of $(i)$.

   \vsp
   Now, let us define
    \begin{equation*}
        \mathfrak{J}(t) :=\frac{e^{t\Delta}u(x)-u(x)}{t^{1+s/2}}=\frac{1}{t^{1+s/2}}\int_MH_M(x,y,t)(u(y)-u(x))d\mu(y),
    \end{equation*}
    where the second equality is due to the stochastical completeness. Note that $ \mathfrak{J} \in L^1(0, +\infty)$ since $|e^{t\Delta}u(x)-u(x)|\leq Ct$, where the constant $C$ depends on $\|\Delta u\|_{L^\infty}$. We can now define
    \begin{equation*}
          \mathfrak{J}_k(t) :=\frac{1}{t^{1+s/2}}\int_{M\setminus B_{1/k}(x)}H_M(x,y,t)(u(y)-u(x))d\mu(y).
    \end{equation*}
and observe that $ \mathfrak{J}_k(t) \to  \mathfrak{J}(t)$ for all $t\in(0,\infty)$. Now if $t\geq 1$ (estimating the mass of the heat kernel by $1$) we get $\mathfrak{J}_k(t)\leq 2\|u\|_{L^\infty}/t^{1+s/2}$, while by \cite[Lemma 2.11]{CFS23} we have 
\begin{align*}
        \big|\mathfrak{J}(t)-\mathfrak{J}_k(t) \big| & \le  \frac{1}{t^{1+s/2}}\int_{B_{1/k}(x)}H_M(x,y,t)|u(y)-u(x)|d\mu(y) \\ & \le \frac{C}{t^{1+s/2+n/2}}\int_{B_{1/k}(x)}e^{-d^2(x,y)/5t}d(x,y)d\mu(y) \,.
    \end{align*}
Applying Coarea formula and using the fact that ${\rm Per}(B_r(x))\leq C r^{n-1}$ if $k$ is big we get 
    \begin{equation*}
         \big| \mathfrak{J}(t)-\mathfrak{J}_k(t) \big| \leq\frac{C}{t^{1+s/2+n/s}}\int_0^{1/k}e^{-r^2/5t}r^ndr = \frac{C}{t^{s/2}}\int_0^{1/(5tk^2)}e^{-z}z^{n/2-1}dz\leq\frac{C}{t^{s/2}}.
    \end{equation*}
Therefore if $t\geq 1$ we have $ \mathfrak{J}_{k}(t)\leq C/t^{1+s/2} \in L^1(1, +\infty)$ while if $t\leq 1$ we have $\mathfrak{J}_{k}(t)\leq C/t^{s/2}+\mathfrak{J}(t) \in L^1(0,1)$. Hence by dominated convergence we can write
\begin{align*}
    (-\Delta)^{s/2}_{\rm B} u (x)  =\int_0^\infty \mathfrak{J}(t) \, dt = \lim_{k \to \infty} \int_{0}^\infty \int_{M\setminus B_{1/k}(x)} (u(y)-u(x))H_M(x,y,t) \frac{dt}{t^{1+s/2}} \, d\mu(y).
\end{align*}
Now for any $k\in\mathbb{N}$ fixed, by Lemma \eqref{convergence} and the fact that $u$ is bounded, we get  
\begin{equation*}
    \int_0^\infty\int_{M\setminus B_{1/k} (x)} |u(y)-u(x)|H_M(x,y,t) \frac{dt}{t^{1+s/2}}\leq 2\|u\|_{L^\infty} \int_0^1 e^{-c/t} \frac{dt}{t^{1+s/2}}+ 2\|u\|_{L^\infty}\int_1^\infty \frac{dt}{t^{1+s/2}}<+\infty.
\end{equation*}
Therefore we can apply Fubini and infer 
\begin{align*}
     (-\Delta)^{s/2}_{\rm B} u (x) & = \lim_{k \to \infty} \int_{M\setminus B_{1/k}(x)} \int_{0}^\infty (u(y)-u(x))H_M(x,y,t) \frac{dt}{t^{1+s/2}} \, d\mu(y) \\ & ={\rm P.V.}\int_M(u(y)-u(x))\mathcal{K}_s(x,y)d\mu(y).
\end{align*}
\end{proof}
\begin{remark}
   One can note that the proof above of the absolute convergence of $(-\Delta)^{s/2}_{\rm Si} u $ for $s\in (0,1)$ actually shows that the integral is absolutely convergent if $u \in C^{\alpha}_{\rm loc}(M) \cap L^\infty(M)$ for some $\alpha >s$.  
\end{remark}

Regarding the following two results, we couldn't find any proof in the case of an ambient Riemannian manifold $(M,g)$, even though they appear to be well-known in the community in the case $M=\mathbb{R}^n$ or a domain $M=\Omega \subset \R^n$. For example, a proof that ${\rm Dom}((-\Delta_{\Omega})^{s/2}_{\rm Spec}) = H^{s}(\Omega)$ for the Dirichlet Laplacian on $\Omega \subset \R^n$ can be found in \cite[Section 3.1.3]{BonforteetAl}, but it heavily uses the discreteness of the spectrum and interpolation theory. 

\vsp
Our results are not sharp, in particular, we believe that Proposition \ref{coincideprop2} and \ref{coincideprop4} hold also for $s=\sigma$ since this is the case for domains in $\R^n$. Here we focus on providing structural (and short) proofs that apply verbatim to the case of any weighted manifold, and we avoid using any local Euclidean-like structure of $M$. 

\begin{proposition}\label{coincideprop2}
    Let $(M,g)$ be a stochastically complete Riemannian manifold, $\sigma \in (0,1)$ and $u \in H^\sigma(M) $ (as defined in Definition \ref{fracsobdef}). Then, for every $s <\sigma$ the singular integral $(-\Delta)^{s/2}_{\rm Si} u$ (defined in \eqref{flap}) and the Bochner $(-\Delta)^{s/2}_{\rm B} u$ (defined in \eqref{boclap}) definition coincide a.e. Moreover $(-\Delta)^{s/2}_{\rm B} u = (-\Delta)^{s/2}_{\rm Si} u \in L^2(M)$.  
    \end{proposition}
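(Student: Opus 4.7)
The plan is to reduce everything to spectral calculus, using stochastical completeness as the bridge between the singular-integral formulation and the spectral one, and then to identify the Bochner fractional Laplacian with the singular-integral one by duality against $C_c^\infty(M)$.

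First I would establish the spectral representation: for every $u \in L^2(M)$ and every $s \in (0,1)$,
\[
[u]_{H^{s/2}(M)}^2 = 2\int_{\sigma(-\Delta)} \lambda^{s/2}\,d\langle E_\lambda u,u\rangle.
\]
Stochastical completeness yields
\[
\iint_{M\times M}(u(x)-u(y))^2 H_M(x,y,t)\,d\mu(x)d\mu(y) = 2\langle (I-e^{t\Delta})u,u\rangle;
\]
multiplying by $t^{-1-s/2}/|\Gamma(-s/2)|$, integrating in $t$ (Fubini is legitimate, the integrand being non-negative) and applying the spectral theorem together with the standard identity $\int_0^\infty(1-e^{-t\lambda})t^{-1-s/2}\,dt = |\Gamma(-s/2)|\lambda^{s/2}$ gives the claim. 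In particular $u\in H^\sigma(M)$ is equivalent to $\int\lambda^\sigma\,d\langle E_\lambda u,u\rangle<\infty$. Applying the same spectral calculus directly to the Bochner expression shows that $(-\Delta)^{s/2}_{\rm B}u=\int\lambda^{s/2}\,dE_\lambda u=(-\Delta)^{s/2}_{\rm Spec}u$ as $L^2$ elements, and since $s<\sigma$ implies $\lambda^s\le 1+\lambda^\sigma$ we conclude
\[
\|(-\Delta)^{s/2}_{\rm B}u\|_{L^2(M)}^2 = \int\lambda^s\,d\langle E_\lambda u,u\rangle \le \|u\|_{L^2(M)}^2 + \tfrac{1}{2}[u]_{H^\sigma(M)}^2 <+\infty,
\]
proving the $L^2$ claim.

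The main obstacle is the a.e.\ identification $(-\Delta)^{s/2}_{\rm B}u=(-\Delta)^{s/2}_{\rm Si}u$. I would proceed by duality: for any $v\in C_c^\infty(M)$, self-adjointness of the heat semigroup combined with Proposition \ref{coincideprop3} gives
\[
\int v\,(-\Delta)^{s/2}_{\rm B}u\,d\mu = \int u\,(-\Delta)^{s/2}_{\rm B}v\,d\mu = \int u\,(-\Delta)^{s/2}_{\rm Si}v\,d\mu,
\]
and since the last singular integral is absolutely convergent (again Proposition \ref{coincideprop3}), Fubini and the symmetry $\mathcal{K}_s(x,y)=\mathcal{K}_s(y,x)$ rewrite this as
\[
\int v\,(-\Delta)^{s/2}_{\rm B}u\,d\mu = \tfrac{1}{2}\iint (u(x)-u(y))(v(x)-v(y))\mathcal{K}_s(x,y)\,d\mu(x)d\mu(y),
\]
a quantity absolutely convergent by Cauchy--Schwarz since $u\in H^{s/2}(M)$ by the spectral identity above (as $s/2<\sigma$). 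To pass from this bilinear identity to a pointwise a.e.\ statement, one must show that for a.e.\ $x\in M$ the P.V.\ integral defining $(-\Delta)^{s/2}_{\rm Si}u(x)$ exists: the tail $\int_{M\setminus B_r(x)}|u(x)-u(y)|\mathcal{K}_s(x,y)\,d\mu(y)$ is finite for a.e.\ $x$ and every $r>0$ by Cauchy--Schwarz together with the finiteness of $\int_{M\setminus B_r(x)}\mathcal{K}_s(x,y)\,d\mu(y)$ (a consequence of Lemma \ref{convergence} and stochastical completeness), while the singularity inside $B_r(x)$ is handled by the antisymmetric cancellation, exploiting that $\int_{B_r(x)}(u(x)-u(y))^2\mathcal{K}_s(x,y)\,d\mu(y)$ is finite for a.e.\ $x$. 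Once the P.V.\ is shown to exist a.e., the identification with $(-\Delta)^{s/2}_{\rm B}u$ follows from the bilinear identity above together with density of $C_c^\infty(M)$ in $L^2(M)$.
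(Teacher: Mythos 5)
Your spectral computation and duality setup are correct (and take a genuinely different route from the paper): stochastic completeness does give $\iint (u(x)-u(y))^2 H_M(x,y,t)\,d\mu\,d\mu = 2\langle(I-e^{t\Delta})u,u\rangle$, Tonelli in $t$ is legitimate by non-negativity, the identity $\int_0^\infty(1-e^{-t\lambda})t^{-1-s/2}\,dt=|\Gamma(-s/2)|\lambda^{s/2}$ gives the spectral representation, and $\lambda^s\le 1+\lambda^\sigma$ yields the $L^2$ bound on the Bochner operator. The duality identity against $v\in C_c^\infty(M)$ (via self-adjointness of $e^{t\Delta}$ and Proposition~\ref{coincideprop3}) is also sound.

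The gap is in the very last step, where you try to upgrade the bilinear identity to the a.e.\ pointwise one. You assert that the inner singularity of $\int_{B_r(x)}(u(x)-u(y))\mathcal{K}_s(x,y)\,d\mu(y)$ is ``handled by the antisymmetric cancellation,'' citing only the finiteness of $\int_{B_r(x)}(u(x)-u(y))^2\mathcal{K}_s(x,y)\,d\mu(y)$. This does not work: a cancellation argument requires pointwise regularity of $u$ (say $C^{1}$ near $x$, plus an even/odd decomposition of the kernel in normal coordinates) which a general $u\in H^\sigma(M)$ with $\sigma<1$ does not possess, and finiteness of the quadratic form gives no control on the one-sided principal-value limit. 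In fact the correct statement is stronger: no principal value is needed at all, because the singular integral converges \emph{absolutely} a.e., and this is precisely what the paper proves. Its proof shows directly that
\[
\int_M\!\Bigl(\int_M |u(x)-u(y)|\,\mathcal{K}_s(x,y)\,d\mu(y)\Bigr)^{\!2}d\mu(x)<+\infty
\]
by writing the inner integral via the heat kernel as $\int_0^\infty \mathfrak{I}(t)\,t^{-1-s/2}\,dt$ with $\mathfrak{I}(t)=\int_M|u(x)-u(y)|H_M(x,y,t)\,d\mu(y)$, splitting at $t=1$, using H\"older in $t$ (exploiting $s<\sigma$) together with Jensen's inequality (using that $H_M(x,\cdot,t)\,d\mu$ is a sub-probability measure) to bound the small-time part by $[u]^2_{H^\sigma(M)}$ and the large-time part by $\|u\|_{L^2(M)}^2$. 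This single estimate simultaneously shows the singular integral is absolutely convergent a.e., gives $(-\Delta)^{s/2}_{\rm Si}u\in L^2(M)$, and justifies the Fubini exchange identifying it with $(-\Delta)^{s/2}_{\rm B}u$ — no reference to $C_c^\infty$ test functions, the spectral theorem, or principal values is required. If you want to keep your duality structure, you would still need to establish this absolute-convergence estimate (or something equivalent) to make the last step rigorous; the cancellation heuristic must be replaced.
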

    \begin{proof}
        Let $u \in H^\sigma(M)$ and $x\in M$. Since $M$ is stochastically complete, if we could exchange the order of integration we would have
        \begin{align*}
             (-\Delta)^{s/2}_{\rm B} u (x)&= \frac{1}{\Gamma(-s/2)} \int_{0}^{\infty} (e^{t\Delta}u (x)-u(x))\frac{dt}{t^{1+s/2}} \\ & = \frac{1}{\Gamma(-s/2)} \int_{0}^{\infty} \left( \int_M H_M(x,y,t) (u(y)-u(x)) d\mu(y) \right)\frac{dt}{t^{1+s/2}} \\ &= \int_M (u(y)-u(x))\mathcal{K}_s(x,y) d\mu(y) = (-\Delta)_{\rm Si}^{s/2} u (x) \,.
        \end{align*}
        Now we shall justify the steps above, showing that the integral is absolutely convergent. Note that this will also justify the last equality, since we have defined $(-\Delta)_{\rm Si}^{s/2}$ with the Cauchy principal value. In particular, we show that 
\begin{equation*}
    \int_M \left( \int_M |u(x)-u(y)| \mathcal{K}_s(x,y) d\mu(y)\right)^2 d\mu(x) <+\infty \,.
\end{equation*}
This will prove at the same time that the integral above is absolutely convergent for a.e. $x\in M$ and that $(-\Delta)^{s/2}_{\rm Si} u \in L^2(M)$. 
Let us call
        \begin{equation*}
            \mathfrak{I}(t):=\int_{M} |u(x)-u(y)| H_M(x,y,t) \, d\mu(y) \, ,
        \end{equation*}
and denote by $C$ a constant that depends at most on $\sigma$. 

\vsp
Note that, by Jensen's inequality 
\begin{align}
           \int_0^\infty \mathfrak{I}(t)^2 \frac{dt}{t^{1+\sigma}} &= \int_0^\infty \left( \int_{M} |u(x)-u(y)| H_M(x,y,t) \, d\mu(y) \right)^2\frac{dt}{t^{1+\sigma}} \nonumber \\ &\le \int_0^\infty\int_M |u(x)-u(y)|^2 H_M(x,y,t) \, d\mu(y)\frac{dt}{t^{1+\sigma}} \nonumber \\ & = C \int_M |u(x)-u(y)|^2 \mathcal{K}_{2\sigma}(x,y) \, d\mu(y) \,. \label{asfgae}
 \end{align}
Write
\begin{align*}
      \int_M  \bigg( \int_M |u(x)-u(y)| & \mathcal{K}_s(x,y) d\mu(y)\bigg)^2 d\mu(x)  \\ & = Cs^2\int_M\left(\int_0^\infty \mathfrak{I}(t)\frac{dt}{t^{1+s/2}} \right)^2d\mu  \\ &  \le Cs^2 \int_M   \left( \int_0^1\mathfrak{I}(t)\frac{dt}{t^{1+s/2}}\right)^2d\mu + Cs^2\int_M \left( \int_1^\infty \mathfrak{I}(t)\frac{dt}{t^{1+s/2}} \right) ^2d\mu  \,.
\end{align*}
For the first integral, since $s<\sigma$, by H\"older's inequality and \eqref{asfgae} we have
\begin{equation*}
           \int_M \left( \int_0^1 \mathfrak{I}(t)\frac{dt}{t^{1+s/2}} \right)^2 d\mu \le \int_M \left(\int_0^1 \mathfrak{I}(t)^2 \frac{dt}{t^{1+\sigma}} \right) \left( \int_0^1 \frac{dt}{t^{1-\sigma+s}}\right) d\mu \le C [u]_{H^\sigma(M)}^2 <+\infty.
\end{equation*}
For the second integral, let us first renormalize the measure $\nu := Cdt/t^{1+s/2}$ in a way that it becomes a probability measure on $[1,\infty)$. Then, by Jensen again (applied two times: to $d\nu(t)$ and then $H_M(x,y,t)d\mu(y)$) 
\begin{align*}
    \int_M \left( \int_1^\infty \mathfrak{I}(t)\frac{dt}{t^{1+s/2}} \right)^2 d\mu & \leq \frac{C}{s^2}\iint_{M\times M}\int_1^\infty |u(x)-u(y)|^2 H_M(x,y,t) \, d\nu(t) d\mu(y)d\mu(x) \\
    &\leq \frac{4C}{s^2}\iint_{M\times M}\int_1^\infty |u(x)|^2 H_M(x,y,t) \, d\nu(t) d\mu(y)d\mu(x) \\ &\leq \frac{4C}{s^2}\|u\|_{L^2(M)}^2 <+\infty \,.
\end{align*}
Hence, we have proved
\begin{align}
    \| (-\Delta)^{s/2}_{\rm Si} u \|_{L^2(M)}^2 & \le \int_M \left( \int_M |u(x)-u(y)| \mathcal{K}_s(x,y) d\mu(y)\right)^2 d\mu(x) \nonumber \\ & \le C \|u\|_{L^2(M)}^2 +  Cs^2\| u \|_{H^\sigma(M)}^2 , \label{uqweriufgweur}
\end{align}
and this concludes the proof.
\end{proof}
\begin{remark}\label{rcdrmk}
    Note that the proof of Proposition \ref{coincideprop2} applies verbatim to the case of ${\rm RCD}(K,N)$ spaces, since every ${\rm RCD}(K,N)$ space is stochastically complete. We will use this fact in the proof of Theorem \ref{rcdlimit22}. 
\end{remark}

Next, we address the equivalence of the spectral fractional Laplacian $(-\Delta)^{s/2}_{\rm Spec}$ with the other definitions. We refer to \cite{GrygBook} and \cite[Section 2.6]{Spec1} and the references therein for an introduction of the spectral theory of the fractional Laplacian on general spaces.

     \vsp
    Let ${E_\lambda}$ be the spectral resolvent of (minus) the Laplacian on $(M,g)$. Then, for $s\in (0,2)$ in the classical sense of spectral theory 
\begin{equation*}
    {\rm Dom}((-\Delta)^{s/2}_{\rm Spec}) := \Big\{u \in L^2(M) \, : \, \int_{\sigma(-\Delta)} \lambda^{s} \, d\lp E_\lambda u, u \rp <+\infty \Big\} \,,
\end{equation*}
and for $u \in {\rm Dom}((-\Delta)^{s/2}_{\rm Spec})$ 
\begin{equation}\label{speclap}
    (-\Delta)^{s/2}_{\rm Spec} u := \int_{\sigma(-\Delta)} \lambda^{s/2} d \lp E_\lambda u , \cdot  \rp \,.
\end{equation}

\begin{proposition}\label{coincideprop4}
    Let $(M,g)$ be a stochastically complete Riemannian manifold, $\sigma \in (0,1)$ and $s<\sigma$. Then $ H^\sigma(M)  \subseteq {\rm Dom}((-\Delta)^{s/2}_{\rm Spec})$.  
\end{proposition}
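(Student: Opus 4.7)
The plan is to prove the sharper spectral identity
\begin{equation*}
[u]_{H^\sigma(M)}^2 = 2\int_{\sigma(-\Delta)} \lambda^\sigma \, d\langle E_\lambda u, u\rangle
\end{equation*}
for every $u \in L^2(M)$, where both sides are allowed to be $+\infty$ simultaneously. Granted this identity, the desired inclusion follows at once: if $u \in H^\sigma(M)$ then the right-hand side is finite, and the elementary bound $\lambda^s \le 1 + \lambda^\sigma$ on $[0,\infty)$ gives
\begin{equation*}
\int_{\sigma(-\Delta)} \lambda^s \, d\langle E_\lambda u, u\rangle \le \|u\|_{L^2(M)}^2 + \tfrac{1}{2}[u]_{H^\sigma(M)}^2 < +\infty
\end{equation*}
for every $s \le \sigma$, so that $u \in \mathrm{Dom}((-\Delta)^{s/2}_{\mathrm{Spec}})$ (in particular the strict case $s < \sigma$ of the statement).

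The identity is established by a sequence of Tonelli exchanges, all of which are licit because every integrand that appears is non-negative (so no a priori finiteness is required). First I would substitute the heat-kernel representation $\mathcal{K}_{2\sigma}(x,y) = |\Gamma(-\sigma)|^{-1}\int_0^\infty H_M(x,y,t)\,dt/t^{1+\sigma}$ into the definition of $[u]_{H^\sigma(M)}^2$ and pull the $t$-integration outside. Expanding the square and applying the stochastic completeness identity $\int_M H_M(x,y,t)\,d\mu = 1$ (twice, using symmetry of $H_M$) collapses the spatial double integral into
\begin{equation*}
\iint_{M\times M} (u(x)-u(y))^2 H_M(x,y,t)\,d\mu(x)\,d\mu(y) = 2\|u\|_{L^2(M)}^2 - 2\langle e^{t\Delta}u, u\rangle_{L^2(M)},
\end{equation*}
which is automatically bounded by $4\|u\|_{L^2(M)}^2$ and hence finite for each $t > 0$.

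Next I would rewrite this via the spectral calculus: since $\langle e^{t\Delta}u, u\rangle = \int_{\sigma(-\Delta)} e^{-\lambda t}\,d\langle E_\lambda u, u\rangle$ and $\|u\|_{L^2(M)}^2 = \int_{\sigma(-\Delta)} d\langle E_\lambda u, u\rangle$, the bracket above equals $2\int_{\sigma(-\Delta)}(1-e^{-\lambda t})\,d\langle E_\lambda u, u\rangle$. A second Tonelli swap (again justified by positivity) exchanges the $t$- and $\lambda$-integrations, and the proof reduces to the classical gamma identity
\begin{equation*}
\int_0^\infty (1 - e^{-\lambda t})\,\frac{dt}{t^{1+\sigma}} = |\Gamma(-\sigma)|\,\lambda^\sigma, \qquad \lambda \ge 0,\ \sigma \in (0,1),
\end{equation*}
which follows by the change of variables $u = \lambda t$ and a single integration by parts. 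The factors $|\Gamma(-\sigma)|$ cancel and the identity is obtained. There is no genuine obstacle here: the argument is purely structural, does not rely on any local Euclidean structure of $M$, and transfers verbatim to the setting of weighted (or $\mathrm{RCD}$) spaces, consistent with the rest of the paper.
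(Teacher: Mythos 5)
Your proof is correct, and it takes a genuinely different — and sharper — route than the paper's. The paper derives Proposition~\ref{coincideprop4} as a corollary of Proposition~\ref{coincideprop2}: it feeds the conclusion that $(-\Delta)^{s/2}_{\rm Si}u = (-\Delta)^{s/2}_{\rm B}u \in L^2(M)$ into the spectral identity $\int_0^\infty|\varphi(\lambda)|^2\,d\langle E_\lambda u,u\rangle = \|\varphi(-\Delta)u\|^2_{L^2}$ with $\varphi(\lambda)=\lambda^{s/2}$, so the finiteness ultimately rests on the H\"older/Jensen estimates in the proof of Proposition~\ref{coincideprop2}, which genuinely use $s<\sigma$ to make $\int_0^1 dt/t^{1-\sigma+s}$ converge. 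You instead prove the identity
\begin{equation*}
[u]_{H^\sigma(M)}^2 \;=\; 2\int_{\sigma(-\Delta)}\lambda^\sigma\,d\langle E_\lambda u,u\rangle \qquad\text{for every }u\in L^2(M),
\end{equation*}
valid in $[0,+\infty]$, using only stochastic completeness, the spectral calculus for $e^{t\Delta}$, two Tonelli exchanges (licit by non-negativity of the integrands and positivity of the spectral measure $d\langle E_\lambda u,u\rangle$), and the gamma identity $\int_0^\infty(1-e^{-\lambda t})\,t^{-1-\sigma}\,dt = |\Gamma(-\sigma)|\,\lambda^\sigma$. This is fully self-contained (no appeal to Proposition~\ref{coincideprop2}) and strictly stronger: it identifies $H^\sigma(M)$ with ${\rm Dom}((-\Delta)^{\sigma/2}_{\rm Spec})$ exactly, so the proposition holds with $s=\sigma$ as well — precisely the improvement the paper conjectures in the remark preceding Proposition~\ref{coincideprop2}. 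The only point worth writing out explicitly is the justification for splitting $\iint(u(x)-u(y))^2H_M(x,y,t)\,d\mu\,d\mu$ into its three terms: one should first note, via $(u(x)-u(y))^2\le 2u(x)^2+2u(y)^2$ and stochastic completeness, that the integral is at most $4\|u\|_{L^2}^2<+\infty$, after which the cross term is absolutely integrable and equals $\langle e^{t\Delta}u,u\rangle$; this is exactly the bound you already observe, just placed slightly out of logical order.
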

\begin{proof}
    Let $u \in H^\sigma(M)$, and let 
    \begin{equation*}
        \varphi(\lambda) := \lambda^{s/2} =  \frac{1}{\Gamma(-s/2)} \int_0^\infty
(e^{-\lambda t}-1)\frac{dt}{t^{1+s/2}} \,.
    \end{equation*}
    Since $u\in L^2(M)$, by standard spectral theory (see \cite{GrygBook} for example)
    \begin{align*}
       \int_0^\infty \lambda^s d\lp E_\lambda u, u \rp & = \int_0^\infty |\varphi(\lambda)|^2 d\lp E_\lambda u, u \rp = \|\varphi(-\Delta) u \|_{L^2(M)}^2 \\ &= \left\| \int_0^\infty (e^{t\Delta}u-u) \frac{dt}{t^{1+s/2}} \right\|_{L^2(M)}^2 = \| (-\Delta)^{s/2}_{\rm B} u \|_{L^2(M)}^2 =  \| (-\Delta)^{s/2}_{\rm Si} u \|_{L^2(M)}^2 <+\infty \,,
    \end{align*}
    where we have used that by Proposition \ref{coincideprop2} $(-\Delta)^{s/2}_{\rm B} u = (-\Delta)^{s/2}_{\rm Si} u \in L^2(M)$. 
\end{proof}

\begin{proposition}\label{coincideprop1}
    Let $u \in {\rm Dom}((-\Delta)_{\rm Spec}^{s/2}) $. Then 
    \begin{equation*}
       (-\Delta)_{\rm B}^{s/2} u :=  \frac{1}{\Gamma(-s/2)} \int_{0}^{\infty} (e^{t\Delta}u-u)\frac{dt}{t^{1+s/2}} = \int_{\sigma(-\Delta)} \lambda^{s/2} d \lp E_\lambda u , \cdot  \rp =: (-\Delta)_{\rm Spec}^{s/2} u \,,  
    \end{equation*}
    where the equality is in duality with ${\rm Dom}((-\Delta)_{\rm Spec}^{s/2})$. 
    
\end{proposition}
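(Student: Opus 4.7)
The plan is to verify the identity weakly by pairing both sides with an arbitrary $v \in {\rm Dom}((-\Delta)^{s/2}_{\rm Spec})$. The key analytic ingredient is the classical subordination formula
\[
\lambda^{s/2} = \frac{1}{\Gamma(-s/2)} \int_0^\infty (e^{-\lambda t} - 1) \frac{dt}{t^{1+s/2}}, \qquad \lambda \ge 0,
\]
(valid for $s\in(0,2)$, with both sides equal to zero at $\lambda=0$), obtained e.g.\ by the change of variables $\tau = \lambda t$ in the Gamma integral. Together with the identity
\[
\lp (e^{t\Delta}-I) u, v \rp_{L^2} = \int_{\sigma(-\Delta)} (e^{-\lambda t}-1)\, d\lp E_\lambda u, v\rp,
\]
which follows from standard spectral calculus (polarization reduces it to the functional calculus applied to $u=v$), the statement reduces to exchanging the order of integration in $t$ and in the spectral variable $\lambda$.

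First I would fix $v\in{\rm Dom}((-\Delta)^{s/2}_{\rm Spec})\subseteq L^2(M)$, so that by the two identities above
\[
\frac{1}{\Gamma(-s/2)}\int_0^\infty \lp (e^{t\Delta}-I)u, v\rp_{L^2} \frac{dt}{t^{1+s/2}} = \frac{1}{\Gamma(-s/2)}\int_0^\infty\!\!\int_{\sigma(-\Delta)} (e^{-\lambda t}-1)\, d\lp E_\lambda u, v\rp \frac{dt}{t^{1+s/2}}.
\]
Then I would apply Fubini to swap the two integrals and invoke the subordination formula inside the spectral integral, so that the right-hand side becomes $\int_{\sigma(-\Delta)} \lambda^{s/2} d\lp E_\lambda u, v\rp$, which is exactly the claim.

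The one nontrivial step is justifying the Fubini exchange, and this is the main (though routine) obstacle. The plan is to combine two estimates: the elementary pointwise bound
\[
\int_0^\infty |e^{-\lambda t}-1|\frac{dt}{t^{1+s/2}}\le C_s\, \lambda^{s/2},
\]
obtained by splitting at $t=1/\lambda$ and using $|e^{-\lambda t}-1|\le \min(1,\lambda t)$, together with the Cauchy--Schwarz inequality for the spectral measure
\[
\int_{\sigma(-\Delta)} \lambda^{s/2}\, |d\lp E_\lambda u, v\rp| \le \left(\int_{\sigma(-\Delta)} \lambda^{s}\, d\lp E_\lambda u, u\rp\right)^{1/2} \|v\|_{L^2} = \|(-\Delta)^{s/2}_{\rm Spec} u\|_{L^2} \|v\|_{L^2},
\]
which is finite precisely because $u \in {\rm Dom}((-\Delta)^{s/2}_{\rm Spec})$. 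Combining these gives absolute integrability of the double integrand against $dt\otimes |d\lp E_\lambda u, v\rp|$, legitimizing the swap and completing the proof.
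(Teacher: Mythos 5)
Your proof is correct and follows essentially the same route as the paper's: both use the subordination formula $\lambda^{s/2}=\frac{1}{\Gamma(-s/2)}\int_0^\infty(e^{-\lambda t}-1)\,dt/t^{1+s/2}$, pair with a test function, and exchange the spectral and time integrals via Fubini. The main difference is that you spell out the absolute-integrability estimate justifying Fubini (the elementary bound $\int_0^\infty|e^{-\lambda t}-1|\,t^{-1-s/2}dt\le C_s\lambda^{s/2}$ plus the Cauchy--Schwarz inequality for the spectral measure $d\lp E_\lambda u,v\rp$), whereas the paper simply invokes ``Fubini's theorem since $u,\psi\in{\rm Dom}((-\Delta)^{s/2}_{\rm Spec})$'' with a pointer to \cite{CafStinga}. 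Your Cauchy--Schwarz bound, controlling the total variation by $\|(-\Delta)^{s/2}_{\rm Spec}u\|_{L^2}\|v\|_{L^2}$, in fact shows the pairing extends to arbitrary $v\in L^2(M)$ rather than merely to $v$ in the form domain, which is a small but genuine sharpening; everything else is the same argument, made a bit more explicit.
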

\begin{proof}
    We follow \cite[Lemma 2.2]{CafStinga} which deals with the analogous proposition in the case of discrete spectrum in a domain $\Omega \subset \R^n$. Recall the numerical formula
    \begin{equation*}
        \lambda^{s/2} = \frac{1}{\Gamma(-s/2)} \int_0^\infty
(e^{-\lambda t}-1)\frac{dt}{t^{1+s/2}} \,,   
\end{equation*}
valid for $\lambda>0$, $0<s<2$. Let $\psi \in {\rm Dom}((-\Delta)_{\rm Spec}^{s/2})$, and write $\psi = \int_{\sigma(-\Delta)} dE_\lambda \lp \psi, \cdot \rp $. Then 
    \begin{align*}
        \int_{\sigma(-\Delta)} \lambda^{s/2} d \lp E_\lambda u , \psi \rp &= \frac{1}{\Gamma(-s/2)} \int_{\sigma(-\Delta)} \int_0^\infty (e^{-\lambda t}-1)\frac{dt}{t^{1+s/2}} d \lp E_\lambda u , \psi \rp \\ &= \frac{1}{\Gamma(-s/2)}  \int_0^\infty \left( \int_{\sigma(-\Delta)} (e^{-\lambda t}-1) d \lp E_\lambda u , \psi \rp \right)  \frac{dt}{t^{1+s/2}}  \\ &= \frac{1}{\Gamma(-s/2)}  \int_0^\infty \big(\lp e^{t \Delta } u , \psi\rp -\lp u, \psi \rp \big)   \frac{dt}{t^{1+s/2}} \,,
    \end{align*}
    where the second-last inequality follows by Fubini's theorem since $u,\psi \in {\rm Dom}((-\Delta)_{\rm Spec}^{s/2}) $. 
\end{proof}
\begin{corollary}\label{corolequivalence}
    Let $(M,g)$ be a stochastically complete Riemannian manifold,  $\sigma\in (0,1)$, $s<\sigma$ and $u \in H^\sigma(M)$. Then 
    \begin{equation*}
     \frac{1}{2} [u]_{H^{s/2}(M)}^2 =  \int_{M} u (-\Delta)^{s/2}_{\rm Si} u \, d\mu = \int_0^\infty \lambda^{s/2} d\lp E_\lambda u,u \rp \,.
\end{equation*}
\end{corollary}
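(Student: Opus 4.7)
The plan is to split the identity into two independent equalities and reduce each to one of the preceding propositions. For the first equality $\tfrac{1}{2}[u]^2_{H^{s/2}(M)} = \int_M u\,(-\Delta)^{s/2}_{\rm Si}u\,d\mu$, I would exploit the symmetry of $\mathcal{K}_s$ in its two arguments to write
\begin{equation*}
\iint_{M\times M}(u(x)-u(y))^2\mathcal{K}_s(x,y)\,d\mu(x)d\mu(y) = 2\iint_{M\times M} u(x)(u(x)-u(y))\mathcal{K}_s(x,y)\,d\mu(x)d\mu(y),
\end{equation*}
after which Fubini rewrites the right-hand side as $2\int_M u(x)\,(-\Delta)^{s/2}_{\rm Si}u(x)\,d\mu(x)$. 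The delicate point is that the inner integral defining $(-\Delta)^{s/2}_{\rm Si}u$ is only a principal value, so both the swap and the exchange of integrals must be justified by an absolute integrability statement.

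To do so, I would recycle the computation already carried out in the proof of Proposition \ref{coincideprop2}, which shows
\begin{equation*}
\int_M \Big( \int_M |u(x)-u(y)|\,\mathcal{K}_s(x,y)\,d\mu(y)\Big)^2 d\mu(x) \le C \|u\|^2_{H^\sigma(M)}<\infty.
\end{equation*}
Combined with Cauchy--Schwarz against $u\in L^2(M)$, this yields $\iint_{M\times M} |u(x)|\,|u(x)-u(y)|\,\mathcal{K}_s(x,y)\,d\mu(x)d\mu(y)<\infty$, which both legitimizes the Fubini step and makes the principal value superfluous; in particular the inner integral is absolutely convergent for a.e.~$x$, so the symmetrization above is a genuine pointwise identity modulo null sets.

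For the second equality $\int_M u\,(-\Delta)^{s/2}_{\rm Si}u\,d\mu = \int_0^\infty \lambda^{s/2}\,d\lp E_\lambda u,u\rp$, I would chain three results already at our disposal: Proposition \ref{coincideprop2} identifies $(-\Delta)^{s/2}_{\rm Si}u$ with $(-\Delta)^{s/2}_{\rm B}u$ as an element of $L^2(M)$; Proposition \ref{coincideprop4} places $u\in \mathrm{Dom}((-\Delta)^{s/2}_{\rm Spec})$; and Proposition \ref{coincideprop1}, applied with the admissible test function $\psi=u$, delivers precisely
\begin{equation*}
\int_M u\,(-\Delta)^{s/2}_{\rm B}u\,d\mu = \int_{\sigma(-\Delta)}\lambda^{s/2}\,d\lp E_\lambda u,u\rp.
\end{equation*}
Combining the two identifications concludes the proof.

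The main (and essentially only) obstacle is the absolute integrability underpinning the Fubini step in the first equality; once that is in hand, everything else is a bookkeeping application of the preceding propositions. Note that the strict inequality $s<\sigma$ enters only because Propositions \ref{coincideprop2} and \ref{coincideprop4} require it, and both the spectral representation of $(-\Delta)^{s/2}_{\rm B}u$ and the $L^2$ estimate of $\|(-\Delta)^{s/2}_{\rm Si}u\|_{L^2(M)}^2$ are used in their full strength to close the argument.
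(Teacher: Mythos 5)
Your proposal is correct and follows essentially the same route as the paper, which simply declares the corollary a ``direct consequence'' of Propositions \ref{coincideprop2}, \ref{coincideprop4} and \ref{coincideprop1}. The only addition you make is to spell out the symmetrization identity and the absolute-integrability justification of the Fubini step for the first equality --- the paper treats this identity as immediate (it is stated without proof in the introduction), and your observation that the $L^2$ bound from Proposition \ref{coincideprop2}, combined with Cauchy--Schwarz against $u\in L^2(M)$, provides exactly the needed integrability is the right way to make it rigorous.
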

\begin{proof}
   The first equality is \eqref{vbnvbn}, and the second equality is a direct consequence of Proposition \ref{coincideprop2}, Proposition \ref{coincideprop4} and Proposition \ref{coincideprop1}.
\end{proof}

\subsection{Manifolds with nonnegative Ricci curvature.}

We recall a theorem of Yau which gives a lower bound on the growth of the volume of geodesic balls under the nonnegative Ricci curvature assumption. Note that the same holds with the same proof on ${\rm CD}(K,N)$ spaces.
\begin{theorem} 
\label{Yau volume growth}
Let $(M,g)$ be a complete non-compact Riemannian manifold with $\Ric_M \ge 0 $. Then, there exists a constant $C=C(n)>0$ such that for every $x \in M$ and $\lambda>0$
\begin{equation*}
    V_x(r \lambda) \ge C r V_x(\lambda), \, \quad \forall \, r>1 \,.
\end{equation*}
\end{theorem}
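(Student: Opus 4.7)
The plan is a direct application of the Bishop--Gromov volume comparison theorem to a point lying far out on a ray emanating from $x$, combined with the observation that $B_x(\lambda)$ sits inside a thin spherical annulus around that point.

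First, since $(M,g)$ is complete and non-compact, by the Cohn--Vossen argument there exists a unit-speed minimizing geodesic ray $\gamma:[0,\infty)\to M$ with $\gamma(0)=x$. Fix $r>1$ and set $p_r:=\gamma(2r\lambda)$, so that $d(x,p_r)=2r\lambda$. By the triangle inequality, every $y\in B_x(\lambda)$ satisfies
\[
2r\lambda-\lambda\;\le\;d(p_r,y)\;\le\;2r\lambda+\lambda,
\]
so $B_x(\lambda)\subset B_{p_r}(2r\lambda+\lambda)\setminus B_{p_r}(2r\lambda-\lambda)$, and therefore
\[
V_x(\lambda)\;\le\;V_{p_r}(2r\lambda+\lambda)-V_{p_r}(2r\lambda-\lambda).
\]

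Next I invoke Bishop--Gromov at $p_r$: since $\Ric_M\ge 0$, the ratio $V_{p_r}(\rho)/\rho^n$ is non-increasing in $\rho$, so
\[
V_{p_r}(2r\lambda+\lambda)\;\le\;\left(\frac{2r+1}{2r-1}\right)^{\!n} V_{p_r}(2r\lambda-\lambda),
\]
which yields
\[
V_x(\lambda)\;\le\;\left[\left(\frac{2r+1}{2r-1}\right)^{\!n}-1\right] V_{p_r}(2r\lambda-\lambda).
\]
For $r\ge 1$ a simple expansion gives $(2r+1)^n/(2r-1)^n-1\le C(n)/r$, hence
\[
V_x(\lambda)\;\le\;\frac{C(n)}{r}\,V_{p_r}(2r\lambda-\lambda).
\]

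Finally, because $d(x,p_r)=2r\lambda$, the triangle inequality gives $B_{p_r}(2r\lambda-\lambda)\subset B_x(4r\lambda-\lambda)\subset B_x(4r\lambda)$, so
\[
V_x(\lambda)\;\le\;\frac{C(n)}{r}\,V_x(4r\lambda).
\]
Replacing $r$ by $r/4$ (valid for $r>4$) yields $V_x(r\lambda)\ge C'(n)\,r\,V_x(\lambda)$, while for $1<r\le 4$ the monotonicity $V_x(r\lambda)\ge V_x(\lambda)\ge (r/4)V_x(\lambda)$ gives the same inequality with a possibly smaller constant. Taking the minimum of the two constants concludes the proof.

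The only place that requires any genuine input is the existence of the ray, which uses completeness together with non-compactness; everything else is a clean Bishop--Gromov computation. I expect no real obstacle beyond bookkeeping of the constants $C(n)$.
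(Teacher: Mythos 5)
Your proof is correct. It is precisely the classical argument for Yau's linear volume growth: take a ray $\gamma$ from $x$ (completeness plus non-compactness), place $p_r=\gamma(2r\lambda)$ far out on that ray, note that $B_x(\lambda)$ sits inside the thin annulus $B_{p_r}(2r\lambda+\lambda)\setminus B_{p_r}(2r\lambda-\lambda)$, bound the annular volume by Bishop--Gromov applied at $p_r$, and then push the estimate back to $x$ using $B_{p_r}(2r\lambda-\lambda)\subset B_x(4r\lambda)$; the factor of $4$ is cleaned up at the end by substituting $r\mapsto r/4$ and treating $1<r\le 4$ by monotonicity. The paper itself does not prove the theorem --- it reduces to $\lambda=1$ by scale invariance and cites Li's lecture notes --- and your blind proof is exactly the argument appearing in that reference, so the two approaches coincide. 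All the intermediate steps check out: the triangle inequality gives the annulus containment, the elementary estimate $\bigl(\tfrac{2r+1}{2r-1}\bigr)^{n}-1\le C(n)/r$ for $r\ge 1$ is correct, and the final case split handles the rescaling cleanly.
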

\begin{proof}
    By scaling invariance of the hypothesis $\Ric_M\ge 0$ one can assume $\lambda=1$. Then, the result is \cite[Theorem 2.5]{Liga}. 
\end{proof}

Next, we present here a result concerning the growth of the singular kernel $\mathcal{K}_s$ in the case of nonnegative Ricci curvature. We will not use this result anywhere but we believe it can be interesting per se. For example, it implies that on cylinders $M=\Sp^{n-k} \times \R^{k}$ (with their product metric) the singular kernel $\mathcal{K}_{s}(x,y)$ decays like $1/d(x,y)^{k+s}$ and not $1/d(x,y)^{n+s}$ for large distances.

\begin{lemma}
    Let $(M,g)$ be an $n$-dimensional Riemannian manifold with ${\rm Ric}_M\geq 0$ and $s \in (0,2)$. Then, there exists dimensional constants $0<c<C$ such that 
    \begin{equation*}
        c \frac{s(2-s)}{r^{s}\mu(B_r(x))}\leq \mathcal{K}_s(x,y)\leq C \frac{s(2-s)}{r^{s}\mu(B_r(x))}
    \end{equation*}
    with $r=d(x,y)$ for all $x,y\in M$.
\end{lemma}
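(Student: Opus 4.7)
The strategy is to combine the two-sided Li--Yau Gaussian heat kernel estimates, which hold on manifolds with $\mathrm{Ric}_M \geq 0$,
\[
\frac{c_1}{\mu(B_{\sqrt{t}}(x))}\, e^{-d(x,y)^2/(c_2 t)} \;\leq\; H_M(x,y,t) \;\leq\; \frac{C_1}{\mu(B_{\sqrt{t}}(x))}\, e^{-d(x,y)^2/(C_2 t)},
\]
with Bishop--Gromov volume comparison and Yau's linear volume growth estimate (Theorem \ref{Yau volume growth}). Since $1/|\Gamma(-s/2)| = (s/2)/\Gamma(1-s/2)$ and $\Gamma(1-s/2) \sim 2/(2-s)$ near $s=2$, one readily checks that $1/|\Gamma(-s/2)|$ is comparable to $s(2-s)$ uniformly for $s \in (0,2)$. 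The claim therefore reduces to establishing, with constants independent of $s$,
\[
\int_0^\infty H_M(x,y,t) \frac{dt}{t^{1+s/2}} \,\text{ is comparable to }\, \frac{1}{r^s \mu(B_r(x))}.
\]

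For the upper bound I would split the time integral at $t = r^2$. On $(0, r^2)$, Bishop--Gromov gives $\mu(B_{\sqrt{t}}(x)) \geq (\sqrt{t}/r)^n \mu(B_r(x))$, so after pulling out the $r^n/\mu(B_r(x))$ factor, the substitution $u = r^2/(C_2 t)$ turns the remaining integral into $r^{-(n+s)}\int_{1/C_2}^\infty e^{-u} u^{(n+s)/2-1}\,du$, which is bounded uniformly in $s \in (0,2)$, yielding a contribution of order $1/(r^s \mu(B_r(x)))$. On $(r^2, \infty)$, the essential step is to invoke Yau's linear lower volume bound $\mu(B_{\sqrt{t}}(x)) \geq c(\sqrt{t}/r)\mu(B_r(x))$, giving $1/\mu(B_{\sqrt{t}}(x)) \leq Cr/(\sqrt{t}\,\mu(B_r(x)))$ and hence
\[
\int_{r^2}^\infty \frac{dt}{\mu(B_{\sqrt{t}}(x))\, t^{1+s/2}} \;\leq\; \frac{Cr}{\mu(B_r(x))} \int_{r^2}^\infty \frac{dt}{t^{3/2+s/2}} \;=\; \frac{2C}{(1+s)\, r^s \mu(B_r(x))},
\]
again of the correct order.

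For the lower bound I would simply restrict the integral to the window $t \in [r^2, 4r^2]$. There the Gaussian factor in the Li--Yau lower bound is bounded below by $e^{-1/c_2}$, and doubling gives $\mu(B_{\sqrt{t}}(x)) \leq 2^n \mu(B_r(x))$, so $H_M(x,y,t) \geq c/\mu(B_r(x))$ pointwise on this interval. The remaining time factor is computed explicitly:
\[
\int_{r^2}^{4r^2} \frac{dt}{t^{1+s/2}} \;=\; \frac{2(1 - 4^{-s/2})}{s\, r^s},
\]
and the function $s \mapsto (1 - 4^{-s/2})/s$ is continuous and strictly positive on $(0,2]$, with limit $\log 2$ as $s \to 0^+$, hence is bounded below by a universal constant. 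Multiplying by $1/|\Gamma(-s/2)|$ then yields the desired lower bound $\mathcal{K}_s(x,y) \geq c\, s(2-s)/(r^s \mu(B_r(x)))$.

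The only subtle point, and the source of the one genuine obstacle, is matching the correct $s$-dependence: in the regime $t \gg r^2$ the naive comparison $\mu(B_{\sqrt{t}}(x)) \geq \mu(B_r(x))$ produces an extra $1/s$ factor that would survive multiplication by $1/|\Gamma(-s/2)|$ and spoil the scaling. The use of Yau's linear growth estimate is exactly what removes this surplus factor, and dually the cancellation $(1-4^{-s/2})/s \to \log 2$ as $s \to 0^+$ is what makes the lower bound tight. Everything else is standard doubling combined with the $\Gamma$-like nature of the time integrals.
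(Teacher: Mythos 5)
Your proposal is correct and uses essentially the same approach as the paper: Li--Yau two-sided Gaussian bounds combined with Bishop--Gromov on $t \le r^2$ and Yau's linear volume growth on $t \ge r^2$, after the same split of the time integral. The one small difference is purely technical: for the lower bound the paper restricts the integral to $t\in(0,r^2)$ (where the lower Gaussian factor is bounded below after the rescaling $t=r^2k$), whereas you restrict to $t\in[r^2,4r^2]$; both windows give the required bound with the correct $s(2-s)$ dependence once the $\Gamma$-factor is taken into account.
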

\begin{proof}
    In the definition of the singular kernel $\mathcal{K}_s$ we first perform the change of variables $r^2t=k$ with $r=d(x,y)$ so that we obtain 
    \begin{equation*}
        \mathcal{K}_s(x,y)=\frac{r^{-s}}{|\Gamma(-s/2)|} \int_0^\infty H_M(x,y,r^2k)\frac{dk}{k^{1+s/2}}.
    \end{equation*}
    Now we employ the Gaussian estimates from above to get
    \begin{equation*}
        \mathcal{K}_s(x,y)\leq \frac{Cs(2-s)}{r^s}\biggl[\int_0^1\frac{1}{\mu(B_{r\sqrt{k}}(x))}e^{-1/5k}\frac{dk}{k^{1+s/2}}+\int_1^\infty\frac{1}{\mu(B_{r\sqrt{k}}(x))}e^{-1/5k}\frac{dk}{k^{1+s/2}}\biggr]=:I_1+I_2.
    \end{equation*}
    Using Bishop-Gromov's inequality we get 
    \begin{equation*}
        I_1\leq \frac{Cs(2-s)}{\mu(B_r(x))}\int_0^1 \frac{e^{-1/5k}}{k^{n/2+1+s/2}}dk\leq \frac{Cs(2-s)}{\mu(B_r(x))},
    \end{equation*}
    while for $k\in(1,\infty)$ we can use Theorem \ref{Yau volume growth} to write 
    \begin{equation*}
        I_2\leq \frac{Cs(2-s)}{\mu(B_r(x))}\int_1^\infty e^{-1/5k}\frac{dk}{k^{3/2+s/2}}\leq\frac{Cs(2-s)}{\mu(B_r(x))}
    \end{equation*}
    and this concludes the upper estimate. For the one from below we again use the Gaussian estimates to infer
    \begin{equation*}
         \mathcal{K}_s(x,y)\geq \frac{cs(2-s)}{r^s}\biggl[\int_0^1\frac{1}{\mu(B_{r\sqrt{k}}(x))}e^{-1/3k}\frac{dk}{k^{1+s/2}}+\int_1^\infty\frac{1}{\mu(B_{r\sqrt{k}}(x))}e^{-1/3k}\frac{dk}{k^{1+s/2}}\biggr]=:I_3+I_4.
    \end{equation*}
    We now get 
    \begin{equation*}
        I_3\geq \frac{cs(2-s)}{\mu(B_{r}(x))}\int_0^1 e^{-1/3k}\frac{dk}{k^{1+s/2}}=\frac{cs(2-s)}{\mu(B_{r}(x))}.
    \end{equation*}
    Since $I_4\geq 0$ we infer the lower bound as well.
\end{proof}

\begin{remark}
    If we assume ${\rm AVR}(M)=\lim_{r\to\infty}\frac{\mu(B_r(x))}{\omega_n r^n}=\theta>0$ then we have the more Euclidean-like bounds
    \begin{equation*}
       \frac{cs(2-s)}{\theta r^{n+s}} \leq \mathcal{K}_s(x,y)\leq  \frac{Cs(2-s)}{ \theta r^{n+s}}.
    \end{equation*}
    Note moreover that the same proof works in the singular setting of ${\rm RCD}(0,N)$ spaces.
\end{remark}

	 	 	\bibliography{references}
            \bibliographystyle{abbrv}
    % \printbibliography
	 	 	
\end{document}